\definecolor{darkblue}{rgb}{0,0,0.6}
\newcounter{commentcounter}
\let\oldtocsection=\tocsection
\let\oldtocsubsection=\tocsubsection
\let\oldtocsubsubsection=\tocsubsubsection
\renewcommand{\tocsection}[2]{\hspace{0em}\oldtocsection{#1}{#2}}
\renewcommand{\tocsubsection}[2]{\hspace{1em}\oldtocsubsection{#1}{#2}}
\renewcommand{\tocsubsubsection}[2]{\hspace{2em}\oldtocsubsubsection{#1}{#2}}
\DeclareFontFamily{T1}{cbgreek}{}
\DeclareFontShape{T1}{cbgreek}{m}{n}{<-6>  grmn0500 <6-7> grmn0600 <7-8> grmn0700 <8-9> grmn0800 <9-10> grmn0900 <10-12> grmn1000 <12-17> grmn1200 <17-> grmn1728}{}
\DeclareSymbolFont{quadratics}{T1}{cbgreek}{m}{n}
\DeclareMathSymbol{\qoppa}{\mathord}{quadratics}{19}
\DeclareMathSymbol{\Qoppa}{\mathord}{quadratics}{21}
\newtheorem{Thm}{Theorem}[section]
\newtheorem{Lemma}[Thm]{Lemma}
\newtheorem{Cor}[Thm]{Corollary}
\newtheorem{Prop}[Thm]{Proposition}
\theoremstyle{definition}
\newtheorem{definition}[Thm]{Definition}
\newtheorem{example}[Thm]{Example}
\newtheorem{Rmk}[Thm]{Remark}
\newtheorem{question}[Thm]{Question}
\newtheorem{problem}[Thm]{Problem}
\newtheorem*{ackn}{Acknowledgements}
\numberwithin{equation}{section}
\newtheorem*{intrormk}{Remark}
\theoremstyle{plain}
\newcounter{zaehler}
\newtheorem{introthm}[zaehler]{Theorem}
\newtheorem*{introcor*}{Corollary}
\newcommand{\KK}{\mathrm{KK}}
\newcommand{\adjt}{[\tfrac{1}{2}]}
\renewcommand{\k}{\mathrm{k}}
\newcommand{\K}{\mathrm{K}}
\renewcommand{\L}{\mathrm{L}}
\newcommand{\ko}{\mathrm{ko}}
\newcommand{\ku}{\mathrm{ku}}
\newcommand{\ksp}{\mathrm{ksp}}
\newcommand{\RAlg}{\mathrm{R^*Alg}}
\newcommand{\CAlg}{\mathrm{C^*Alg}}
\newcommand{\Q}{\mathbb{Q}}
\newcommand{\cwedge}{{\scriptscriptstyle\wedge}}
\newcommand{\lto}{\longrightarrow}
\newcommand{\alg}{\mathrm{alg}}
\renewcommand{\top}{\mathrm{top}}
\newcommand{\KU}{\mathrm{KU}}
\newcommand{\KSp}{\mathrm{KSp}}
\newcommand{\KO}{\mathrm{KO}}
\newcommand{\C}{\mathbb{C}}
\newcommand{\R}{\mathbb{R}}
\newcommand{\Z}{\mathbb{Z}}
\newcommand{\E}{\mathbb{E}}
\newcommand{\Gw}{\mathrm{GW}}
\newcommand{\GW}{\mathbb{G}\mathrm{W}}
\newcommand{\Sp}{\mathrm{Sp}}
\newcommand{\Spc}{\mathrm{Spc}}
\newcommand{\map}{\mathrm{map}}
\newcommand{\MSO}{\mathrm{MSO}}
\newcommand{\MSpin}{\mathrm{MSpin}}
\renewcommand{\H}{\mathbb{H}}
\newcommand{\Fun}{\mathrm{Fun}}
\newcommand{\Map}{\mathrm{Map}}
\newcommand{\cofib}{\mathrm{cofib}}
\newcommand{\coker}{\mathrm{coker}}
\newcommand{\op}{\mathrm{op}}
\newcommand{\Unimod}{\mathrm{Unimod}}
\newcommand{\Proj}{\mathrm{Proj}}
\newcommand{\pos}{\mathrm{pos}}
\renewcommand{\neg}{\mathrm{neg}}
\newcommand{\Alg}{\mathrm{Alg}}
\newcommand{\hyp}{\mathrm{hyp}}
\newcommand{\id}{\mathrm{id}}
\newcommand{\free}{\mathrm{free}}
\newcommand{\End}{\mathrm{End}}
\newcommand{\gl}{\mathrm{gl}}
\newcommand{\Ext}{\mathrm{Ext}}
\newcommand{\Hom}{\mathrm{Hom}}
\newcommand{\QF}{\Qoppa}
\newcommand{\s}{\mathrm{s}}
\renewcommand{\SS}{\mathbb{S}}
\newcommand{\Ab}{\mathrm{Ab}}
\newcommand{\sign}{\mathrm{sign}}
\newcommand{\Orb}{\mathrm{Orb}}
\newcommand{\cF}{\mathscr{F}}
\newcommand{\Fin}{\mathscr{F}\mathrm{in}}
\newcommand{\Vcyc}{\mathscr{V}\mathrm{cyc}}
\newcommand{\LL}{\mathbb{L}}
\newcommand{\q}{\mathrm{q}}
\newcommand{\vs}{\mathrm{vs}}
\newcommand{\vq}{\mathrm{vq}}
\newcommand{\Pro}{\mathrm{Pro}}
\newcommand{\cC}{\mathscr{C}}
\newcommand{\D}{\mathscr{D}}
\newcommand{\Ima}{\mathrm{Im}}
\newcommand{\sep}{\mathrm{sep}}
\newcommand{\Ringinv}{\mathrm{Ring}^\mathrm{inv}}
\newcommand{\Catp}{\mathrm{Cat}^\mathrm{p}_\infty}
\newcommand{\nun}{\mathrm{nu}}
\newcommand{\xto}{\xrightarrow}
\newcommand{\BC}{\mathrm{BC}}
\newcommand{\FJ}{\mathrm{FJ}}
\newcommand{\bspin}{\mathrm{bspin}}
\newcommand{\BSpin}{\mathrm{BSpin}}
\newcommand{\Spin}{\mathrm{Spin}}
\DeclareMathOperator{\Eq}{Eq}
\DeclareMathOperator*{\colim}{colim}
\DeclareMathOperator{\can}{can}
\title[L-theory of $C^*$-algebras]{L-theory of $C^*$-algebras}
\date{\today}
\thanks{ML was supported by the CRC/SFB 1085 \emph{Higher Invariants} at the University of Regensburg funded by the DFG and by the Danish National Research Foundation through the Copenhagen Centre for Geometry and Topology (DNRF151). TN was funded by the Deutsche Forschungsgemeinschaft (DFG, German Research Foundation) -- Project-ID 427320536 -- SFB 1442, as well as under Germany's Excellence Strategy EXC 2044 390685587, Mathematics M\"unster: Dynamics--Geometry--Structure.}
\author[M.~Land]{Markus Land}
\address{Mathematisches Institut, Ludwig-Maximilians-Universit\"at M\"unchen, Theresienstra\ss e 39, 80333 M\"unchen, Germany}
\email{markus.land@math.lmu.de}
\author[T.~Nikolaus]{Thomas Nikolaus}
\address{WWU M\"unster, Mathematisches Institut, Einsteinstr. 62, 48149 M\"unster, Germany}
\email{nikolaus@uni-muenster.de}
\author[M.~Schlichting]{Marco Schlichting}
\address{Mathematics Institute, Zeeman Building, University of Warwick, Coventry CV4 7AL, UK} 
\email{m.schlichting@warwick.ac.uk}
\begin{document}
\bibliographystyle{alpha}

\begin{abstract}
We establish a formula for the L-theory spectrum of real $C^*$-algebras from which we deduce a presentation of the L-groups in terms of the topological K-groups, extending all previously known results of this kind. Along the way, we extend the integral comparison map $\tau\colon \k \to \L$ obtained in previous work by the first two authors to real $C^*$-algebras and interpret it using topological Grothendieck--Witt theory. Finally, we use our results to give an integral comparison between the Baum--Connes conjecture and the L-theoretic Farrell--Jones conjecture, and discuss our comparison map $\tau$ in terms of the signature operator on oriented manifolds.
\end{abstract}

\maketitle

\tableofcontents

\section{Introduction}

This paper is concerned with certain invariants of \emph{real} $C^*$-algebras. 
A classical and powerful invariant of real $C^*$-algebras is topological K-theory. However, any $C^*$-algebra is a ring with involution and as such also has an associated (projective) algebraic L-theory. The relation between these two invariants has been an object of investigation for a long time and the purpose of this paper is to give a definitive treatment of this relation.
\newline
 
One of the prominent results in this direction is a theorem due to Karoubi, Miller, and Rosenberg \cite{Karoubi, Miller, Rosenberg2} which states that for complex $C^*$-algebras $A$, there are natural group isomorphisms
\begin{equation}\label{complex_iso}
 \K_n(A) \cong \L_n(A)
 \end{equation}
for all integers $n$. It is, however, well-known that the (topological) K-theory spectrum $\K(A)$ and the (algebraic) $\L$-theory spectrum $\L(A)$ are \emph{not} equivalent and that the isomorphism \eqref{complex_iso} does not hold true for real $C^*$-algebras $A$.
In previous work of the first two authors, the relation between the topological K- and L-spectra of complex $C^*$-algebras was studied \cite{LN}. Neglecting 2-torsion, or more precisely after inverting 2, this relation extends to real $C^*$-algebras and one can summarise the situation as follows: On complex $C^*$-algebras, there is a unique lax symmetric monoidal natural transformation $\tau\colon \k \to \L$ which induces an equivalence $\K\adjt \to \L\adjt$, and this latter equivalence extends in a compatible way to real $C^*$-algebras. Here, $\k$ denotes the connective topological $\K$-theory spectrum functor, i.e.\ the connective cover of $\K$. 
The map $\tau_A \colon \k(A) \to \L(A)$ induces an isomorphism on $\pi_0$ and $\pi_1$, so that by 2-periodicity of the two theories, one recovers the fact that all L-groups are isomorphic to the corresponding topological K-groups. However, under this isomorphism the map $\tau_\C \colon \k(\C) \to \L(\C)$ induces multiplication by 2 on $\pi_2$, so integrally, the periodicity in K-theory does not match up with the periodicity in L-theory. Explicitly left open in \cite{LN} was an integral comparison between K- and L-theory for real $C^*$-algebras, a gap which will be reconciled in this paper.
\newline

For the rest of this paper, $C^*$-algebras are now agreed to be real $C^*$-algebras, we will add the adjective complex when we need it. The purpose of this paper is to explain in full generality how to describe the L-theory of $C^*$-algebras in terms of their topological K-theory and in particular how to express the L-groups in terms of topological K-groups. We emphasize that L-theory refers to \emph{projective} L-theory. We will also discuss free L-theory of unital algebras in \cref{sec:5} but stick to the case of projective $\L$-theory for this introduction.
We note that both K- and L-theory of a complex $C^*$-algebra depend only on the underlying (real) $C^*$-algebra, so the case of complex $C^*$-algebras is treated implicitly. To follow standard notation in homotopy theory, we shall write $\KO$ and $\KU$ for $\K(\R)$ and $\K(\C)$, respectively, and likewise $\ko$ and $\ku$ for their connective covers $\k(\R)$ and $\k(\C)$, respectively.
The following are our main results. 

\begin{introthm}\label[Thm]{ThmA}
There is a unique lax symmetric monoidal transformation $\tau \colon \k \to \L$ and the induced map 
\[ \k(A) \otimes_\ko \L(\R) \lto \L(A) \]
is an equivalence of spectra for each $C^*$-algebra $A$.
\end{introthm}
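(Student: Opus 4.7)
The plan proceeds in three stages: construction of $\tau$, its uniqueness, and the verification that the induced $\ko$-linear map is an equivalence. For the construction, following the interpretation via topological Grothendieck--Witt theory indicated in the abstract, I would realise $\tau$ as a composite
\[ \k(A) \lto \tau_{\geq 0} \GW^\q(A) \lto \GW^\q(A) \lto \L(A) \]
for an appropriate Poincaré structure $\q$, extending the construction of \cite{LN} in the complex case to real $C^*$-algebras; all three maps are naturally lax symmetric monoidal in $A$.

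For uniqueness, I would argue that any lax symmetric monoidal transformation $\k \to \L$ between $\E_\infty$-valued functors on real $C^*$-algebras is determined by its value at the unit $\R$, i.e.\ by an $\E_\infty$-ring map $\tau_\R\colon \ko \to \L(\R)$: given any $A$, the unit map $\R \to A$ and the lax monoidal structure on $\tau$ force $\tau_A$ to be the composite
\[ \k(A) \simeq \k(A) \otimes_\ko \ko \lto \k(A) \otimes_\ko \L(\R) \lto \L(A). \]
Uniqueness of $\tau$ is then equivalent to contractibility of the space of $\E_\infty$-ring maps $\ko \to \L(\R)$, which I expect to follow from standard rigidity considerations for $\ko$.

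For the equivalence, the resulting $\ko$-linear comparison map $F(A) := \k(A) \otimes_\ko \L(\R) \to \L(A)$ is tautologically an equivalence at $A = \R$. For a general real $C^*$-algebra $A$, both functors satisfy Morita invariance, continuity under filtered colimits, and excision along short exact sequences of $C^*$-algebras, and after inverting $2$ the statement is essentially a real-algebra extension of \cite{LN}. It therefore remains to verify the equivalence $2$-locally. I would do this by bootstrapping from a small generating family---for instance matrix algebras over $\R$, $\C$, $\H$ and their real Clifford-algebra cousins---on which both sides are directly computable using Bott periodicity for $\ko$ and the known structure of $\L(\R)$ as a $\ko$-module.

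The hardest step is precisely this $2$-primary analysis. Away from $2$ the comparison is essentially that of \cite{LN}, but integrally the two sides differ in a controlled way---for example $\tau_\C\colon \ku \to \L(\C)$ is multiplication by $2$ on $\pi_2$---and the equivalence holds only because the $\ko$-module structure on $\L(A)$ is taken to be the one induced by $\tau$, rather than the more obvious $\L(\R)$-module structure. Pinning down $\L(\R)$ as a $\ko$-module via $\tau_\R$ and propagating the equivalence $2$-locally through the bootstrap is where I expect the main work to lie.
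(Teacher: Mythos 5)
Your proposal does not supply the key technical input that drives the paper's argument, namely the fact that $\L$ on $C^*$-algebras is a \emph{KK-invariant} functor. This is Corollary~3.3 (or the alternative route of Remark~3.5), and it is established first, before Theorem~A is even attempted. Once $\L$ descends to a functor $\KK \to \Sp$, both existence \emph{and} uniqueness of $\tau$ follow at once from a clean formal observation: $\k \simeq \map_\KK(\R,-)$ is the \emph{unit} of the Day convolution monoidal structure on $\Fun(\KK,\Sp)$, and $\L$ (being lax symmetric monoidal) is an algebra in that category, hence receives a unique algebra map from the unit. Your proposed uniqueness argument instead tries to reduce to contractibility of the space of $\E_\infty$-maps $\ko \to \L(\R)$ via ``the unit map $\R \to A$ and the lax monoidal structure.'' This reduction is not justified: the lax monoidal structure only gives comparison maps out of exterior tensor products, not out of the relative tensor product $\k(A)\otimes_\ko \L(\R)$, and the implicit claim that a lax symmetric monoidal transformation on the category of $C^*$-algebras is determined by its component at $\R$ is precisely what needs the Yoneda/Day-convolution setup on $\KK$. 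On $\CAlg$ itself, $\k$ is not corepresentable and the reduction is false as stated.

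Your strategy for the equivalence also has genuine problems. First, $\L$ is not excisive on arbitrary short exact sequences of $C^*$-algebras; Ranicki's formula measures exactly the failure, and the paper must work around this by passing through $\tau_{\geq 0}\Gw_\top$ (see the proof of \cref{Cor:GW_top}). Second, and more seriously, the family of matrix algebras over $\R$, $\C$, $\H$ and Clifford algebras only generates the \emph{bootstrap class} inside $\KK$, which is a proper thick subcategory; there are $C^*$-algebras not KK-equivalent to anything in it, so no bootstrap from these generators can reach general $A$. The paper's argument is entirely different and avoids any bootstrap: after identifying $\tau_{\geq 0}\Gw_\top \simeq \k\oplus\k$ (\cref{Cor:GW_top}) and showing that the $C_2$-action on $\k$ is trivial, one considers the cofibre sequence
\[
\k_{hC_2} \xrightarrow{\;\hyp\;} \tau_{\geq 0}\Gw_\top \lto \tau_{\geq 0}\L
\]
and uses the Yoneda embedding $\Fun(BC_2,\KK^\op) \hookrightarrow \Fun(BC_2,\Fun(\KK,\Sp))$ to show that $\hyp(A) \simeq \id_{\k(A)}\otimes_\ko \hyp(\R)$ for \emph{every} $A$. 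Taking cofibres then gives $\ell(A)\simeq \k(A)\otimes_\ko\ell(\R)$ in one stroke, with no dependence on $A$ lying in the bootstrap class. Your instinct to use topological Grothendieck--Witt theory is not wrong---the paper's remark after the proof even identifies $\tau$ a posteriori with the composite $\k \to \tau_{\geq 0}\Gw_\top \to \ell$---but as the primary construction and as the engine for the equivalence, the Day-convolution universal property plus the hyperbolic cofibre-sequence argument are both essential and absent from your plan.
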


\begin{introthm}\label[Thm]{ThmB}
Let $A$ be a $C^*$-algebra. There are natural isomorphisms of abelian groups
\begin{enumerate}
\item $\L_0(A) \cong \K_0(A)$, 
\item $\L_1(A) \cong  \coker(\K_0(A) \xrightarrow{\cdot \eta} \K_1(A))$
\item $\L_{2}(A) \cong \ker(\K_{6}(A) \xrightarrow{\cdot \eta} \K_{7}(A)  )$
\item $\L_3(A) \cong \K_7(A)$.
\end{enumerate}
Here $\eta$ is the non-trivial element in $\K_1(\R) = \pi_1(\KO)$.
\end{introthm}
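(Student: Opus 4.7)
By Theorem~A, $\L(A) \simeq \k(A) \otimes_{\ko} \L(\R)$, so Theorem~B reduces to computing $\pi_n$ of this tensor product. Part (1) is immediate: $\k(A)$ and $\L(\R)$ are connective $\ko$-module spectra with $\pi_0 \L(\R) = \Z$, so
\[
\L_0(A) = \pi_0\bigl(\k(A) \otimes_{\ko} \L(\R)\bigr) = \K_0(A) \otimes_\Z \Z = \K_0(A).
\]

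For (2)--(4), the key input is that $\tau_\R$ sends $\eta \in \pi_1 \ko$ to $0 \in \pi_1 \L(\R) = 0$, so $\eta$ acts trivially on $\L(\R)$. Smashing the Wood cofibre sequence $\Sigma \ko \xrightarrow{\eta} \ko \to \ku$ over $\ko$ with $\L(\R)$ then yields the splitting $\ku \otimes_{\ko} \L(\R) \simeq \L(\R) \oplus \Sigma^2 \L(\R)$; by Theorem~A the left side is $\L(\C)$, which has the homotopy groups of $\KU$ by the Karoubi--Miller--Rosenberg isomorphism. Smashing further with $\k(A)$ over $\ko$ produces $\L(A) \oplus \Sigma^2 \L(A) \simeq \k(A) \otimes_{\ko} \L(\C)$, and the homotopy groups of the right side are accessible via the Wood long exact sequence for $\k(A)$,
\[
\cdots \to \K_{n-1}(A) \xrightarrow{\eta} \K_n(A) \to \K^{\C}_n(A) \to \K_{n-2}(A) \xrightarrow{\eta} \K_{n-1}(A) \to \cdots,
\]
where $\K^{\C}_n(A) = \pi_n(\k(A) \otimes_{\ko} \ku)$ denotes the complex K-theory of the complexification.

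To separate the two summands, the plan is to use that the inclusion $\L(A) \hookrightarrow \L(A) \oplus \Sigma^2 \L(A)$ is induced by the complexification map $\k(A) \to \k(A) \otimes_{\ko} \ku$, so on $\pi_n$ its image lies in $\coker(\eta \colon \K_{n-1}(A) \to \K_n(A))$; the complement $\Sigma^2 \L(A)$ then corresponds to $\ker(\eta \colon \K_{n-2}(A) \to \K_{n-1}(A))$. Applying this for $n = 1, 2, 3$ and re-indexing via 4-periodicity of $\L$ together with 8-periodicity of $\K$ yields formulas (2), (3), and (4). The main obstacle is rigorously matching the splitting with this Wood kernel/cokernel decomposition via $\tau_A$, and verifying that the resulting identification is consistent with part (1)---in particular, that the $\eta$-multiplications on $\K_*(A)$ that arise interact with the splitting in the predicted manner.
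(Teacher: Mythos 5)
Your proposal has a genuine gap, and it is not merely the technical matching issue you flagged at the end: the step where you compute $\pi_n(\k(A)\otimes_\ko\L(\C))$ via the Wood long exact sequence for $\k(A)$ silently replaces $\pi_n(\k(A)\otimes_\ko\L(\C))$ by $\K^\C_n(A)=\pi_n(\k(A)\otimes_\ko\ku)$, and these are different groups. The comparison map between them is induced by $\tau_\C\colon \ku \to \L(\C) \simeq \ku\otimes_\ko\L(\R)$, which is precisely \emph{not} an equivalence: it sends $\beta$ to $2b_\C$ and so acts by multiplication by $2^n$ on $\pi_{2n}$ -- this is the whole integral discrepancy the paper is built around. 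The Karoubi--Miller--Rosenberg theorem only says that $\L_*(\C)$ and $\KU_*$ are abstractly isomorphic as graded groups; it does not say that $\L(\C)$ is equivalent to $\ku$ as a $\ku$-module, which is what the substitution requires. A concrete counterexample is $A=\H$: one has $\pi_0\bigl(\k(\H)\otimes_\ko\L(\C)\bigr) = \L_0(\H)\oplus\L_{-2}(\H) = \Z\oplus\Z/2$ (using $\L_{-2}(\H)=\L_2(\H)=\Z/2$ by $4$-periodicity), whereas $\pi_0\bigl(\k(\H)\otimes_\ko\ku\bigr)=\pi_0\ksp=\Z$ since $\pi_{-1}\ksp=\pi_{-2}\ksp=0$. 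Relatedly, the final read-off -- assigning $\L_n(A)\cong\coker(\eta)$ to the summand $\L(A)$ and $\L_{n-2}(A)\cong\ker(\eta)$ to $\Sigma^2\L(A)$ for \emph{every} $n$ -- is internally inconsistent: at $n=2$ it would give $\L_2(\H)=\coker\bigl(\K_1(\H)\xrightarrow{\eta}\K_2(\H)\bigr)=\coker(0\to 0)=0$, contradicting the value $\L_2(\H)=\Z/2$ obtained at $n=0$ from the other slot. Theorem B selects $\coker$ or $\ker$ depending on $n\bmod 4$, and that choice cannot be read off from an abstract direct sum decomposition of $\K^\C_n(A)$ alone.

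The paper's proof sidesteps exactly this factor-of-$2$ problem by working with $\ksp$ rather than $\ku$. The crucial input (\cref{Lemma:KSp-presentation} and \cref{Cor:3-truncation-of-L}) is a $\ko$-linear map $\ksp \to \ell(\R)$ that is an isomorphism on $\pi_0$, hence an equivalence on $3$-truncations. That supplies a genuine integral agreement, in the range of degrees that matters, of the kind that your proposed identification of $\ku$ with $\ell(\C)$ fails to provide. The computation then writes $\k(A)=\tau_{\geq 0}\K(A)$ for the $\KO$-module $\K(A)$ and compares $\tau_{\geq 0}M\otimes_\ko\ksp$ with $M\otimes_\KO\KSp=\Sigma^4M$, with a coconnective total fibre controlling the truncation error; that analysis is what produces the $\eta$-kernel in the formula for $\L_2$. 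Your observation that $\ku\otimes_\ko\L(\R)\simeq\L(\R)\oplus\Sigma^2\L(\R)$ is correct and natural, but it trades the original problem for the equally hard one of computing $\pi_*(\k(A)\otimes_\ko\L(\C))$ integrally in terms of topological $\K$-theory.
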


\begin{intrormk}
For a $C^*$-algebra $A$, there is the generalised Wood exact sequence
\[\cdots  \lto \K_{n-1}(A) \stackrel{ \eta}{\lto} \K_{n}(A) \stackrel{c}{\lto} \K_n(A_\C) \stackrel{u\beta_\C^{-1}}{\lto} \K_{n-2}(A) \stackrel{ \eta}{\lto} \K_{n-1}(A) \lto \cdots \]
Consequently, we also find canonical isomorphisms
\begin{enumerate} 
	\item $\L_1(A) \cong \ker\big( \K_{-1}(A_\C) \stackrel{u}{\to} \K_{-1}(A) \big)$, and
	\item $\L_2(A) \cong \coker\big( \K_0(A) \stackrel{c}{\to} \K_0(A_\C) \big)$.
\end{enumerate}
\end{intrormk}

By the $4$-fold periodicity of L-theory, \cref{ThmB} gives a natural description of all L-groups in terms of topological K-groups, see \cref{Thm:natural-ThmB} for a more canonical formulation. Under these isomorphisms we also describe the effect on homotopy groups of the map $\tau_A \colon \K_n(A) \to \L_n(A)$ from Theorem \ref{ThmA} for $n\geq 0$ (see \cref{Rem:induced-map}) as well as the exterior multiplication maps on the L-groups in terms of the exterior multiplication maps on the K-groups (see \cref{Rem:multiplication}).
In \cref{Sec:examples} we discuss a number of examples and calculate L-groups using \cref{ThmB}.

\begin{intrormk}
We note that \cref{ThmA} implies that two real $C^*$-algebras $A_0$ and $A_1$ whose K-theory spectra are equivalent as module spectra over $\ko$ have equivalent L-theory spectra. Likewise, \cref{ThmB} implies that if the K-groups of $A_0$ and $A_1$ are isomorphic as graded $\Z[\eta]$-modules, where $|\eta|=1$ and $\eta$ acts in the canonical way on the K-groups, then also the L-groups of $A_0$ and $A_1$ are isomorphic.
\end{intrormk}

Kasparov's KK-theory is a central tool for studying the K-theory of $C^*$-algebras. It therefore comes as no surprise that one would also like to study L-theory of $C^*$-algebras by KK-theoretic means. In the case of complex $C^*$-algebras, this was done in \cite{LN} but at the time of writing \cite{LN}, it was not known whether L-theory of $C^*$-algebras is KK-invariant in general. Even the fact that it is KK-invariant on complex $C^*$-algebras is a result which we still find quite surprising, since KK-theory is an intrinsically analytic theory, whereas L-theory depends only on the underlying algebraic structure of a $C^*$-algebra. Even more, $\L$-theory commutes with filtered colimits of involutive rings and thus only depends on the underlying algebraic structure of proper involutive subalgebras, which do not themselves need to be $C^*$-algebras.

It is an immediate consequence of \cref{ThmA} that L-theory is a KK-invariant functor on $C^*$-algebras. Our proof works the other way around though: instead of deducing KK-invariance from \cref{ThmA}, we use it as an input for the proof of \cref{ThmA}, and we give an argument for KK-invariance based on a description for 2-complete L-theory instead. Indeed, $\L\adjt$ was shown to be KK-invariant in \cite{LN} so it remains only to see that $\L(-)^\cwedge_2$ is KK-invariant. This is a direct consequence of the following result, which we derive from \cite[Theorem D.1]{KSW}.

\begin{introthm}\label[Thm]{ThmC}
For every Banach algebra with involution, the canonical map $\L(A) \to \k(A)^{tC_2}$ is a 2-adic equivalence.
\end{introthm}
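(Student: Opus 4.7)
The plan is to derive the claim directly from \cite[Theorem D.1]{KSW}, which provides a general comparison between L-theory and the Tate construction on K-theory for (Poincaré $\infty$-categories associated to) rings with involution.

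First, I would arrange the setup so that the K-theory appearing on the right agrees with the topological connective K-theory $\k(A)$. For a Banach algebra with involution $A$, this amounts to considering a topologically enhanced Poincaré $\infty$-category built from finitely generated projective $A$-modules (with their natural Banach topology) equipped with the symmetric hermitian structure induced by the involution, so that its underlying K-theory spectrum computes $\k(A)$. The canonical comparison map associated to such a Poincaré category identifies with the map $\L(A) \to \k(A)^{tC_2}$ in the statement, where one uses that L-theory is insensitive to the topological enrichment---it depends only on the underlying involutive ring, by filtered-colimit invariance combined with the algebraic nature of projective modules and their symmetric bilinear forms.

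Second, I would apply \cite[Theorem D.1]{KSW} to this Poincaré category. The cited theorem asserts, under hypotheses which one expects to be satisfied in this setting, that the natural map from L-theory to the Tate of K-theory is a 2-adic equivalence; combined with the identifications above this yields the claim. The main obstacle is the compatibility check in the first step: one must verify that the hypotheses of the KSW theorem apply to the chosen topological Poincaré model for a general Banach algebra with involution, and that the $C_2$-action on its K-theory matches the involution-induced action on $\k(A)$ underlying the formation of $\k(A)^{tC_2}$. Given the general Poincaré $\infty$-category framework developed in this paper, this should be largely formal, with the substantive mathematical content being the cited KSW result.
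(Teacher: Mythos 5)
Your proposal misstates what \cite[Theorem D.1]{KSW} actually proves, and as a result skips the bulk of the argument. That theorem is a \emph{homotopy limit} statement at the level of Grothendieck--Witt theory: it asserts that the comparison map $\GW_\top(A) \to \K(A)^{hC_2}$ from Bott-periodic topological Grothendieck--Witt theory to the $C_2$-homotopy fixed points of periodic topological K-theory is a $2$-adic equivalence. It is not, as you write, ``a general comparison between L-theory and the Tate construction on K-theory.'' Passing from the KSW statement to the assertion $\L(A) \simeq_{2} \k(A)^{tC_2}$ is precisely the content that needs to be supplied, and it is not formal.

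Concretely, the paper's route requires three further ingredients, none of which appear in your sketch. First, one invokes the homotopy invariance of algebraic L-theory for Banach algebras (\cref{Prop:homotopy-invariance-of-L}) to identify $\L(A)$ with its topologically-simplicial realization $\L_\top(A)$, allowing the realization of the pullback square
\[\begin{tikzcd}
\Gw(R) \ar[r] \ar[d] & \L(R) \ar[d]\\
\K_\alg(R)^{hC_2} \ar[r] & \K_\alg(R)^{tC_2}
\end{tikzcd}\]
(a consequence of the fundamental fibre sequence $\K_{hC_2}\to\Gw\to\L$) over the cosimplicial algebras $C^0(\Delta^n,A)$; this yields a pullback square with corners $\Gw_\top(A)$, $\L(A)$, $\k(A)^{hC_2}$, $\k(A)^{tC_2}$. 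This reduces the claim to showing that $\Gw_\top(A)\to\k(A)^{hC_2}$ is a $2$-adic equivalence. Second, to connect this connective statement with the periodic one appearing in KSW, one must show that the square relating $\Gw_\top(A)\to\GW_\top(A)$ and $\k(A)^{hC_2}\to\K(A)^{hC_2}$ is a pullback; this is argued by showing the horizontal fibres are coconnected while the vertical fibres are $4$-periodic (so the total fibre is both periodic and bounded above, hence trivial). Only after these two reductions can one cite \cite[Theorem D.1]{KSW}. Your sketch treats the entire deduction as a single black-box application, and the phrase ``the hypotheses of the KSW theorem apply\dots this should be largely formal'' papers over exactly the two non-formal steps above.
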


This is a topological version of Thomason's homotopy limit problem in hermitian algebraic $\K$-theory. This algebraic homotopy limit problem has been studied extensively, see e.g.\ \cite{HKO,BH,BKSO,CDHIII} for the case of fields, schemes over $\Z\adjt$, and Dedekind rings.

\subsection*{Assembly Maps}
As in \cite{LN}, a major motivation for studying the relation between K- and L-theory of $C^*$-algebras is to obtain a precise relationship between the Baum--Connes conjecture and the L-theoretic Farrell--Jones conjecture, inspired by the observation that both of these conjectures imply the Novikov conjecture. In \cite{LN} such a relation was understood after inverting 2 and we offer here the following integral refinement:

\begin{introthm}\label[Thm]{ThmD}
The map $\tau \colon \k \to \L$ induces a commutative diagram
\[\begin{tikzcd}
	\ko_*^G(\underline{E}G) \ar[rr,"\mathrm{BC}"] \ar[d,"\tau"] & & \k_*(C^*_rG) \ar[d,"\tau"] \\
	\L\R^G_*(\underline{\underline{E}}G) \ar[r,"\mathrm{FJ}"] & \L_*(\R G) \ar[r] & \L_*(C^*_r G)
\end{tikzcd}\]
where BC and FJ denote the Baum--Connes and Farrell--Jones assembly maps.
\end{introthm}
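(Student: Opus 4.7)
The strategy is to realise both BC and FJ as assembly maps in the Davis--L\"uck framework and to promote the natural transformation $\tau \colon \k \to \L$ of \cref{ThmA} to a natural transformation of the associated equivariant homology theories. Recall that in this setup, BC is the assembly map of the coefficient functor $G/H \mapsto \k(C^*_r H)$ on $\Orb(G)$ applied to $\underline{E}G \to *$, while the $\L$-theoretic FJ is the assembly of $G/H \mapsto \L(\R H)$ applied to $\underline{\underline{E}}G \to *$.

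The construction proceeds in three steps. First, since $\tau$ is natural in real $C^*$-algebras, the assignment $H \mapsto \tau_{C^*_r H}$ gives a natural transformation of coefficient functors on $\Orb(G)$ and hence a natural transformation $\ko^G_* \to \L(C^*_r)^G_*$ of equivariant homology theories, where $\L(C^*_r)^G_*$ denotes the equivariant theory built from $H \mapsto \L(C^*_r H)$. Second, the inclusion $\R H \hookrightarrow C^*_r H$ of rings with involution induces a natural transformation from the coefficient functor $H \mapsto \L(\R H)$ to $H \mapsto \L(C^*_r H)$; this is an isomorphism for finite $H$ since $\R H = C^*_r H$ in that case, and therefore induces an equivalence upon evaluation on $\underline{E}G$, which is a $\Fin$-CW-complex. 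Third, the inclusion of families $\Fin \subseteq \Vcyc$ provides a canonical $G$-map $\underline{E}G \to \underline{\underline{E}}G$ and hence a map $\L\R^G_*(\underline{E}G) \to \L\R^G_*(\underline{\underline{E}}G)$.

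Combining these, the left vertical map in the diagram is defined as the composite
\[ \ko^G_*(\underline{E}G) \xrightarrow{\tau} \L(C^*_r)^G_*(\underline{E}G) \xleftarrow{\cong} \L\R^G_*(\underline{E}G) \lto \L\R^G_*(\underline{\underline{E}}G). \]
Commutativity of the outer diagram then follows by a naturality chase: the $\tau$-square at the level of $\Fin$-based assembly commutes by naturality of $\tau$ applied to the coefficient functor; the change-of-family square commutes by naturality of assembly in the $G$-space; and the final composition to $\L_*(C^*_r G)$ is produced by applying $\L$ to the ring inclusion $\R G \hookrightarrow C^*_r G$, which is compatible with the above construction by another naturality argument.

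The main obstacle is the first step, namely promoting $\tau$ from a natural transformation of functors on real $C^*$-algebras to one of equivariant homology theories (equivalently, of spectrum-valued functors on the orbit category). Once this is achieved, the remainder is a formal chase using the functoriality of assembly in both its inputs. Some care may also be required to ensure that the concrete models used for the BC map (intrinsically KK-theoretic) and for the FJ map (intrinsically algebraic) are matched so that $\tau$ acts coherently across both sides.
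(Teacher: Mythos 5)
Your proposal is essentially correct and follows the same overall strategy as the paper's proof: realize both sides in the Davis--L\"uck framework, promote $\tau$ to a natural transformation of orbit-category functors, observe that group algebras and group $C^*$-algebras agree on finite subgroups, and then chase diagrams. Both you and the paper defer the crucial step---namely that $\tau\colon \k \to \L$ on $\KK$ actually induces a natural transformation of equivariant homology theories on the orbit category---to an external source (the paper cites \cite{LN} for this; you flag it as ``the main obstacle'' without giving details), so the two arguments are equally (in)complete on this key point.

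The one place where your route differs from the paper's, and where it is slightly less clean, is how the reduced group $C^*$-algebra enters. You assert that ``$H \mapsto \tau_{C^*_r H}$ gives a natural transformation of coefficient functors on $\Orb(G)$.'' But $H \mapsto C^*_r H$ is \emph{not} functorial on $\Orb(G)$: reduced group $C^*$-algebras are not functorial in group homomorphisms (e.g.\ $C^*_r \mathscr{F}_n$ is simple), which is precisely why the Davis--L\"uck setup for K-theory uses the \emph{full} group $C^*$-algebra. Your observation is still salvageable because you only ever evaluate on $\underline{E}G$, i.e.\ over $\Orb_{\Fin}(G)$, where $\R H = C^*_r H = C^* H$ for finite $H$; but the claim as stated should be restricted to $\Orb_{\Fin}(G)$. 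The paper avoids this subtlety entirely by first proving the diagram with the full group $C^*$-algebra $C^*G$ in the right column --- for which the coefficient functor $G/H \mapsto \k(C^*H)$ is genuinely well-defined on all of $\Orb(G)$ --- and then obtains the version with $C^*_r G$ by a single application of the naturality of $\tau$ under the canonical $*$-homomorphism $C^*G \to C^*_r G$. This two-step factorization is notably cleaner than trying to handle $C^*_r$ directly, and it tracks the fact that the Davis--L\"uck assembly map naturally targets $\K_*(C^*G)$, with the map to $\K_*(C^*_r G)$ being an afterthought.
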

After inverting the Bott element $\beta_\R$ and $2$, one recovers \cite[Theorem D]{LN}. In addition, the kernel and cokernels of the vertical maps can in principle be described using our identification of $\tau$ on homotopy groups. For the left hand vertical map this is most effective in the case where $G$ is torsion free as explained in Section \ref{Sec_seven}.
\newline

Before our work \cite{LN}, there have already been made several fruitful efforts to relate the surgery theoretic and the analytic approach to the Novikov conjecture, most notably the work of Higson and Roe \cite{HR1,HR2,HR3}. There, a central idea is to consider the signature operator $D_M$ of an oriented manifold $M$ as an appropriate K-theory class and use this to construct a comparison map from the surgery exact sequence to a 2-inverted exact sequence of topological K-groups. It has been known for a long time that the signature operator of an oriented manifold, unlike the spin Dirac operator, does not give rise to a map of spectra $\MSO \to \ko$, due to factors of 2 appearing for the signature operator on a boundary\footnote{Also, any map between those spectra induces the trivial map on homotopy groups as follows from the fact that $\MSO$ at primes 2 vanishes $K(1)$-locally.}, see \cite[Remark 4]{RW}. The following theorem expresses the fact that, with appropriate modifications, the signature operator does give rise to an $\E_\infty$ map $\MSO \to \ko\adjt$ and clarifies its relation with the Sullivan--Ranicki orientation; a version of this theorem discarding $\E_\infty$-structures was discussed in \cite{RW}.

\begin{introthm}
The association $M^{2n} \mapsto 2^{- \lfloor n/2 \rfloor} \cdot [D_M]$ refines uniquely to a  map of $\E_\infty$-ring spectra $\mathscr{L}_{AS} \colon \MSO \to \ko\adjt$. This map participates in the following commutative diagram of $\E_\infty$-rings
\[ \begin{tikzcd}
	\MSO \ar[r,"{\mathscr{L}_{AS}}"] \ar[d,"\sigma_\R"] & \ko\adjt \ar[d,"\tau"] \\
	\L(\R) \ar[r,"\mathrm{can}"] & \L(\R)\adjt
\end{tikzcd}\]
where $\sigma_\R$ is the Sullivan--Ranicki orientation.
\end{introthm}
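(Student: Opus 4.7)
The plan is to construct $\mathscr{L}_{AS}$ by leveraging the fact, proved in the main body of the paper, that the comparison map $\tau_\R \colon \ko \to \L(\R)$ is a map of $\E_\infty$-ring spectra which becomes an equivalence after inverting $2$. Given this, together with the Sullivan--Ranicki orientation $\sigma_\R \colon \MSO \to \L(\R)$, which is itself a map of $\E_\infty$-rings, I would define
\[\mathscr{L}_{AS} \;:=\; (\tau_\R\adjt)^{-1} \circ (\sigma_\R)\adjt \circ (\MSO \lto \MSO\adjt).\]
By construction this is a map of $\E_\infty$-ring spectra, and the square in the statement commutes tautologically, since $\tau \circ \mathscr{L}_{AS}$ equals $(\sigma_\R)\adjt \circ (\MSO \to \MSO\adjt)$, which is the composite $\MSO \xrightarrow{\sigma_\R} \L(\R) \xrightarrow{\can} \L(\R)\adjt$.

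For uniqueness, I would argue that $\ko\adjt$ is already $2$-inverted, so any $\E_\infty$-ring map out of $\MSO$ factors uniquely through $\MSO\adjt$. Commutativity of the square combined with the fact that $\tau_\R\adjt$ is an equivalence then pins down the factorisation to the one above: postcomposing any candidate $f \colon \MSO \to \ko\adjt$ with $\tau_\R\adjt$ must give $(\sigma_\R)\adjt$, and applying $(\tau_\R\adjt)^{-1}$ reproduces the displayed formula.

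The remaining and most delicate step is to verify that $\mathscr{L}_{AS}([M^{2n}]) = 2^{-\lfloor n/2\rfloor}[D_M]$ in $\pi_{2n}\ko\adjt$. I would proceed by unpacking the two ingredients separately: first, $\sigma_\R([M^{2n}])$ is the symmetric signature class, which in $\pi_{4k}\L(\R)$ equals $\sign(M)$ times the standard generator and is $2$-torsion otherwise (hence vanishes after inverting $2$); second, applying $(\tau_\R\adjt)^{-1}$ converts this back to $\pi_{4k}\ko\adjt$, and this conversion introduces a fixed $2$-power determined by the explicit formulae for $\tau_\R$ on homotopy groups recorded in \cref{ThmB} and \cref{Rem:induced-map}. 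A direct computation should show that the factor is exactly $2^{-\lfloor n/2\rfloor}$, matching the mismatch between the $8$-fold periodicity of $\ko$ and the $4$-fold periodicity of $\L(\R)$. The comparison with the signature operator is then the classical Atiyah--Singer identification of $[D_M] \in \pi_{2n}\KO$ as the K-theoretic realisation of the symmetric signature. The main obstacle will be the careful bookkeeping of the $2$-adic valuations relating $\tau_\R$-generators to $\ko$-generators and reconciling them with the prevailing normalisation of $[D_M]$ in the literature; once this is settled, the formula falls out.
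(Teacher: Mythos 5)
Your existence construction is exactly what the paper does: the authors also define $\mathcal{L}_{AS}$ as $(\tau_\R\adjt)^{-1}\circ \can \circ \sigma_\R$ (after observing $\MSO \to \ko\adjt$ factors through $\MSO\adjt$), which makes commutativity of the square tautological. So that part is fine and matches the paper.

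The gap is in your uniqueness argument. You prove uniqueness only among $\E_\infty$-maps $f\colon\MSO\to\ko\adjt$ that \emph{a priori} satisfy the commutative square, because you use the square to write $f = (\tau_\R\adjt)^{-1}\circ\tau_\R\adjt\circ f = (\tau_\R\adjt)^{-1}\circ\can\circ\sigma_\R$. But that is the trivial part of uniqueness. The theorem asserts that the map of multiplicative homology theories $M^{2n}\mapsto 2^{-\lfloor n/2\rfloor}[D_M]$ has a \emph{unique} $\E_\infty$-refinement, and a priori such a refinement need not sit in the square: you would have to already know that $\tau_\R\circ f$ and $\can\circ\sigma_\R$ agree as $\E_\infty$-maps, which is exactly as hard as the original uniqueness claim (two $\E_\infty$-maps agreeing on $\pi_*$ need not be homotopic). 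The genuine content, and the bulk of the paper's proof, is showing that the forgetful map
\[
\pi_0\Map_{\E_\infty}(\MSO,\ko\adjt)\lto\Hom_{\mathrm{Ring}}(\MSO_*,\ko\adjt_*)
\]
is injective. After reducing to $\MSpin\adjt$, this becomes an obstruction-theoretic problem about spectrum maps $\bspin\to\gl_1(\ko\adjt)$, handled with the Ando--Hopkins--Rezk framework, arithmetic fracture squares, the $K(1)$-local splitting of $\Sigma^\infty_+\BSpin\to\bspin$, and Anderson's description of $\KU\otimes B\Spin(n)$. None of this is addressed by your argument, and it cannot be replaced by the ``factor through $\MSO\adjt$'' observation.

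Once genuine uniqueness is in hand, your remaining step (identifying the effect on homology with the signature-operator class) also becomes lighter than what you sketch: rather than tracking $2$-adic valuations through $\sigma_\R$ and $\tau_\R$ by hand, the paper cites Rosenberg--Weinberger for the fact that $[M\to X]\mapsto 2^{-\lfloor n/2\rfloor}f_*[D_M]$ is already a map of multiplicative cohomology theories, and then invokes uniqueness (in the weaker ``homotopy ring'' form, which is part of the same injectivity chain) to conclude the two maps agree after checking coefficients. Your direct-computation plan could in principle succeed, but you would be redoing that bookkeeping from scratch, and without the uniqueness input you would still not have shown the $\E_\infty$-refinement is unique, only that your particular construction has the right effect on $\pi_*$.
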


Here, the map $\mathscr{L}_{AS}$ induces on homotopy groups a version of the L-genus, precisely the version of the L-genus that has been employed by Atiyah and Singer in their index-theoretic proof of Hirzebruch's signature theorem \cite{MR236952}. The map $\sigma_\R$ on the other hand induces Hirzebruch's original $\L$-genus. Thus the result says that the two differ exactly by our comparison map.

\begin{ackn}
The authors would like to thank Johannes Sprang for his explanations regarding $p$-adic moment sequences and Johannes Ebert, Michael Joachim, and Achim Krause for helpful discussions, as well as anonymous referees for their helpful comments.
The authors would also like to thank the Hausdorff Center of mathematics for hospitality and providing a great working environment during the conference ``Hermitian K-theory and trace methods'' in November 2016. ML gave a talk about the complex case of \cref{ThmC} building on the earlier results obtained with TN and it was there that this paper was born. 
\end{ackn}

\section{Preliminaries}
In this section we will briefly recall the notions of $C^*$-algebras, KK-theory, and L-theory.
\subsection*{$C^*$-algebras}
A nice reference for $C^*$-algebras over $\R$ and their K-theory is Schroeder's book \cite{Schroeder}, further references include \cite{Constantinescu, Goodearl, Li} and \cite{Takesaki1, Takesaki2, Takesaki3} for complex $C^*$-algebras.
\begin{definition}
A $C^*$-algebra is Banach algebra $A$ equipped with an involution $(-)^*\colon A \to A^\op$ with $x^{**} = x$ such that the following two conditions hold:
\begin{enumerate}
\item For all $x \in A$ we have $|| x^*x|| = ||x||^2$, and
\item for all $x \in A$, the element $1+x^*x$ is invertible in the unitalization $A^+$.
\end{enumerate}
A complex $C^*$-algebra is a complex Banach algebra $A$ whose underlying real Banach algebra is a $C^*$-algebra and where the involution is complex sesquilinear, i.e.\ $(\lambda x)^* = \bar{\lambda} x^*$.
\end{definition}

\begin{Rmk}
The condition that $1+x^*x$ is invertible might be a bit surprising at first glance. We note that it is a consequence of spectral calculus that this condition is automatically fulfilled for complex $C^*$-algebras, see e.g.\ \cite{Takesaki1}. In the real case it can however not be left away, since for example $\C$ equipped with the identity involution satisfies the other conditions but $1+i^2=0$ is not invertible.
\end{Rmk}

\begin{Rmk}
The well-known structure theorems for complex $C^*$-algebras have the following real analogues:
\begin{enumerate}
\item Every $C^*$-algebra has a faithful representation on a real Hilbert space $\mathcal{H}$, i.e.\ is isometrically isomorphic to a $*$- and norm-closed subalgebra of $\mathcal{B}(\mathcal{H})$.
\item Every commutative and unital $C^*$-algebra is isometrically isomorphic to the $C^*$-algebra of $C_2$-equivariant continuous functions $X \to \C$ for a compact Hausdorff space $X$ equipped with a $C_2$-action, where $C_2$ acts on $\C$ by complex conjugation.
\end{enumerate}
\end{Rmk}

\begin{Rmk}
A number of remarks are in order.
\begin{enumerate}
	\item Together with $*$-homomorphisms, $C^*$-algebras form a category $\CAlg$, and likewise complex $C^*$-algebras form a category $\CAlg_\C$. We emphasise that $C^*$-algebras are not assumed to be unital, nor that $*$-homomorphisms are assumed to preserve a unit if it exists. Requiring, however, algebras to have a unit and morphisms to preserve units, one obtains similarly the categories $\CAlg^+$ and $\CAlg^+_\C$ of unital $C^*$-algebras.
	\item We note that $*$-homomorphisms are automatically contractive and hence continuous.
	\item By construction, there is a forgetful functor $\CAlg_\C \to \CAlg$ which we call the \emph{realification}, moreover, the construction $A \mapsto A_\C \stackrel{\mathrm{def}}{=} A\otimes_\R \C$ extends to a natural functor $\CAlg \to \CAlg_\C$, which we call the \emph{complexification}.
	\item There are unitalisation functors $(-)_\R^+\colon \CAlg \to \CAlg^+$ and $(-)_\C^+\colon \CAlg_\C \to \CAlg_\C^+$ which come with natural split exact sequences 
	\[ 0 \lto A \lto A^+_{\R} \lto \R \lto 0 \quad \text{ and } \quad 0 \lto B \lto B_{\C}^+ \lto \C \lto 0 \]
	respectively. If $A$ is unital, then $A^+_{\R}$ is canonically isomorphic to $A \times \R$, and likewise in the complex case.
\item The complexification functor is compatible with unitalisation, whereas the realification functor is not compatible with unitalisation. More precisely the solid diagram commutes, whereas the diagram involving dashed arrows does not.
\[\begin{tikzcd}
	\CAlg \ar[r] \ar[d] & \CAlg^+ \ar[d] \\
	\CAlg_\C \ar[r] \ar[u, bend left, dashed] & \CAlg_\C^+ \ar[u, bend left, dashed]
\end{tikzcd}\]
\item The categories $\CAlg$ and $\CAlg_\C$ are each equipped with a canonical symmetric monoidal structure, the maximal tensor product over $\R$ and $\C$, respectively. The maximal tensor product preserves short exact sequences of $C^*$-algebras and topological $\K$-theory is canonically lax symmetric monoidal.
\end{enumerate}
\end{Rmk}

\begin{definition}
A $C^*$-algebra is called separable if it contains a countable and dense subset. The full subcategory of $\CAlg_{(\C)}$ on separable $C^*$-algebras will be written $\CAlg^\sep_{(\C)}$. 
\end{definition}
The complexification and realification functors restrict to the subcategory of separable algebras. In addition, we note that every $C^*$-algebra is the union of its separable $C^*$-subalgebras and that the collection of separable $C^*$-subalgebras forms a filtered poset. For technical reasons, we will restrict our attention to separable algebras momentarily. However, all invariants $F$ of $C^*$-algebras we shall consider (i.e.\ topological K-theory and L-theory) send an algebra $A$ to the filtered colimit of $F$ applied to the separable subalgebras of $A$, and consequently, we can get rid of the separability assumptions.

\subsection{KK-theory}
In his seminal work on the Novikov conjecture \cite{Kasparov}, Kasparov invented (equivariant) bivariant topological K-theory, known as KK-theory. Phrased in categorical language, Kasparov's machine allowed to construct a tensor triangulated category $\KK$ and a functor
\[ \CAlg^\sep \lto \KK \]
which was later shown to be a localisation (necessarily at the KK-equivalences, i.e.\ those $*$-homomorphisms whose induced map in the KK-category is an isomorphism) \cite{Cuntz} and to be the initial functor to an additive category which is split exact and stable \cite{Higson}, see e.g.\ \cite{BEL} for more precise statements and a guide through (parts of) the literature. 
In \cite{LN}, it was then observed that the $\infty$-categorical localisation of $\CAlg^\sep$ at the KK-equivalences is a stably symmetric monoidal $\infty$-category whose homotopy category is canonically equivalent to the tensor triangulated category KK of Kasparov. This observation has also been taken up in \cite{BEL} (including extensions of these results to possibly non-separable $C^*$-algebras) in the equivariant case and was used in \cite{BEL2} in a proof of an equivariant form of Paschke duality.

\begin{definition}
We denote by $\KK = \CAlg^\sep[\KK^{-1}]$ the $\infty$-categorical localisation of $\CAlg^\sep$ at the KK-equivalences. Likewise, we denote by $\KK_\C = \CAlg^\sep_\C[\KK^{-1}]$ the variant for complex $C^*$-algebras.
\end{definition}

\begin{Rmk}
In \cite{LN}, different notation was used: In loc.\ cit.\, the authors were focussed mostly on the complex case and therefore denoted $\CAlg^\sep_\C[\KK^{-1}]$ by $\KK_\infty$, and its real variant by $\KK_\infty^\R$; the subscript $\infty$ was added to make clear that one was now working with an appropriate $\infty$-category rather than a triangulated category. We refrain from adding this subscript in this paper, however.
\end{Rmk}

\begin{definition}
The topological K-theory functor for separable $C^*$-algebras is given by the composite
\[ \K \colon \CAlg^\sep \lto \KK \lto \Sp \]
where the first functor is the localisation functor, the second is the corepresented functor $\map_\KK(\R,-)$, and $\Sp$ denotes the $\infty$-category of spectra.
\end{definition}

\begin{Rmk}
There are of course other, more classical definitions of topological K-theory functors \cite{Joachim1, Joachim2}, and it was shown in \cite{LN} that they are canonically equivalent to the definition given above. These more classical definitions are in fact given for possibly non-separable algebras and satisfy
\[ \K(A) \simeq \colim\limits_{A' \subseteq_{\sep} A} \K(A') \]
so we may also view the above definition as describing K-theory of possibly non-separable algebras. In \cite{BEL} this was formalised by considering the ind-completion of $\KK$ and again considering the functor corepresented by $\R$.

This definition, however, does not give all structure that topological K-theory has: For instance, it is a purely formal consequence of the definitions that K-theory sends certain short exact sequences (e.g.\ where the surjection is a Schochet fibration or admits a cpc split) to fibre sequences, but it is not a priori clear that it sends all short exact sequences of $C^*$-algebras to fibre sequences. However, this is known to be true, e.g. as a special case of \ \cite[Theorem {1.15}]{BEL} (for $X=\ast$ in the notation of loc. cit.).
\end{Rmk}

\subsection{L-theory}
In this subsection, we review some basic properties of L-theory which we will use throughout this paper, see also \cite[\S 2.2]{LN} for a further summary.

For our purposes, L-theory is most naturally considered as a functor introduced by Ranicki in \cite{Ranickiblue}
\[ \Ringinv \lto \Sp \]
where $\Ringinv$ is the category of involutive rings with ring homomorphisms preserving the involution. In fact, this functor can be written as the composition
\[ \Ringinv \lto \Catp \lto \Sp \]
where $\Catp$ is the $\infty$-category of Poincar\'e categories on which L-theory is a natural invariant, see \cite{CDHI,CDHII,CDHIII} for applications of this formalism to Grothendieck--Witt theory of number rings. Together with \cite{CDHIV}, or using results of Laures--McClure \cite{LMcC1, LMcC2}, L-theory is canonically endowed with a lax symmetric monoidal structure.
The first functor in the above composite sends a ring with involution $R$ to the pair $(\D^p(R),\QF^\s)$, so more precisely we are considering projective, 4-periodic symmetric L-theory of involutive rings in the sense of \cite{Ranickiblue}. Prior to the work \cite{CDHI,CDHII,CDHIII}, the third author had introduced L-spectra for dg-categories over $\Z\adjt$ with weak equivalences \cite[\S 7]{Schlichting}. In \cite[Appendix B.2]{CDHII}, it is shown that for $\Z\adjt$-algebras with involution, the two constructions of L-spectra are naturally equivalent.

There are natural forgetful functors 
\[ \CAlg^+ \lto \Ringinv_{\Z\adjt} \lto \Ringinv \]
which define L-theory of \emph{unital} $C^*$-algebras. Since many of the possibly different notions of L-theory agree on rings in which $2$ is invertible, and since this paper is concerned with $C^*$-algebras, we shall from now on restrict our attention to $\Z\adjt$-algebras with involution as the domain of L-theory.

\subsection*{L-theory for non-unital algebras}
For our applications, which involve KK-theory, it is necessary to define L-theory for possibly non-unital algebras. For this we define a unitalisation of non-unital rings in the usual way
\[ \Ringinv_{\Z\adjt,\nun} \lto \Ringinv_{\Z\adjt} \quad R \mapsto R^+_{\Z\adjt} \]
and note again that for unital $\Z\adjt$-algebras $S$, we have $S^+_{\Z\adjt} \cong S \times \Z\adjt$.
We then define L-theory on non-unital $\Z\adjt$-algebras as follows
\[ \L(R) \stackrel{\mathrm{def}}{=} \mathrm{fib} \big( \L(R^+_{\Z\adjt}) \to \L(\Z\adjt)\big).\]

Since L-theory commutes with finite products \cite[Corollary 4.4]{LN} (for this to be true it is crucial to work with \emph{projective} L-theory, rather than free L-theory which appears in the h-cobordisn classification program in surgery theory), we have not changed the definition of L-theory on unital rings, up to canonical equivalence.

However, from the point of view of applying L-theory to $C^*$-algebras, we now have constructed two functors 
\[ \CAlg \lto \Ringinv \]
one given by $A \mapsto A^+_{\R} $ and the other one given by $A \mapsto A^+_{\Z\adjt} $, i.e.\ we can either unitalise in $\R$-algebras or in $\Z\adjt$-algebras. Moreover, for complex algebras, we have three such functors, by adjoining a unit in $\C$-algebras, $\R$-algebras, or $\Z\adjt$-algebras, respectively\footnote{Of course, one could also unitalise in $\Z$-algebras, but see \cref{Rmk:excision-symmetric-L} below.}. We note that for a $C^*$-algebra $A$, there is a natural pullback diagram
\[ \begin{tikzcd}
	A^+_{\Z\adjt} \ar[r] \ar[d] & A^+_\R \ar[d] \ar[r] & A^+_\C \ar[d] \\
	\Z\adjt \ar[r] & \R \ar[r] & \C
\end{tikzcd}\]
where the right most vertical part only exists if $A$ is a complex $C^*$-algebra and where the vertical maps are split surjective. It is a theorem of Ranicki \cite{Ranickiyellow}, see e.g.\ \cite[Corollay 4.3]{LN} that both squares induce pullback squares on L-theory. Consequently, extending L-theory to non-unital (complex) $C^*$-algebras can be performed either by adjoining a unit in $\C$-algebras, or by forgetting to the underlying real $C^*$-algebra and then adjoining a unit in $\R$-algebras, or by forgetting to the underlying $\Z\adjt$-algebra and adjoining a unit there. 

\begin{Rmk}\label[Rmk]{Rmk:excision-symmetric-L}
We do not expect the diagram
\[ \begin{tikzcd}
	\L(A^+_\Z) \ar[r] \ar[d] & \L(A^+_{\Z\adjt}) \ar[d] \\
	\L(\Z) \ar[r] & \L(\Z\adjt)
\end{tikzcd}\]
to be a pullback for every $\Z\adjt$-algebra $A$. As a consequence, we do not expect the definition of $\L$-theory for non-unital rings to be independent of the base over which the unitalisation is performed in general. Ranicki however shows that this square is a pullback if symmetric L-theory is replaced by quadratic L-theory, see \cite[6.3.1]{Ranickiyellow}.
\end{Rmk}

Finally, as explained in \cite[Appendix]{LN}, the fact that L-theory is lax symmetric monoidal on unital $C^*$-algebras allows to deduce that L-theory as defined above is in fact canonically lax symmetric monoidal on all $C^*$-algebras.

\section{Proof of \cref{ThmC}}

For convenience we state again the theorem we shall prove in this section. We emphasize that KK-theory is not used in this proof.
\begin{Thm}
Let $A$ be a Banach algebra with involution. Then the canonical map $\L(A) \to \k(A)^{tC_2}$ is a 2-adic equivalence.
\end{Thm}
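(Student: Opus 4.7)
My plan is to derive the theorem from \cite[Theorem D.1]{KSW} via the Tate-square formalism for Poincaré $\infty$-categories developed in \cite{CDHII}. The starting point is the observation that $\L(A)$ and $\k(A)^{tC_2}$ appear as the right-hand column of a natural Cartesian square whose left-hand column involves a topological Grothendieck--Witt spectrum and the homotopy fixed points $\k(A)^{hC_2}$; the theorem then reduces to a corresponding 2-adic equivalence between the top-left and bottom-left corners of that square — the topological homotopy limit problem for Banach algebras — which is precisely what \cite[Theorem D.1]{KSW} provides.

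First I would reduce to the case of a unital $A$: both $\L(A)$ and $\k(A)^{tC_2}$ are defined in the non-unital case via the fiber over the unitalization $A \to A^+$, and 2-adic equivalences are closed under fibers. For unital $A$ I would work with the Poincaré $\infty$-category $(\D^p(A),\QF^\s)$ recalled in the preliminaries, and identify the canonical map $\L(A) \to \k(A)^{tC_2}$ as the right vertical of a topological analogue of the CDH Tate square
\[
\begin{tikzcd}
\mathrm{gw}^{\top}(A) \ar[r] \ar[d] & \L(A) \ar[d] \\
\k(A)^{hC_2} \ar[r] & \k(A)^{tC_2}
\end{tikzcd}
\]
in which the algebraic $\K$-theory of the Poincaré category is replaced by the topological connective $\k$-theory of the Banach algebra. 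By Cartesianness, a 2-adic equivalence on the right column is equivalent to a 2-adic equivalence on the left column.

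The left vertical $\mathrm{gw}^{\top}(A) \to \k(A)^{hC_2}$ is the topological homotopy limit problem for Banach algebras with involution, and its 2-adic equivalence is the content of \cite[Theorem D.1]{KSW}; invoking this will conclude the argument. The hardest step — and the part that requires the most care — is the construction of the topological Tate square itself: the algebraic CDH Tate square uses nonconnective algebraic $\K$-theory of $\D^p(A)$, so passing to connective topological $\k$-theory requires identifying the correct topological $\mathrm{gw}^{\top}$-spectrum in the Banach algebra setting and verifying that the resulting square remains Cartesian. For $C^*$-algebras the KK-theoretic framework recalled above streamlines this, but in the general Banach-algebra case it demands a direct analysis of the $C_2$-equivariant structure on $\k(A)$ induced by the involution, together with a comparison of the algebraic and topological Grothendieck--Witt spectra.
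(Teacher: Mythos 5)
Your overall strategy matches the paper's: pass to the topological Tate square with $\Gw_\top(A)$ and $\k(A)^{hC_2}$ in the left column and reduce the 2-adic equivalence of the right column to that of the left. However, there is a concrete gap in the final step. You assert that the 2-adic equivalence of $\Gw_\top(A) \to \k(A)^{hC_2}$ "is the content of \cite[Theorem D.1]{KSW}," but that theorem is stated for the \emph{Bott-periodic} topological Grothendieck--Witt spectrum $\GW_\top(A)$ mapping to $\K(A)^{hC_2}$ (with $\K$ the periodic topological K-theory), not for the connective objects $\Gw_\top(A)$ and $\k(A)^{hC_2}$ that actually sit in the left column of your Tate square. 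Bridging this requires a separate, non-trivial argument: the paper shows that the commutative square
\[ \begin{tikzcd}
	\Gw_\top(A) \ar[r] \ar[d] & \GW_\top(A) \ar[d] \\
	\k(A)^{hC_2} \ar[r] & \K(A)^{hC_2}
\end{tikzcd}\]
is Cartesian by observing that the horizontal fibres are coconnected (this uses an input from the proof of \cite[Theorem D.1]{KSW}) while the vertical fibres are 4-periodic, so the total fibre is both bounded above and periodic, hence trivial. Without this connective-to-periodic comparison, your argument does not close; KSW alone does not control $\Gw_\top(A)\to\k(A)^{hC_2}$.

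A smaller point: you flag the construction of the topological Tate square as the hardest step, but the paper obtains it quite directly by taking the geometric realization of the algebraic Tate square over the cosimplicial ring $n\mapsto C^0(\Delta^n,A)$, using the homotopy invariance of $\L$ (\cref{Prop:homotopy-invariance-of-L}) to identify the top-right corner with $\L(A)$ and the fact that homotopy orbits commute with geometric realizations to verify Cartesianness. The genuinely subtle ingredient is instead the square displayed above, which is absent from your outline.
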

In the proof of this theorem and in fact also of \cref{ThmA}, we will make use of the topological Grothendieck--Witt spectra introduced in \cite[\S 10]{Schlichting} which we denote by $\Gw_\top(A)$. We recall that the family of topological $n$-simplices $\{\Delta^n\}_{n\in \Delta}$ form, via the canonical coface and codegeneracy maps, a cosimplicial topological space. Hence, by considering algebras of continuous functions one obtains a simplicial involutive Banach algebra $C^0(\Delta^n,A)$ (with pointwise involution) and one defines $\Gw_\top(A)$
as the geometric realisation
\[ \Gw_\top(A) = \colim\limits_{n \in \Delta^\op} \Gw(C^0(\Delta^n,A)) \]
of the resulting simplicial spectrum, where $\Gw$ is the (algebraic) Grothendieck--Witt functor. 
We recall from \cite[Prop.\ 10.2]{Schlichting} that connective topological K-theory $\k(A)$ admits a similar description in terms of connective algebraic K-theory $\K_\alg$:
\[ \k(A) = \colim\limits_{n \in \Delta^\op} \K_{\alg}(C^0(\Delta^n,A)).\]

For any ring with involution $R$ with $2 \in R^\times$ there is a natural fibre sequence
\begin{equation}\label{fundamental-sequence} (\K_\alg(R))_{hC_2} \lto \Gw(R) \lto \L(R),\end{equation}
see e.g.\ \cite[Main Theorem]{CDHII} or \cite[Theorem 7.6]{Schlichting} using that the Grothendieck--Witt spectra of \cite{CDHII} and of \cite{Schlichting} agree, see \cite[Appendix B.2]{CDHII}. More specifically, in the notation of \cite{CDHII}, $\Gw(R)$ is given by $\Gw(R;\QF^\s_R)$, and likewise $\L(R)$ is given by $\L(R;\QF^\s_R)$; we remark here that by assumption $2$ is invertible in $R$, many of the a priori different versions of Grothendieck--Witt theory studied in \cite{CDHII, CDHIII} collapse to the same object, which we here simply denote by $\Gw(R)$, see e.g.\ \cite[Remark R.4]{CDHIII}.
By \cite[Corollary 4.4.14]{CDHII} this fibre sequence can also be encoded in the following natural pullback diagram.
\begin{equation}\label{square}
 \begin{tikzcd}
	\Gw(R) \ar[r] \ar[d] & \L(R) \ar[d] \\
	\K_\alg(R)^{hC_2} \ar[r] & \K_\alg(R)^{tC_2} 
\end{tikzcd}
\end{equation}
We may then likewise define 
\[ \L_\top(A) = \colim\limits_{n \in \Delta^\op} \L(C^0(\Delta^n,A)).\]
An astonishing feature of algebraic L-theory is the following homotopy invariance statement.
\begin{Prop}\label[Prop]{Prop:homotopy-invariance-of-L}
For every Banach algebra with involution, the canonical map $\L(A) \to \L_\top(A)$ is an equivalence.
\end{Prop}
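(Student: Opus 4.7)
My plan is to reduce the proposition to a vanishing statement. The map $\L(A)\to \L_\top(A)$ is the inclusion of $\L(C^0(\Delta^0,A))$ into the colimit over $\Delta^\op$, so it suffices to prove that the cosimplicial spectrum $[n]\mapsto \L(C^0(\Delta^n,A))$ is essentially constant with value $\L(A)$. Equivalently, I would show that for every $n\geq 0$ the constant-function inclusion $A\hookrightarrow C^0(\Delta^n,A)$ induces an equivalence on $\L$; the splitting is present on both sides (via evaluation at any chosen vertex), and what needs to be proved is that it is an equivalence. Using that $\Delta^n$ is the cone on $\Delta^{n-1}$ and working one coordinate at a time, an induction reduces matters to the base case $n=1$, namely the claim that the constant-function inclusion $A\hookrightarrow C^0([0,1],A)$ induces an equivalence on $\L$.

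\textbf{Splitting off the cone.} For $n=1$ I would invoke the split short exact sequence of involutive $\Z\adjt$-algebras
\[
0 \lto \mathrm{Cone}(A) \lto C^0([0,1],A) \xrightarrow{\ev_0} A \lto 0, \qquad \mathrm{Cone}(A) := C_0((0,1],A),
\]
split by the constant-function inclusion. Additivity of $\L$ on split Verdier sequences of Poincaré $\infty$-categories, applied to the associated split sequence of module categories $\D^p(-)$, then yields a natural splitting
\[
\L(C^0([0,1],A)) \simeq \L(A) \oplus \L(\mathrm{Cone}(A)),
\]
under which the constant-function inclusion is the inclusion of the first summand. Consequently, the proposition reduces to the vanishing statement $\L(\mathrm{Cone}(A))\simeq 0$.

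\textbf{Vanishing of $\L$ on the cone.} The vanishing of $\L(\mathrm{Cone}(A))$ I would establish by an Eilenberg swindle on the cone. By packing countably many reparametrised copies of $(0,1]$ into $(0,1]$ itself (e.g.\ via the decomposition $(0,1]=\bigsqcup_{n\geq 0}(2^{-n-1},2^{-n}]$), one equips $\mathrm{Cone}(A)$ with a flasqueness-type structure which, combined with the additivity of $\L$, forces $\L(\mathrm{Cone}(A))\simeq 0$ via the usual identity $\mathrm{id}\simeq \mathrm{id}+\mathrm{id}$. The principal technical issue will be arranging the swindle to be compatible with the involution on $\mathrm{Cone}(A)$; this can be handled by choosing the packing to respect a natural $C_2$-symmetric structure, or, more robustly, by importing the flasqueness already available on the underlying $\K_\alg$-theory and transferring it to symmetric L-theory via the pullback square \eqref{square}. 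Alternatively, and perhaps closer to the argument underlying \cite[Remark 10.4]{Schlichting}, one may combine the Karoubi--Ranicki $\mathbb{A}^1$-invariance $\L(R)\xrightarrow{\sim}\L(R[t])$ of symmetric L-theory on $\Z\adjt$-algebras with a Stone--Weierstrass approximation to pass from $A[t]$ to $C^0([0,1],A)$, provided one has at hand a Bost-type density/continuity property of $\L$ for Banach algebras.
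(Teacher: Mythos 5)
The paper does not actually prove this proposition; it is imported by citation to Schlichting's Remark~10.4, which in turn traces back to Karoubi's work on the hermitian K-theory of Banach algebras. So there is no argument in the paper to compare yours against line by line; what I can do is assess whether your sketch would succeed as written.

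Your skeleton --- reduce to showing that the constant-function inclusion $A\hookrightarrow C^0(\Delta^n,A)$ is an $\L$-equivalence, reduce by iteration to $n=1$, and split off the cone --- is the right one, and the splitting $\L(C^0([0,1],A))\simeq\L(A)\oplus\L(\mathrm{Cone}(A))$ is licensed by Ranicki's excision result \cite{Ranickiyellow} (used repeatedly in the paper, see the discussion around \cref{Rmk:excision-symmetric-L} and \cite[Corollary~4.3]{LN}) guaranteeing that $\L$ sends split short exact sequences of $\Z\adjt$-algebras with involution to split fibre sequences. The whole weight of your proof therefore falls on the vanishing $\L(\mathrm{Cone}(A))\simeq 0$, and here there is a genuine gap.

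The Eilenberg swindle you propose cannot work as stated, no matter how carefully one arranges compatibility with the involution. Any flasqueness-type structure on $\mathrm{Cone}(A)$ --- equivalently on $\D^p(\mathrm{Cone}(A))$ --- which, via $\mathrm{id}\simeq\mathrm{id}\oplus\mathrm{id}$ and additivity, would force $\L(\mathrm{Cone}(A))\simeq 0$, would equally force $\K_\alg(\mathrm{Cone}(A))\simeq 0$: additivity and the swindle identity are just as available for algebraic K-theory, and insisting on duality-compatibility only adds constraints. But algebraic K-theory is \emph{not} homotopy invariant on Banach algebras, so $\K_\alg(\mathrm{Cone}(A))$ does not vanish in general: already for $A=\C$, the constant inclusion $\K_1^\alg(\C)\to\K_1^\alg(C([0,1],\C))$ is far from surjective, as the determinant exhibits $C([0,1],\C)^\times$ (nowhere-vanishing continuous functions) as a direct summand of the target, of which the constants $\C^\times$ form a proper subgroup. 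That $\L$, unlike $\K_\alg$, \emph{is} homotopy invariant on Banach algebras is precisely the non-formal content of the proposition; a swindle argument is blind to what separates the two invariants and therefore cannot establish it. Your second variant --- importing flasqueness from $\K_\alg$ via the square \eqref{square} --- fails for the same reason: there is no flasqueness on $\K_\alg(\mathrm{Cone}(A))$ to import. Your third variant, combining the Karoubi--Ranicki $\mathbb{A}^1$-invariance $\L(R)\xrightarrow{\sim}\L(R[t])$ with a density/approximation argument, is the one that can succeed and is closest in spirit to the arguments in the cited literature; but the density step is the heart of the matter and not a technicality --- one needs an analytic rigidity statement for hermitian forms over Banach algebras (nearby unimodular forms are isometric, continuous families of forms can be locally trivialised) --- and you explicitly stop short of supplying it. As written, your proposal has the correct shape but leaves the essential step open.
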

\begin{proof}

Since $\L$-theory is $4$-periodic, it suffices to show that $\L(A) \to \L_\top(A)$ induces an isomorphism on negative homotopy groups. Recall that a sequence of spectra is a fibre sequence if and only if it is a cofibre sequence. Since colimits commute with colimits, we find that geometric realizations preserve cofibre sequences and homotopy orbits for group actions. Consequently, from the fibre sequence \eqref{fundamental-sequence} and the definitions, we have a fibre sequence
\begin{equation} \label{fundamental-sequence-top}
	\k(A)_{hC_2} \lto \Gw_\top(A) \lto \L_\top(A).
\end{equation}
Since the first term in this sequence is connective, the latter map induces isomorphisms on negative homotopy groups. \cite[Remark 10.4]{Schlichting} gives that $\Gw(A) \to \Gw_\top(A)$ induces an isomorphism on negative homotopy groups, so the proposition is proven.
\end{proof}

Combining \cref{Prop:homotopy-invariance-of-L} with the fibre sequence \eqref{fundamental-sequence-top}, we obtain the following corollary.
\begin{Cor}\label[Cor]{Cor:fibre-sequence-top}
There is a natural fibre sequence of functors 
\[ \k(-)_{hC_2} \lto \Gw_\top(-) \lto \L(-).\]
\end{Cor}

\begin{proof}[Proof of \cref{ThmC}]
We first claim that there is the following natural square of spectra
\begin{equation}\label{pullbackL}
 \begin{tikzcd}
	\Gw_\top(A) \ar[r] \ar[d] & \L(A) \ar[d] \\
	\k(A)^{hC_2} \ar[r] & \k(A)^{tC_2}
\end{tikzcd}
\end{equation}
and that this square is a pullback square.
To see this we use \cref{Prop:homotopy-invariance-of-L} to replace $\L(A)$ by its topological variant and by taking the geometric realization of \eqref{square} we get a diagram as desired (using the canonical colimit interchange map for the lower two corners). To see that it is a pullback we use that the canonical map induced on horizontal fibres is an equivalence, since homotopy orbits commute with the geometric realization.
We wish to show that the right vertical map is an equivalence modulo 2. To do so, we first note that since the transformation $\L(-) \to \k(-)^{tC_2}$ is lax symmetric monoidal, the map $\L(A) \to \k(A)^{tC_2}$ is one of $\L(\R)$-modules so its fibre is 4-periodic. Then we consider the Bott-periodic analog $\GW_\top(A)$ of $\Gw_\top(A)$ also used in \cite[Appendix D]{KSW} which participates in the following commutative diagram.
\[\begin{tikzcd}
	\k(A)_{hC_2} \ar[r] \ar[d] & \Gw_\top(A) \ar[d] \\
	\K(A)_{hC_2} \ar[r] & \GW_\top(A)
\end{tikzcd}\]
One defines a functor $\mathbb{L}_\top$\footnote{We warn the reader that this object is not given by the topological version of Karoubi-invariant L-theory sometimes denoted by $\mathbb{L}$ in the literature.} as the cofibre of the lower horizontal map in the above square. By \cref{Cor:fibre-sequence-top} one obtains a map from the left to the right following square
\begin{equation}\label{pullbackBig}
 \begin{tikzcd}
	\Gw_\top(A) \ar[r] \ar[d] & \GW_\top(A) \ar[d] & \L(A) \ar[r] \ar[d] & \mathbb{L}_\top(A) \ar[d] \\
	\k(A)^{hC_2} \ar[r] & \K(A)^{hC_2} & \k(A)^{tC_2} \ar[r] & \K(A)^{tC_2} 
\end{tikzcd}
\end{equation}
again using the definition of the Tate construction. One directly checks that the square of fibres is cartesian, so that the induced map of total fibres of the above squares is an equivalence.
Furthermore, from \cite[Lemma D.2]{KSW}, it follows that $\mathbb{L}_{\top}(A)/2 = 0 = \K(A)^{tC_2}/2$. Consequently, modulo 2 there is a canonical equivalence between the total fibre of the left square in \eqref{pullbackBig} and the fibre of $\L(A) \to \k(A)^{tC_2}$. Now, the top horizontal map of the left square in \eqref{pullbackBig} induces an equivalence on connective covers, see \cite[Proof of Theorem D.1]{KSW}, so its fibre is coconnected. Likewise, the bottom horizontal fibre is 
$(\tau_{<0}\K(A))^{hC_2}$ which is coconnected since coconnected spectra are closed under limits in spectra. Hence, the total fibre of the left square in \eqref{pullbackBig} is coconnected. In total, it follows that the fibre modulo 2 of the map $\L(A) \to \k(A)^{tC_2}$  is bounded above and 4-periodic, hence trivial.\footnote{In fact, the total fibre is 4-periodic integrally: This is because diagram involving $\mathbb{L}$, $\mathbb{L}_\top$, $\k(A)^{tC_2}$ and $\K(A)^{tC_2}$ is one of $\L(\R)$-modules, but for sake of shortness, we omit an argument here.} 
\cref{ThmC} is therefore proven.
\end{proof}

\begin{Cor}\label[Cor]{Cor:KK-invariance}
The functor $\L$ descends to a functor $\KK \to \Sp$.
\end{Cor}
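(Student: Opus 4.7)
The plan is to deduce KK-invariance of $\L$ by combining the 2-adic input of \cref{ThmC} with the 2-inverted statement from \cite{LN} via arithmetic fracture. By the universal property of the $\infty$-categorical localisation $\CAlg^\sep \to \KK$, it is enough to show that $\L(f)$ is an equivalence of spectra for every KK-equivalence $f \colon A \to B$. I would fix such an $f$ and verify this equivalence rationally and 2-adically, then assemble.

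The rational half is already done: in \cite{LN} it is shown that $A \mapsto \L(A)\adjt$ descends to $\KK$, so $\L(f)\adjt$ is an equivalence. For the 2-adic half, $\k \colon \CAlg^\sep \to \Sp$ is tautologically KK-invariant, being corepresented by $\R$ in $\KK$. Since the Tate construction and 2-completion both preserve equivalences of spectra, $\bigl(\k(f)^{tC_2}\bigr)^\cwedge_2$ is an equivalence. By \cref{ThmC}, applied naturally in the $C^*$-algebra, the natural transformation $\L(-)^\cwedge_2 \to \bigl(\k(-)^{tC_2}\bigr)^\cwedge_2$ is an equivalence of functors, and hence $\L(f)^\cwedge_2$ is an equivalence as well.

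To assemble the two halves, I would invoke the 2-local arithmetic fracture square
\[\begin{tikzcd}
\L(A) \ar[r] \ar[d] & \L(A)^\cwedge_2 \ar[d] \\
\L(A)\adjt \ar[r] & \bigl(\L(A)^\cwedge_2\bigr)\adjt
\end{tikzcd}\]
Once this is known to be a natural pullback of spectra, the equivalence of $\L(f)$ reduces to equivalence of the other three corners, all of which were just verified. The main obstacle is that $\L(A)$ is $4$-periodic rather than bounded below, so the classical fracture theorem does not apply verbatim. I would handle this by considering the cofibre $C$ of $\L(f)$, noting that $C\adjt \simeq 0$ and $C^\cwedge_2 \simeq 0$ by the two halves above, and then ruling out infinitely 2-divisible contributions to $\pi_\ast C$; one route is to pass to a connective cover and exploit 4-periodicity to reduce to a bounded-below check, another is to import sufficient finiteness for $\pi_\ast \L(A)$ from the natural 2-adic comparison with $\k(A)^{tC_2}$ provided by \cref{ThmC}.
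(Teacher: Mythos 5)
Your proposal follows the same route as the paper: use \cite{LN} to handle $\L\adjt$, use \cref{ThmC} plus corepresentability of $\k$ to handle $\L^\cwedge_2$, and assemble via arithmetic fracture. The paper's proof is exactly this, stated as a pullback square of functors.

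However, the ``main obstacle'' you identify is not a real obstacle. The single-prime arithmetic fracture square
\[
\begin{tikzcd}
X \ar[r] \ar[d] & X^\cwedge_2 \ar[d] \\
X\adjt \ar[r] & \bigl(X^\cwedge_2\bigr)\adjt
\end{tikzcd}
\]
is a pullback for \emph{every} spectrum $X$, with no bounded-below or finiteness hypothesis. Indeed, the horizontal fibres are $\Sigma^{-1}(X \otimes \SS/2^\infty)$ and $\Sigma^{-1}(X^\cwedge_2 \otimes \SS/2^\infty)$; the map between them is induced by $X \to X^\cwedge_2$, whose fibre is $\SS/2$-acyclic and hence $\SS/2^\infty$-acyclic, so the map on fibres is an equivalence. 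Your proposed workarounds (connective covers, importing finiteness from the Tate comparison) are therefore unnecessary.

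Your alternative cofibre argument is actually cleaner than you make it sound and does not require any further input to rule out ``infinite $2$-divisibility.'' If $C = \cofib(\L(f))$ satisfies $C^\cwedge_2 \simeq 0$, then $C$ is $\SS/2$-acyclic, i.e.\ $C/2 \simeq 0$, so multiplication by $2$ is an equivalence on $C$ and $C \simeq C\adjt$. Combined with $C\adjt \simeq 0$ this gives $C \simeq 0$ immediately. So either way the assembly step goes through without a connective-cover detour or a finiteness argument.
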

\begin{proof}
The arithmetic fracture square provides a pullback square of functors 
\[ \begin{tikzcd}
	\L \ar[r] \ar[d] & \L^\cwedge_2 \ar[d] \\
	\L\adjt \ar[r] & \L^\cwedge_2\adjt
\end{tikzcd}\]
so it suffices to show that $\L\adjt$ and $\L^\cwedge_2$ are KK-invariant. The former is a direct consequence of the natural isomorphism between $\K_n(-)\adjt$ and $\L_n(-)\adjt$ (see also \cite{LN}) and the latter follows from \cref{ThmC}.
\end{proof}

\begin{Rmk}\label[Rmk]{remark_yoneda}
In what follows, we will crucially use different variants of the $\infty$-categorical Yoneda lemma. We will make these different variants explicit now. To that end, assume that $\cC$ is a stable $\infty$-category, the relevant example for this paper is $\cC = \KK$, and let $c \in \cC$ be an object. Then 
the \emph{mapping space} functor $\Map_{\cC}(c, -)\colon \cC \to \Spc$ admits an essentially unique refinement to a limit preserving functor $\map_{\cC}(c,-)\colon \cC \to \Sp$. Here, refinement means that it is equipped with a natural equivalence $\Omega^\infty\map_{\cC}(c,-) \simeq \Map_{\cC}(c, -) $. The spectrum $\map_{\cC}(c,d)$ is called the \emph{mapping spectrum} in $\cC$, it recovers the mapping space upon applying $\Omega^{\infty}$. 

Now we have the following versions of the Yoneda lemma (see e.g.\ \cite[Lemma 3.6]{LN} for references and proofs of \eqref{yoneda-item1}--\eqref{yoneda-item3} and \cite{Nikolaus} for \eqref{yoneda-item4}). In all instances, the natural equivalences are induced by evaluating a natural transformation on the identity.
\begin{enumerate}
\item\label{yoneda-item1} For any functor $F\colon \cC \to \Spc$, we have a natural equivalence
\[
\Map_{\Fun(\cC, \Spc)}(\Map_{\cC}(c, -), F) \simeq F(c).
\] 
\item\label{yoneda-item2}
For any finite product preserving functor $F\colon \cC \to \Sp_{\geq 0}$, we have a natural equivalence
\[
\Map_{\Fun(\cC,\Sp_{\geq 0})}(\tau_{\geq 0}\map_{\cC}(c, -), F) \simeq \Omega^\infty F(c).
\] 
\item \label{yoneda-item3}
For any finite product preserving functor 
$F\colon \cC \to \Ab$, we have a natural isomorphism
\[
\Hom_{\Fun(\cC,\Ab)}(\pi_0\map_{\cC}(c, -), F) \simeq \pi_0F(c).
\] 
\item\label{yoneda-item4} 
If $\cC$ is equipped with a symmetric monoidal structure, $F\colon \cC \to \Sp_{\geq 0}$ is lax symmetric monoidal and preserves finite products and $\mathbbm{1}$ is the tensor unit, then the space 
 \[
\Map_{\Fun^{\mathrm{lax}}(\cC, \Sp_{\geq 0})}(\tau_{\geq 0}\map_{\cC}(\mathbbm{1}, -), F)
 \]
 of lax symmetric monoidal transformations is contractible, i.e.\ there is a unique lax symmetric monoidal transformation $\tau_{\geq0}\map_{\cC}(\mathbbm{1}, -) \to F$. Under the identification of \eqref{yoneda-item2} this corresponds to the element $1$ in the algebra $\pi_0(F(\mathbbm{1}))$.
\end{enumerate}
In particular from \eqref{yoneda-item2} and \eqref{yoneda-item3} together, we deduce that for any finite product preserving functor $F\colon \cC \to \Sp_{\geq 0}$ we have a bijection
between homotopy classes of natural transformations $\tau_{\geq 0}\map_{\cC}(c, -) \to F$ and natural transformations $\pi_0 \map_{\cC}(c, -) \to \pi_0 F$. This bijection is given by taking the effect on $\pi_0$ of a natural transformation, as follows from the explicit description of the equivalences in \eqref{yoneda-item2} and \eqref{yoneda-item3}. In other words: every transformation $\pi_0 \map_{\cC}(c, -) \to \pi_0 F$ can be uniquely (up to homotopy) extended to a transformation of connective spectrum valued functors. 
In fact, if $F$ were finite limit preserving then one could even extend to a natural transformation of spectrum valued functors, by the following stable version of the Yoneda lemma, which says that for any finite limit preserving functor $F\colon \cC \to \Sp$ we have a natural equivalence
\[
\Map_{\Fun(\cC,\Sp)}(\map_{\cC}(c, -), F) \simeq \Omega^\infty F(c)
\] 
In our case of interest, the functor $F$ will be $\L$-theory which does not preserve finite limits (unless one inverts 2). This is the ultimate reason why we can only produce functors from connective $\K$-theory to $\L$-theory (and see \cref{thm} which implies that this is the best one can achieve).
\end{Rmk}

\begin{Cor}\label[Cor]{Cor:GW_top}
The functor $\tau_{\geq 0}\Gw_\top\colon \KK \to \Sp_{\geq 0}$ is canonically equivalent to $\k \oplus \k$.
\end{Cor}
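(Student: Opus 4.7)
The plan is to construct a natural equivalence $\Phi \colon \k \oplus \k \xrightarrow{\simeq} \tau_{\geq 0} \Gw_\top$ of KK-invariant functors by exploiting the orthogonal decomposition of Hermitian forms over $C^*$-algebras into positive- and negative-definite pieces. First I would establish KK-invariance of $\Gw_\top$: the pullback square from the proof of \cref{ThmC} presents $\Gw_\top$ as a pullback of $\L$, $\k^{hC_2}$, and $\k^{tC_2}$, each of which is KK-invariant (the first by \cref{Cor:KK-invariance}, the latter two since $\k$ is KK-invariant and taking homotopy fixed points or Tate constructions preserves equivalences). Since $\tau_{\geq 0}$ is a right adjoint it preserves both pullbacks and KK-invariance, so $\tau_{\geq 0}\Gw_\top$ and $\k \oplus \k$ both descend to functors on $\KK$.

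For the construction of $\Phi$, I would use that over any $C^*$-algebra $A$, every finitely generated projective $A$-module $P$ carries a canonical positive-definite Hermitian form obtained by restricting the standard inner product on any free module $A^n \supseteq P$ (which is well-defined up to canonical isomorphism, independent of the embedding). This defines a functor from projective modules to unimodular Hermitian-form modules, which induces a natural transformation $\k(A) \to \tau_{\geq 0}\Gw_\top(A)$; negating the form produces a second, and summing yields $\Phi_A$. To see $\Phi$ is an equivalence, the key input is the spectral theorem in $C^*$-algebras, which decomposes any non-degenerate symmetric Hermitian form on a projective module canonically into an orthogonal sum of a positive- and a negative-definite piece; this decomposition extends continuously to the simplicial thickenings $C^0(\Delta^\bullet,A)$ used to define $\Gw_\top$. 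On $\pi_0$ this immediately yields the splitting $\Gw_0^\top(A) = \K_0(A) \oplus \K_0(A)$, matching $\pi_0(\k(A) \oplus \k(A))$.

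The hard part will be upgrading the spectral decomposition from a statement about $\pi_0$ (or spaces) to a statement about spectra. A clean approach is to package the decomposition into an equivalence of topologically enriched Poincar\'e categories $\Unimod(C^0(\Delta^\bullet,A),\QF^\s) \simeq \Proj(C^0(\Delta^\bullet,A)) \times \Proj(C^0(\Delta^\bullet,A))$ and then apply the connective Grothendieck--Witt functor; this requires the spectral decomposition to be sufficiently functorial and families-compatible. An alternative is to verify the statement directly for $A = \R$ (where classically $\Gw_\top(\R) \simeq \ko \oplus \ko$ via the maximal-compact equivalence $O(p,q) \simeq O(p) \times O(q)$) and then propagate to all of $\KK$ using the $\ko$-module structure and the KK-invariance established above; this second route requires a generation argument for $\KK$ over $\ko$-modules, which is itself a nontrivial input but avoids working with the simplicial thickenings fiberwise.
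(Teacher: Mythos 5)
Your proposal is correct in outline but diverges from the paper's route, and the step you flag as ``the hard part'' is precisely what the paper is designed to avoid.

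Your first suggested route --- upgrading the orthogonal spectral decomposition to an equivalence of topologically enriched form categories $\Unimod(C^0(\Delta^\bullet,A)) \simeq \Proj(C^0(\Delta^\bullet,A))^{\times 2}$ and then applying connective Grothendieck--Witt theory --- is essentially what the paper sketches in \cref{Remark:alternative-argument-for-KK-invariance} as an alternative. There the crucial input is identified: the inclusion of the isometry group of $(P,\sigma^\pos)$ into the automorphism group of $P$ must be shown to be a homotopy equivalence, a topological-category version of the maximal-compact statement $O(p,q)\simeq O(p)\times O(q)$ you invoke at $A=\R$. The authors explicitly chose not to carry this out, remarking it ``might in fact be the more natural thing to do.'' Your second suggested route (verify at $A=\R$ and propagate using the $\ko$-module structure) has a genuine gap: $\R$ does not generate $\KK$ as a thick subcategory --- it generates only the bootstrap class $\mathscr{B}$, a proper subcategory. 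KK-invariance of the two functors says only that they descend to $\KK$; it does not let you conclude from an equivalence at $\R$ that a natural transformation is an equivalence everywhere, so additional input would be required here.

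The paper's actual proof takes a third path that makes the hard upgrade unnecessary. It uses only the classical $\pi_0$-level statement $\Gw^\top_0(A)\cong\K_0(A)\oplus\K_0(A)$ (Karoubi), together with an excisiveness argument. Concretely: the fibre sequence $\k_{hC_2}\to\Gw_\top\to\L$ together with \cref{Cor:KK-invariance} gives KK-invariance; then the pullback square with corners $\tau_{\geq 0}\Gw_\top$, $\tau_{\geq 0}\L$, $\tau_{\geq 0}\k^{hC_2}$, $\tau_{\geq 0}\k^{tC_2}$ shows $\tau_{\geq 0}\Gw_\top$ is excisive as a $\Sp_{\geq 0}$-valued functor, the key input being Ranicki's formula for the failure of excisiveness of L-theory. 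Since $\k$ is corepresented by $\R$, there is a natural transformation $\k\oplus\k \to \tau_{\geq 0}\Gw_\top$ realising the known $\pi_0$-isomorphism. Excisiveness of both sides on the stable $\infty$-category $\KK$ gives $\pi_n(F(A))\cong\pi_0(F(A[-n]))$ for $n\geq 0$, so a natural $\pi_0$-isomorphism is automatically a $\pi_n$-isomorphism for all $n\geq 0$. This avoids any analysis of automorphism groups of forms or a families version of spectral decomposition.
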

\begin{proof}
From the fibre sequence 
\[ \k_{hC_2} \lto \Gw_\top \lto \L \]
and \cref{Cor:KK-invariance}, we deduce that $\Gw_\top$ is a KK-invariant functor. In addition, we now show that the induced functor $\tau_{\geq 0}\Gw_\top \colon \KK \to \Sp_{\geq 0}$ is excisive, i.e.\ sends pushout diagrams to pullback diagrams. For this, we consider again the pullback diagram
\[\begin{tikzcd}
	\tau_{\geq 0}\Gw_\top \ar[r] \ar[d] & \tau_{\geq 0}\L \ar[d] \\
	\tau_{\geq 0}\k^{hC_2} \ar[r] & \tau_{\geq 0}\k^{tC_2}
\end{tikzcd}\]
of functors taking values in connective spectra. We wish to show that, as such $\tau_{\geq 0}\Gw_\top$ is excisive. Note that this means that pullbacks are taken in connective spectra, so being excisive as connective spectrum valued functor is not the same as being excisive when viewed as a spectrum valued functor (via the canonical inclusion of connective spectra in all spectra). We now observe that $\tau_{\geq 0}\k^{hC_2}$ is excisive, so it suffices to show that the fibre of the right vertical map is excisive as well. For this, let us consider a pullback diagram of $C^*$-algebras
\[\begin{tikzcd}
	A \ar[r] \ar[d] & B \ar[d] \\
	A' \ar[r] & B'
\end{tikzcd}\]
whose vertical maps are surjections with a completely positive contractive split (any pullback diagram in $\KK$ is the image of such a diagram e.g.\ by \cite[Theorem 1.4 (2) \& Lemma 2.14]{BEL}). We need to show that the fibre of $\tau_{\geq0} \L \to \tau_{\geq 0}\k^{tC_2}$ sends this square to a pullback diagram of connective spectra. We then consider the commutative square
\begin{equation}\label{diag:excisiveness}
\begin{tikzcd}
	\L(A') \oplus_{\L(A)} \L(B) \ar[r] \ar[d] & \L(B') \ar[d] \\
	\k(A')^{tC_2} \oplus_{k(A)^{tC_2}} \k(B)^{tC_2} \ar[r] & \k(B')^{tC_2} 
\end{tikzcd}
\end{equation}
both of whose horizontal cofibres are given by 
\[ \big[\coker(\k_0(A') \oplus \k_0(B) \to \k_0(B') \big]^{tC_2},\]
see \cite[Theorem 4.2]{LN} for the upper horizontal one. In fact, the proof in loc.\ cit.\ gives that the induced map on horizontal cofibres is an equivalence, therefore diagram \eqref{diag:excisiveness} is a pullback diagram. Consequently, the map on vertical fibres is also an equivalence. This shows that $\mathrm{fib}(\L \to \k^{tC_2})$ is excisive when viewed as a spectrum valued functor, and consequently its connective cover, which agrees with the fibre of $\tau_{\geq 0} \L \to \tau_{\geq 0} \k^{tC_2}$, is excisive when viewed as a connective spectrum valued functor.

We now observe that $\pi_0(\Gw_\top(A))$ is naturally isomorphic to $\pi_0(\k(A) \oplus \k(A))$, induced by sending a tuple $(P_1,P_2)$ of projective modules over $A$ to the hermitian form which is the canonical positive definite form on $P_1$ and the canonical negative definite form on $P_2$, see \cite[Theorem 2.3]{Karoubi}. Since $\k$ is the connective cover of $\K$, which is corepresented by $\R$, and $\Gw_\top$ is product preserving the natural transformation $\pi_0(\k(A) \oplus \k(A)) \to \pi_0(\Gw_\top(A))$ extends to a  transformation $\k\oplus \k \to \Gw_\top(A)$ which induces an isomorphism on $\pi_0$, see  \cref{remark_yoneda}. Since both sides are excisive when viewed as taking values in connective spectra, we deduce that this map induces an isomorphism in $\pi_n$ for all $n\geq 0$: Indeed the just described canonical transformation induces a commutative diagram
\[\begin{tikzcd}
 	\k(\Omega^n A) \oplus \k(\Omega^n A) \ar[r] \ar[d] & \Gw_\top(\Omega^n A) \ar[d] \\
	\Omega^n \k(A) \oplus \Omega^n \k(A) \ar[r] & \Omega^n \Gw_\top(A) 
\end{tikzcd}\]
whose vertical maps are equivalences after taking connective covers (by the established fact that both functors are excisice with values in connective spectra). Now the upper horizontal map induces an isomorphism on $\pi_0$, consequently so does the lower horizontal map. This establishes that $\k(A) \oplus \k(A) \to \Gw_\top(A)$ also induces an isomorphism on $\pi_n$ as claimed.
\end{proof}

\begin{Rmk}\label[Rmk]{Remark:alternative-argument-for-KK-invariance}
One can also give a direct argument for a natural equivalence $\tau_{\geq 0} \Gw_\top(A) \simeq \k(A) \oplus \k(A)$, see \cite[Theorem 2.3]{Karoubi} for the version on homotopy groups. Informally, the map from right to left is obtained as follows: First, one shows that $\tau_{\geq 0} \Gw_\top(A)$ is the group completion of the topological category $\Unimod(A)$ of unimodular hermitian forms over $A$, see e.g.\ \cite[Corollary A.2]{Schlichting}. Then one shows that the functor $\Proj(A) \times \Proj(A) \to \Unimod(A)$, given by sending $(P_1,P_2)$ to $(P_1\oplus P_2, \sigma^\pos\oplus \sigma^\neg)$ is an equivalence of topological categories; here, $\sigma^\pos$ denotes the canonical positive definite form on $P_1$ and $\sigma^\neg$ its negative definite variant. The main statement here is to see that the group of isometries of $(P,\sigma^\pos)$ is homotopy equivalent to the group of isomorphisms of $P$; a shadow of this fact is \cite[Lemma 2.9]{Karoubi}.

This perspective shows that the equivalence in fact holds more generally for $C$-algebras in the sense of \cite[Definition 2.2]{Karoubi}, but we shall not make use of this fact in this paper.

Having this equivalence, one deduces that $\tau_{\geq0} \Gw$ is KK-invariant. From the fibre sequence
\[ \k_{hC_2} \lto \tau_{\geq 0}\Gw_\top \lto \tau_{\geq 0} \L \]
it then follows that $\tau_{\geq 0}\L$, and therefore by periodicity also $\L$, is also KK-invariant. However, this perspective does not immediately give a proof of \cref{ThmC}. We have  decided to deduce the description of $\Gw_\top$ in the way presented rather than showing the equivalence $\tau_{\geq 0}\Gw_\top(A) \simeq \k(A) \oplus \k(A)$ by hand, which might in fact be the more natural thing to do.
\end{Rmk}

\section{Proof of Theorem \ref{ThmA} \& \ref{ThmB}}

In this section, we prove \cref{ThmA} and \cref{ThmB} from the introduction. 
Again, we recall the statements here for convenience. We emphasize at this point that the proofs of Theorem \ref{ThmA} and \ref{ThmB} rely only on the consequence of \cref{ThmC} that L-theory is a KK-invariant functor, not on \cref{ThmC} itself. In particular, Theorems \ref{ThmA} and \ref{ThmB} can also be derived using the argument outlined in \cref{Remark:alternative-argument-for-KK-invariance}. This approach makes no use of the fact that $\L\adjt$ is KK-invariant, which was deduced in \cite{LN} from the fact that $\L$ is KK-invariant on \emph{complex} $C^*$-algebras, which in turn was proven by using that \cref{ThmB} was known previously for complex $C^*$-algebras as indicated in the introduction \cite{Karoubi, Miller, Rosenberg}. 

\begin{Thm}\label[Thm]{Thm:existence+uniqueness-of-tau}
There is a unique lax symmetric monoidal transformation $\tau \colon \k \to \ell$ and the induced maps
\[ \k(A) \otimes_{\ko} \ell(\R) \lto \ell(A) \quad \text{ and } \quad \k(A) \otimes_\ko \L(\R) \lto \L(A) \]
are equivalences for each $C^*$-algebra $A$.
\end{Thm}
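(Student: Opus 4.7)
My strategy has three steps: construct $\tau$, establish the equivalence formula, and then deduce uniqueness from the formula.

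\emph{Construction of $\tau$.} Using the identification $\tau_{\geq 0}\Gw_\top \simeq \k \oplus \k$ from \cref{Cor:GW_top}, I would take $\tau$ to be the composite
\[ \k \lto \tau_{\geq 0}\Gw_\top \lto \Gw_\top \lto \L, \]
where the first arrow is the inclusion of the positive-definite summand $P \mapsto (P,\sigma^\pos)$, and the last arrow is the canonical map from the fibre sequence $\K_{hC_2} \to \Gw_\top \to \L$. Each map is lax symmetric monoidal (inclusion as a direct ring summand, resp.\ the quotient map of hermitian/L-theory), so $\tau$ is too; in particular $\tau_\R\colon \ko \to \L(\R)$ is then an $\E_\infty$-ring map, and $\L(A)$ is canonically an $\L(\R)$-module.

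\emph{The equivalence formula.} Both $F(A) = \k(A) \otimes_\ko \L(\R)$ and $G(A) = \L(A)$ are KK-invariant, $\L(\R)$-linear, lax symmetric monoidal functors on $\KK$ (using \cref{Cor:KK-invariance} for $G$), agreeing on $\R$, and $\tau$ induces a natural transformation $F \to G$ which is the identity at $\R$. By \cref{Cor:GW_top}, $\Gw_\top$ satisfies base-change from $\R$, i.e.\ $\Gw_\top(A) \simeq \k(A) \otimes_\ko \Gw_\top(\R)$ naturally in $A$. Combined with the natural fibre sequence $\K(A)_{hC_2} \to \Gw_\top(A) \to \L(A)$, the equivalence $F \xto{\sim} G$ reduces to proving the analogous base-change property for $\K_{hC_2}$. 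I would verify this by tracking the $C_2$-action on $\K(A)$ by duality through the $\ko$-linear structure of $\k(A)$, using that $\R$ has trivial involution so that the action on $\ko = \k(\R)$ is trivial.

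\emph{Uniqueness.} Given the formula, any lax symmetric monoidal transformation $\tau\colon \k \to \L$ is determined by its $\E_\infty$-ring component $\tau_\R\colon \ko \to \L(\R)$ via the induced equivalence $\k(-) \otimes_\ko \L(\R) \simeq \L(-)$. I would then establish uniqueness of the $\E_\infty$-ring map $\ko \to \L(\R)$ by an obstruction-theoretic argument: the target has $\pi_*\L(\R) \cong \Z[\beta^{\pm 1}]$ concentrated in degrees $\equiv 0 \pmod 4$, and the Postnikov tower of $\L(\R)$ together with the cell structure of $\ko$ renders the obstruction groups trivial.

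\emph{Main obstacle.} The main technical difficulty is proving the equivalence formula, specifically establishing the base-change property for $\K_{hC_2}$: the duality $C_2$-action on $\K(A)$ is non-trivial for algebras with non-trivial involution (e.g.\ $A = \C$ where it is complex conjugation), so this statement is not formal and requires a careful analysis of the equivariant structure on $\k$. A viable alternative I would keep in reserve is an arithmetic fracture decomposition, using \cref{ThmC} to obtain the formula 2-adically (via $\L^\cwedge_2 \simeq \k^{tC_2}$) and the results of \cite{LN} to obtain it after inverting 2, with rational compatibility deduced from the uniqueness on rationalisations.
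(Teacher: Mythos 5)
Your proposal follows the same broad outline as the paper (the fibre sequence $\k_{hC_2} \to \tau_{\geq 0}\Gw_\top \to \tau_{\geq 0}\L$ together with $\tau_{\geq 0}\Gw_\top \simeq \k \oplus \k$), but both of the steps you flag as the crux have genuine gaps, and one of your intuitions about where the difficulty lies is actually misplaced.

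On uniqueness: the paper does not argue by obstruction theory against $\E_\infty$-maps $\ko \to \L(\R)$. Instead it uses KK-invariance (\cref{Cor:KK-invariance}) to view $\L$ as an object of $\Alg(\Fun(\KK,\Sp))$ under Day convolution, whose monoidal unit is precisely $\k = \map_{\KK}(\R,-)$; uniqueness of $\tau$ is then the uniqueness of the map out of a unit object. Your reduction to uniqueness of $\tau_\R \colon \ko \to \L(\R)$ is not justified: to pass from the $\R$-component of a lax symmetric monoidal transformation to the whole transformation you need exactly the kind of corepresentability/Day-convolution argument the paper makes, so the reduction is essentially equivalent to the claim you are trying to prove. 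Moreover the obstruction groups would need to be computed against the connective $\ell(\R)$, not $\L(\R)$, and you have not done so.

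On the equivalence formula: your stated worry, that the duality $C_2$-action on $\K(A)$ is non-trivial for $A = \C$ with complex conjugation, is incorrect and in fact runs opposite to the key point of the proof. The duality action $P \mapsto P^\vee$ on $\K(\C)$ is trivial (on $\mathrm{GL}_n(\C)$ it is $g \mapsto (g^*)^{-1}$, which restricts to the identity on $U(n)$); you may be conflating it with the Real $C_2$-action on $\KU$ of Atiyah type, which is a different thing. The paper's crucial observation is precisely that the duality $C_2$-action on $\k$ \emph{as a functor on $\KK$} is trivial, because $\k$ is corepresented by $\R$ and the action on $\ko$ is trivial (equivalently, $\ko$ has contractible space of $\E_\infty$-self-maps). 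Once this triviality is established, the hyperbolic map $\hyp \colon \k_{hC_2} \to \k\oplus\k$ is a map between functors lying in the image of $\Fun(BC_2,\KK^\op)\hookrightarrow \Fun(BC_2,\Fun(\KK,\Sp))$, so by the Yoneda lemma it is uniquely determined by the $C_2$-equivariant map of $\ko$-modules obtained by evaluating at $\R$. This gives $\hyp(A) \simeq \id_{\k(A)} \otimes_\ko \hyp(\R)$ directly, and the formula follows by taking cofibres. Your plan of ``tracking the $C_2$-action through the $\ko$-linear structure'' gestures in this direction but lacks the corepresentability/Yoneda step that makes the argument actually work, and as you note yourself you do not carry it out. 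Your fracture-square alternative has the same issue: to identify $\k(A)^{tC_2}$ with $\k(A)\otimes_\ko \ko^{tC_2}$ you would again need to know the $C_2$-action on $\k(A)$ is trivial naturally in $A$.

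Finally, the paper constructs $\tau$ directly as the unit map in $\Alg(\Fun(\KK,\Sp))$ (not via $\Gw_\top$), and shows only afterwards, in a remark following the proof, that this agrees with the composite $\k \to \tau_{\geq 0}\Gw_\top \to \ell$ you propose; that identification itself invokes the uniqueness statement.
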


\begin{proof}[Proof of \cref{Thm:existence+uniqueness-of-tau}]
Since the canonical map $\ell(A) \otimes_{\ell(\R)} \L(\R) \to \L(A)$ is an equivalence, the second displayed map is obtained from the first by applying the functor $- \otimes_{\ell(\R)} \L(\R)$. It therefore suffices to prove that the first of the two displayed maps is an equivalence\footnote{The statement that the first map is an equivalence is equivalent to the statement that the second map is an equivalence \emph{and} the statement that $\k(A) \otimes_\ko \tau_{<0} \L(\R)$ is coconnected. Using the Whitehead filtration of $\tau_{<0}\L(\R)$ which has graded pieces given by $\Z[4k]$ for $k \leq -1$, and the presentation $\Z = (\ko/\eta)/\beta_\C$ one finds that $\k(A) \otimes_\ko \tau_{<0}\L(\R)$ is indeed coconnected, so the two statements of \cref{Thm:existence+uniqueness-of-tau} are in fact equivalent.}

By the results of the previous section, we know that we may view $\L$ as a functor $\KK \to \Sp$. As such, it is canonically lax symmetric monoidal, because L-theory is lax symmetric monoidal on $C^*$-algebras. In other words, $\L$ is canonically an object of $\Alg( \Fun(\KK,\Sp))$ where algebras are formed with respect to the Day convolution symmetric monoidal structure on $\Fun(\KK,\Sp)$. As such it receives a unique algebra map from the unit, which is given by the functor $\map_\KK(\R,-) \simeq \k$. 
Therefore, as in \cite{LN} there is a unique lax symmetric monoidal transformation $\tau \colon \k \to \L$. 

We now consider the cofibre sequence
\begin{equation} \label{sequence-hyp}
	\k_{hC_2} \stackrel{\hyp}{\lto} \tau_{\geq 0} \Gw_\top \lto \tau_{\geq 0} \L, 
\end{equation}
see \cref{Cor:fibre-sequence-top}, and identify $\tau_{\geq 0}\Gw_\top$ with $\k\oplus \k$ using \cref{Cor:GW_top}. First, we show that the $C_2$-action on $\k$ is trivial: 
To this end we note that $\k$ is corepresented by the tensor unit $\R$ of $\KK$ and that the action is lax symmetric monoidal, so the action is equivalently given by an action on the tensor unit $\R$ in $\KK$. Since the unit in any symmetric monoidal $\infty$-category is the initial commutative algebra object, there is exactly one such action which is thus necessarily the trivial action.
We deduce that
under the equivalence $\tau_{\geq0}\Gw_\top \simeq \k \oplus \k$, the map $\hyp \colon \k_{hC_2} \to \k \oplus \k$ 
can equivalently be described by a $C_2$-equivariant map $r^*(\k) \to r^*(\k\oplus \k)$ where $r\colon BC_2 \to \ast$ is the unique map. 
Therefore, the map $\hyp$ is equivalently described by a map in the category $\Fun(\KK,\Fun(BC_2,\Sp)) \simeq \Fun(BC_2,\Fun(\KK,\Sp))$. The Yoneda Lemma induces the fully faithful inclusion
\[ \Fun(BC_2,\KK^\op) \lto \Fun(BC_2,\Fun(\KK,\Sp)) \]
and the map $\hyp$ is a map between objects in the image. Therefore, the map $\hyp$ in the fibre sequence \eqref{sequence-hyp} is uniquely determined by the associated element  of
\[
\Map(BC_2, \Omega^\infty(\ko \oplus \ko)) 
\]
corresponding to $\hyp(\R)$ evaluated on the element $1 \in \Omega^\infty(\ko)$. 
Similarly, we consider the map 
\[ \k(A) \otimes_\ko \ko_{hC_2} \xrightarrow{\k(A)\otimes_{\ko}\hyp(\R)} \k(A) \otimes_\ko (\ko \oplus \ko) \ . \]
Source and target are canonically equivalent to source and target of our map $\hyp$ and thus this map is also determined by an element in the space $\Map(BC_2, \Omega^\infty(\ko \oplus \ko))$. By construction these two elements in this space agree, since for $A = \R$, the two maps under investigation agree.
So we deduce that the map $\hyp(A)$ identifies with the map $\id_{\k(A)}\otimes_\ko \hyp(\R)$. Therefore, we deduce that 
\[ \ell(A)= \cofib(\hyp(A)) = \k(A) \otimes_\ko \cofib(\hyp(\R)) = \k(A)\otimes_\ko \ell(\R) \]
as claimed. To see that the map is the one we claimed, it suffices to note that the induced map
\[ \k(A) \lto \k(A) \otimes_\ko \ell(\R) \stackrel{\simeq}{\lto} \ell(A) \]
is natural in $A$ and for $A=\R$ agrees with the map $\tau_\R \colon \ko \to \ell(\R)$.
\end{proof}

\begin{Rmk} 
Let us consider the following commutative diagram
\begin{equation}\label{diag:interpretation-of-tau}
\begin{tikzcd}
	\k \ar[r,dashed, "\Delta"] \ar[d] & \k \oplus \k \ar[r,"\ominus"] \ar[d,"\simeq"] & \k \ar[d, dashed, "\hat{\tau}"] \\
	\k_{hC_2} \ar[r,"\hyp"] & \tau_{\geq0}\Gw_\top \ar[r] & \ell
\end{tikzcd}
\end{equation}
where the left vertical map is is the canonical projection map and the middle vertical map is the equivalence of \cref{Remark:alternative-argument-for-KK-invariance}. As a consequence, the (induced) map $\Delta$ is indeed the diagonal map. We obtain a canonical map $\hat{\tau}$ induced on horizontal cofibres. Now we may consider the composite 
\[\k \stackrel{(\id,0)}{\lto} \k \oplus \k \stackrel{\simeq}{\lto} \tau_{\geq0}\Gw_\top \]
considered as a map from the top right term in the above diagram. This map has the following interpretation: It arises by observing that $\k(A)$ can be described as the K-theory of the topological category of positive definite forms on projective $A$-modules. The canonical inclusion to the category of all unimodular forms then induces the map $\k \to \tau_{\geq0}\Gw_\top$ just explained. With this interpretation, one sees that this map is canonically a lax symmetric monoidal transformation. Using that also the map $\Gw_\top \to \L$ is lax symmetric monoidal, see \cite{CDHIV} for a general statement along these lines, we find that the composite 
\[ \k \to \tau_{\geq0}\Gw_\top \to \ell \]
on the one hand agrees with $\hat{\tau}$ (by construction) and is canonically lax symmetric monoidal. By the uniqueness part of \cref{Thm:existence+uniqueness-of-tau}, we deduce that $\hat{\tau} = \tau$.
By expanding out vertical (co)fibres of diagram \eqref{diag:interpretation-of-tau}, we find that $\ell$ is described as the cofibre of a transformation
\[ \widetilde{\k_{hC_2}} \lto \k \]
where the tilde denotes reduced $C_2$-orbits, i.e.\ the cofibre of the projection map $\k \to \k_{hC_2}$. This transformation is, similarly as before, determined by its induced map $\widetilde{\ko_{hC_2}} \to \ko$. A natural guess is that this map is given as follows. We recall that the $C_2$-action on $\ko$ is trivial, so that the above map is equivalently described by a map $BC_2 \to \End(\ko)$, landing in the component of the zero map. We can then consider the canonical map 
\[ BC_2 \lto \gl_1(\ko) \subseteq \End(\ko) \]
which lands in the component of the identity, and shift it to the component of the zero map using the additive structure on $\End(\ko)$. We note that precomposing this map with the canonical map $B\Z \to BC_2$, we obtain a map $\Sigma \ko \to \ko$ which is given by the multiplication by $\eta$. This would be compatible with the discussion at the end of this section, but we refrain from attempting to prove that the map $\widetilde{\ko_{hC_2}} \to \ko$ is indeed given by this construction.
\end{Rmk}

Next, we aim to prove \cref{ThmB} from the introduction. Following standard topological notation we write $\KSp = \K(\H)$ for the topological $\K$-theory spectrum of the quaternions and denote by $\ksp$ its connective cover.  As a further preparation we denote by $\tilde{\eta} \colon \Sigma\ko/2 \to \ko$ a $\ko$-linear extension of the $\eta$-multiplication map $\eta\colon \Sigma \ko \to \ko$ to $\Sigma \ko/2$. Such an extension exists as $2\eta =0$, but is not unique.\footnote{Up to homotopy, there are two extensions since $\pi_2(\ko) = \Z/2$.} Regardless which extension is chosen, we have the following symplectic analogue of Wood's theorem -- recall that Wood's theorem states that $\cofib(\eta) = \ku$ or equivalently that 
the cofibre of the periodic version $\eta: \Sigma \KO \to \KO$ is $\KU$. For a proof of Wood’s theorem see \cite[Theorem 3.2]{AkhilWood}, but note
that the argument was cut from the published version \cite{AkhilnoWood}.
\begin{Lemma}\label[Lemma]{Lemma:KSp-presentation}
There is an equivalence $\cofib(\tilde{\eta}) \simeq \ksp$.
\end{Lemma}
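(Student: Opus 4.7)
The plan is to construct a canonical map $\Phi \colon \cofib(\tilde\eta) \to \ksp$ and verify it is an equivalence by comparing homotopy groups.

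To produce $\Phi$, I would start from the canonical $\ko$-linear map $h \colon \ko \to \ksp$ given by extension of scalars along $\R \hookrightarrow \H$ (equivalently, the unit of $\ksp$ as a $\ko$-algebra) and show that $h \circ \tilde\eta$ is null. Fixing a $\ko$-linear extension $\tilde\eta$ (essentially unique, as the torsor of such extensions is over $[\Sigma^2\ko, \ko]_{\ko\text{-mod}} = \pi_{-2}(\ko) = 0$), the composite $h \circ \tilde\eta$ is $\ko$-linear and so lies in
\[
[\Sigma\ko/2, \ksp]_{\ko\text{-mod}} \;=\; \pi_0\,\fib\big(\Sigma^{-1}\ksp \xrightarrow{2} \Sigma^{-1}\ksp\big),
\]
which vanishes because $\pi_1(\ksp) = \pi_2(\ksp) = 0$. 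The universal property of cofibers then yields the desired $\Phi$.

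I would then check that $\Phi$ is an equivalence using the long exact sequence of the cofiber sequence $\Sigma\ko/2 \xrightarrow{\tilde\eta} \ko \to \cofib(\tilde\eta)$. The explicit description of $\pi_*(\ko/2)$ together with the identification of $\tilde\eta_*$ (acting as $\eta$-multiplication on the mod-$2$ reduction part and vanishing on the Bockstein-lift part whenever the target is torsion-free) yields the expected values of $\pi_*(\cofib(\tilde\eta))$ in each degree, matching $\pi_*(\ksp)$ — except that the LES leaves open a short exact sequence $0 \to \Z \to \pi_4(\cofib(\tilde\eta)) \to \Z/4 \to 0$ and its analogues in each degree $\equiv 4 \pmod 8$.

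The main obstacle is pinning down these extensions. The map $\Phi$ resolves them: it sends the $\Z$-subgroup (image of $\pi_4(\ko)$) to $4$ times a generator of $\pi_4(\ksp) = \Z$, because $h$ on $\pi_4$ is multiplication by $4$. This is visible by writing $\KO \to \KSp = \Sigma^4 \KO$ as multiplication by the generator $\alpha\beta^{-1} \in \pi_{-4}(\KO)$ and computing $\alpha \cdot \alpha\beta^{-1} = \alpha^2\beta^{-1} = 4$. The non-trivial extension is thus forced, yielding $\pi_4(\cofib(\tilde\eta)) \cong \Z$; Bott periodicity propagates the argument to all higher degrees and concludes that $\Phi$ is an equivalence.
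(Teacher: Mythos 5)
Your construction of the map $\Phi$ is clean and correct — showing $[\Sigma\ko/2,\ksp]_{\ko\text{-mod}}=0$ from $\pi_1(\ksp)=\pi_2(\ksp)=0$ and invoking the cofibre universal property is a legitimate, different route from the paper, which instead identifies $\pi_*(C)$ as a free $\KO_*$-module of rank one directly (the paper uses essentially your map-construction idea only later, in \cref{Cor:3-truncation-of-L}). The computation that $h\colon \ko\to\ksp$ is multiplication by $4$ on $\pi_4$, via the generator $\alpha\beta^{-1}\in\pi_{-4}(\KO)$ and $\alpha^2\beta^{-1}=4$, is also correct. Two smaller remarks: you only treat the (unique) $\ko$-linear extension $\tilde\eta$, whereas the lemma as stated in the paper allows an arbitrary spectrum-level extension; and for the cofibre universal property to produce a canonical $\Phi$ you should also observe $[\Sigma^2\ko/2,\ksp]_{\ko}=0$, which again follows from $\pi_2(\ksp)=\pi_3(\ksp)=0$.

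The genuine gap is the sentence ``The non-trivial extension is thus forced.'' Knowing that the composite
$\pi_4(\ko)\to\pi_4(C)\xrightarrow{\Phi_*}\pi_4(\ksp)$
is multiplication by $4$ tells you only that $\Phi_*$ maps the $\Z$-subgroup $\iota_*\pi_4(\ko)\subseteq\pi_4(C)$ onto $4\Z\subseteq\Z$. It does \emph{not} determine the abelian group $\pi_4(C)$, nor does it show $\Phi_*$ is injective or surjective on $\pi_4$. For instance, the split possibility $\pi_4(C)\cong\Z\oplus\Z/4$ with $\Phi_*(a,t)=4a$ is entirely consistent with everything you have established; in that case $\Phi$ would fail to be an equivalence. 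So the extension class in $\Ext^1(\Z/4,\Z)\cong\Z/4$ must be pinned down by an independent argument, and you have not supplied one. One concrete way to close the gap with the tools you already have in play: the commuting square
\[
\begin{tikzcd}
\Sigma\ko \ar[r,"\eta"] \ar[d] & \ko \ar[d,equal] \\
\Sigma\ko/2 \ar[r,"\tilde\eta"] & \ko
\end{tikzcd}
\]
induces a $\ko$-linear map $\ku=\cofib(\eta)\to C=\cofib(\tilde\eta)$ through which $\iota\colon\ko\to C$ factors. Since $c(x)=2\beta_\C^2$ under complexification $\ko\to\ku$, the generator $\iota_*(x)$ of the subgroup $\Z\subseteq\pi_4(C)$ is therefore \emph{divisible by $2$} in $\pi_4(C)$; a short check then forces the extension class to be a unit in $\Z/4$, hence $\pi_4(C)\cong\Z$, after which your $\Phi_*$ argument does show it is an isomorphism. (This is morally what the paper's appeal to the Wood sequence $\Sigma\KO/2\to\KO/2\to\KU/2$ and $\eta$-surjectivity is buying.) Without some such input, the final step of your proof does not go through.
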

\begin{proof}
Since $\KSp \simeq \Sigma^4 \KO$, we may equivalently show that there is a fibre sequence
\[ \Sigma \KO/2 \stackrel{\tilde{\eta}}{\lto} \KO \lto \Sigma^4 \KO.\]
We will deduce this from the periodic version of Wood's theorem. To this end, consider the commutative diagram
\[\begin{tikzcd}
	\Sigma \KO \ar[r] \ar[d, "\cdot 2"] & 0 \ar[d]\ar[r] & \Sigma^2 \KO\ar[d,dashed] \\
	\Sigma \KO \ar[r, "\eta"] \ar[d]\ar[d]&  \KO \ar[r, "c"]\ar[d,"\mathrm{id}"] & \KU\ar[d, dashed] \\
	\Sigma \KO/2 \ar[r, "\tilde{\eta}"] & \KO \ar[r] & C
\end{tikzcd}\]
where $C = \cofib(\tilde{\eta})$. The upper left square is filled by the choosen nullhomotopy of $2\eta$ and the map $c\colon \KO \to \KU$ is the complexification map that sends $1$ to $1$.
The vertical and horizontal sequences are all fibre sequences, the middle horizontal one by Wood's theorem, the rest by definition. The right vertical sequence is obtained from the left part of the diagram by forming horizontal cofibres. In particular we see that $C$ is the cofibre of the map $\Sigma^2\KO \dashrightarrow\KU$. By construction, this map is $\KO$-linear, since all maps and homotopies in the diagram are $\KO$-linear. Thus the map is determined by its value on the generator $1 \in \pi_2(\Sigma^2\KO) = \pi_0(\KO)$. By looking at the long exact sequences associated to the horizontal fibre sequences we see that this generator is sent to a generator in $\pi_2(\KU) \cong \mathbb{Z}$. Thus (after postcomposing with the invertible and $\KO$-linear map given by multiplication with this generator) the map $\Sigma^2\KO \dashrightarrow \KU$ is equivalent to $\Sigma^2$ of the complexification map $c\colon \KO \to \KU$. By another application of Wood's theorem, we deduce that the cofibre $C$ is given by $\Sigma^4 \KO$ as needed.
\end{proof}

\begin{Cor}\label[Cor]{Cor:3-truncation-of-L}
There is a $\ko$-linear map $\ksp \to \ell(\R)$ which induces an isomorphism on $\pi_0$ and consequently an equivalence $\tau_{\leq 3} \ksp \simeq \tau_{\leq 3} \ell(\R)$ on Postnikov 3-truncations.
\end{Cor}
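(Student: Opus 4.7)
The plan is to construct $\ksp \to \ell(\R)$ as a $\ko$-linear factorisation of $\tau_\R \colon \ko \to \ell(\R)$ through the symplectic Wood presentation $\ko \to \ksp \simeq \cofib(\tilde\eta)$ supplied by \cref{Lemma:KSp-presentation}, and to deduce the equivalence on $3$-truncations from the classical low-degree vanishing of the symmetric $\L$-groups of $\R$.

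To produce the $\ko$-linear extension, it suffices to show that the composite $\tau_\R \circ \tilde\eta \colon \Sigma\ko/2 \to \ell(\R)$ is null as a map of $\ko$-modules. The obstruction lives in $\pi_0 \map_\ko(\Sigma\ko/2, \ell(\R)) \cong \pi_2(\ell(\R)/2)$, which fits into the universal coefficient sequence
\[ 0 \longrightarrow \pi_2(\ell(\R))/2 \longrightarrow \pi_2(\ell(\R)/2) \longrightarrow \pi_1(\ell(\R))[2] \longrightarrow 0. \]
By the classical computation of the low-dimensional symmetric $\L$-groups of $\R$, one has $\pi_1(\ell(\R)) = \L_1(\R) = 0$ (odd symmetric $\L$-groups of a field of characteristic $\neq 2$ vanish) and $\pi_2(\ell(\R)) = \L_2(\R) = 0$ (the Witt group of non-degenerate skew-symmetric forms over a field is trivial). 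Hence both outer terms vanish, the obstruction is null, and the desired $\ko$-linear lift $\ksp \to \ell(\R)$ exists.

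By construction, the composite $\ko \to \ksp \to \ell(\R)$ equals $\tau_\R$. The long exact sequence associated to the cofibre sequence $\Sigma\ko/2 \xrightarrow{\tilde\eta} \ko \to \ksp$, combined with the connectivity of $\ko/2$, shows that $\ko \to \ksp$ is an isomorphism on $\pi_0$. Since $\tau_\R$ is also a $\pi_0$-isomorphism (the unit of $\ko$ maps to the class of the symmetric form $\langle 1\rangle$ generating $\L_0(\R) = \Z$), the lifted map $\ksp \to \ell(\R)$ is a $\pi_0$-isomorphism as well.

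For the equivalence on $3$-truncations, the identification $\ksp = \tau_{\geq 0}\Sigma^4 \KO$ gives $\pi_k(\ksp) = 0$ for $k = 1, 2, 3$, while the classical vanishing $\L_1(\R) = \L_2(\R) = \L_3(\R) = 0$ gives $\pi_k(\ell(\R)) = 0$ in the same range. A $\pi_0$-isomorphism between connective spectra whose $\pi_1, \pi_2, \pi_3$ both vanish is automatically a $\tau_{\leq 3}$-equivalence. The main obstacle in this plan is the vanishing of the obstruction in $\pi_2(\ell(\R)/2)$, which I would treat via the universal coefficient sequence together with classical low-dimensional Witt-group computations rather than by any geometric identification of $\tilde\eta$.
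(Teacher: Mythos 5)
The proposal is correct and follows essentially the same route as the paper: both reduce, via the presentation $\ksp \simeq \cofib(\tilde\eta)$ from \cref{Lemma:KSp-presentation}, to showing that $\tau_\R \circ \tilde\eta \colon \Sigma\ko/2 \to \ell(\R)$ is $\ko$-linearly null, and both identify the obstruction group $\pi_0\Map_\ko(\Sigma\ko/2,\ell(\R)) \cong \pi_2(\ell(\R)/2)$ and observe it vanishes because $\L_1(\R) = \L_2(\R) = 0$. Your universal coefficient sequence is simply a spelled-out version of the paper's observation that $\Omega^{\infty+2}\ell(\R)/2$ is connected, and your explicit verification of the $\pi_0$-iso and the vanishing of $\pi_1,\pi_2,\pi_3$ on both sides fills in what the paper leaves to the reader.
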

\begin{proof}
Since the canonical map $\ko \to \ksp$ induced by the map $\R \to \H$ is a $\pi_0$ isomorphism, any extension of $\tau\colon \ko \to \ell(\R) $ along $\ko \to \ksp$ is also a $\pi_0$ isomorphism.
Therefore, by \cref{Lemma:KSp-presentation}, it suffices to show that the composite 
\[ \Sigma \ko/2 \stackrel{\tilde{\eta}}{\lto} \ko \stackrel{\tau}{\lto} \ell(\R) \]
is $\ko$-linearly null-homotopic. But we have 
\[\Map_\ko(\Sigma \ko/2,\ell(\R)) \simeq \Map_\ko(\Sigma \ko,\Omega \ell(\R)/2) \simeq \Omega^{\infty + 2} \ell(\R)/2 \]
which is connected. 
The final equivalence on 3-truncations follows because both spectra have $\pi_1 = \pi_2 = \pi_3 = 0$.
\end{proof}

We are now ready to prove \cref{ThmB} from the introduction, which we state here in a form better suited for describing the comparison map $\tau$ on homotopy groups in all non-negative degrees, see \cref{Rem:induced-map}. We denote by $\K_*(A)[\eta]$ the $\eta$-torsion of $\K_*(A)$, that is, the kernel of the map $\K_*(A) \to \K_{*+1}(A)$ given by multiplication by $\eta$. The cokernel of this map is denoted by  $\K_{*+1}(A)/\eta$.

\begin{Thm}\label[Thm]{Thm:natural-ThmB}
Let $A$ be a $C^*$-algebra. For all $n \in \Z$, there are canonical and natural isomorphisms
\begin{enumerate}
\item $\L_{4n}(A) \cong \K_{8n}(A)$, 
\item $\L_{4n+1}(A) \cong  \K_{8n+1}(A) / \eta$,
\item $\L_{4n+2}(A) \cong \K_{8n+6}(A)[\eta]$, and
\item $\L_{4n+3}(A) \cong \K_{8n+7}(A)$.
\end{enumerate}
\end{Thm}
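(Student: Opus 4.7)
The plan is to reduce to computing the groups $\L_0,\L_1,\L_2,\L_3$ and then identify them directly. Since $\L(\R)$ is $4$-periodic (with generator the Bott element $b\in\L_4(\R)$) and $\L(A)\simeq\k(A)\otimes_\ko\L(\R)$ by \cref{ThmA}, the spectrum $\L(A)$ is itself $4$-periodic. Combined with Bott $8$-periodicity of real topological $\K$-theory, this means that all the stated isomorphisms for general $n$ follow from the cases $n=0$.

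Next, using \cref{ThmA} together with \cref{Cor:3-truncation-of-L}, we obtain a canonical identification
\[
\tau_{\leq 3}\ell(A)\;\simeq\;\tau_{\leq 3}\bigl(\k(A)\otimes_\ko\ksp\bigr).
\]
Indeed, the $\ko$-linear map $\ksp\to\ell(\R)$ of \cref{Cor:3-truncation-of-L} has $3$-connected fibre, and tensoring a $3$-connected $\ko$-module with the $(-1)$-connective $\ko$-module $\k(A)$ again yields a $3$-connected $\ko$-module over the connective ring $\ko$. So the problem reduces to computing $\pi_n$ of $\k(A)\otimes_\ko\ksp$ for $n\in\{0,1,2,3\}$.

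To compute these, I would apply $\k(A)\otimes_\ko-$ to the cofibre sequence $\Sigma\ko/2\xrightarrow{\tilde\eta}\ko\to\ksp$ of \cref{Lemma:KSp-presentation}, producing
\[
\Sigma\k(A)/2\xrightarrow{\tilde\eta}\k(A)\to\k(A)\otimes_\ko\ksp.
\]
Combining the resulting long exact sequence on homotopy groups with the Bockstein short exact sequences $0\to\K_n(A)/2\to\pi_n(\k(A)/2)\to\K_{n-1}(A)[2]\to 0$, and using that the restriction of $\tilde\eta$ to the $\K_{n-1}(A)/2$-summand is the well-defined reduction of $\eta$-multiplication (since $2\eta=0$), one reads off immediately that $\L_0(A)=\K_0(A)$ and $\L_1(A)=\K_1(A)/\eta$.

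The main obstacle is the computation of $\L_2$ and $\L_3$, where the map $\tilde\eta$ on the $\K_{n-1}(A)[2]$-summand is a secondary operation not given by $\eta$-multiplication, and where an extension problem arises in $\pi_2$. To resolve both issues, I would exploit naturality with the Wood cofibre sequence $\Sigma\k(A)\xrightarrow{\eta}\k(A)\to\k(A_\C)$, which maps into our cofibre sequence via the canonical projection $\Sigma\k(A)\to\Sigma\k(A)/2$. Tracking the induced maps of long exact sequences, together with the five-term Wood exact sequence in the remark after \cref{ThmB}, identifies the two groups with $\coker(c\colon\K_0(A)\to\K_0(A_\C))$ and $\K_{-1}(A)$ respectively; Bott $8$-periodicity then produces the formulas $\K_{6}(A)[\eta]$ and $\K_7(A)$. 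Iterating the resulting isomorphisms via $4$-periodicity of $\L$-theory and $8$-periodicity of $\K$-theory yields all four parts of the statement.
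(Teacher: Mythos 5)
Your reduction to $n=0$ by $4$-periodicity of $\L$ and $8$-periodicity of $\K$, the reduction to $\tau_{\leq 3}\big(\k(A)\otimes_\ko\ksp\big)$ via \cref{Cor:3-truncation-of-L} and \cref{ThmA}, and the treatment of $\pi_0$ and $\pi_1$ via the cofibre sequence $\Sigma\ko/2\to\ko\to\ksp$ all match the paper's argument. The problem is in your treatment of $\pi_2$ and $\pi_3$, where there is a real gap.

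First, the sequence $\Sigma\k(A)\xrightarrow{\eta}\k(A)\to\k(A_\C)$ that you invoke is \emph{not} a cofibre sequence for general $A$. The cofibre of $\eta$ on $\k(A)$ is $\k(A)\otimes_\ko\ku$, and the natural map $\k(A)\otimes_\ko\ku\to\k(A_\C)$ need not be an equivalence: already for $A=S^2\R$ one finds $\pi_0\big(\k(A)\otimes_\ko\ku\big)=\K_0(A)=\K_6(\R)=0$ whereas $\K_0(A_\C)=\K_{-2}(\C)\cong\Z$. The generalised Wood sequence is a statement about periodic $\KO$- and $\KU$-modules, and it does not pass to connective covers naively. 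So the comparison diagram you propose to map into the $\ksp$-sequence does not exist in the form you describe.

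Second, even setting that aside, the claim that ``tracking the long exact sequences identifies the two groups'' is where all the actual work is, and you have not explained how to do it. Concretely, $\pi_2\big(\k(A)\otimes_\ko\ksp\big)$ sits in an extension between a quotient of $\K_2(A)$ by the image of a genuine secondary operation $\tilde\eta$ on $\pi_1(\k(A)/2)$ and the kernel of $\tilde\eta$ on $\K_0(A)/2$; the putative Wood map on sources is only the reduction $\Sigma\k(A)\to\Sigma\k(A)/2$, which compares a sub-piece of the top row to the bottom one and does not resolve either the secondary operation or the extension. The paper gets around exactly this by a different device: it introduces the $\KO$-module $M=\K(A)$, forms the nine-term diagram comparing $\tau_{\geq0}M\otimes_\ko\ksp$, $M\otimes_{\KO}\KSp\simeq\Sigma^4M$ and $\tau_{<0}M\otimes_\ko\ksp$, deduces that the error term $C$ is $2$-truncated with $\pi_2(C)=\pi_{-1}(M)[2]$, and so obtains an exact sequence $0\to\pi_2\to\pi_{-2}(M)\to\pi_{-1}(M)$. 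That last map is a natural transformation $\pi_{-2}\to\pi_{-1}$ on $\KO$-modules, hence by a Yoneda argument is either $0$ or multiplication by $\eta$, and a single test module ($M=\KO[-3]$) rules out the trivial option. You would need some version of these two steps (the ``pass to periodic and bound the error term'' step and the ``naturality plus one example'' step), neither of which is supplied by the Wood comparison you sketch.
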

\begin{proof}
First, we note that it suffices to prove the theorem for $n=0$, as the L-groups are naturally 4-periodic and the K-groups are naturally 8-periodic. Moreover, we note that the periodicity generators $b \in \L_4(\R)$ and $\beta_\R \in \K_8(\R)$ are canonical (not only up to sign), for instance because they are determined by squares in $\L_4(\C)$ and $\K_8(\C)$ respectively.\footnote{Indeed, there are ring maps $\L(\R) \to \L(\C)$ and $\ko \to \ku$, sending $b$ to $b_\C^2$ and $\beta_\R$ to $\beta_\C^4$, respectively, for any choice of generators $b_\C \in \L_2(\C)$ and $\beta_\C \in \K_2(\C)$.} 
Using the presentation $\ell(A) \simeq \k(A) \otimes_\ko \ell(\R)$ obtained in \cref{ThmA} and \cref{Cor:3-truncation-of-L}
we deduce that the map $\ksp \to \ell(\R)$ induces the equivalence 
\[ \tau_{\leq 3}\big(\k(A) \otimes_\ko \ksp\big) \stackrel{\simeq}{\lto} \tau_{\leq 3}\ell(A).\] 
We now utilise that $\k(A) = \tau_{\geq 0}\K(A)$ is the connective cover of a $\KO$-module and proceed with the following general observation. We let $M$ be a $\KO$-module and are then interested in the low degree homotopy of the $\ko$-module 
\[ (\tau_{\geq 0}M) \otimes_\ko \ksp.\]
From the fibre sequence $\Sigma\ko/2 \to \ko \to \ksp$ obtained in \cref{Lemma:KSp-presentation}, we deduce that 
\begin{enumerate}
\item $\pi_0(\tau_{\geq 0}M \otimes_\ko \ksp) \cong \pi_0(M)$, and
\item $\pi_1(\tau_{\geq 0}M \otimes_\ko \ksp) \cong \coker(\pi_0(M) \stackrel{\eta}{\to} \pi_1(M))$.
\end{enumerate}
To calculate $\pi_2$ and $\pi_3$, we consider the following diagram of horizontal and vertical fibre sequences
\[\begin{tikzcd}
	\Sigma (\tau_{\geq 0}M)/2 \ar[r] \ar[d] & \tau_{\geq 0}M \ar[r] \ar[d] & \tau_{\geq 0}M \otimes_\ko \ksp \ar[d] \\
	\Sigma M/2 \ar[r] \ar[d] & M \ar[r] \ar[d] & M \otimes_\KO \KSp \ar[d] \\
	\Sigma (\tau_{<0}M)/2 \ar[r] & \tau_{<0}M \ar[r] & C
\end{tikzcd}\]
from which we deduce that $\pi_i(C) = 0$ for $i\geq 3$ and that 
\[ \pi_2(C) \cong \pi_1(\Sigma (\tau_{<0}M)/2) \cong \pi_{-1}(M)[2].\]
In addition, we note that $\KSp \simeq \Sigma^4 \KO$. We therefore have a fibre sequence
\[ \tau_{\geq 0}M \otimes_\ko \ksp \lto \Sigma^4 M \lto C \]
whose long exact sequence on homotopy groups reveals that $\pi_3(\tau_{\geq 0}M \otimes_\ko \ksp) \cong \pi_3(\Sigma^4M) \cong \pi_{-1}M$ and that there is an exact sequence
\[ 0 \lto \pi_2(\tau_{\geq 0}M \otimes_\ko \ksp) \lto \pi_{-2}(M) \lto \pi_{-1}(M).\]
Here, we have used that $\pi_{2}(C) \subseteq \pi_{-1}(M)$. The latter map in this exact sequence is a natural transformation of functors $\pi_{-2} \to \pi_{-1}$ on $\KO$-modules, and is therefore either trivial or the $\eta$-multiplication. We claim that it is the $\eta$-multiplication, which then shows the theorem. 

The claim is equivalent to the statement that the map
\[ \pi_2(\tau_{\geq 0}M \otimes_\ko \ksp) \lto \pi_{-2}(M) \] 
appearing above is in general not an isomorphism. Therefore, it suffices to find an example of a $\KO$-module $M$ where $\pi_{-2}(M) \neq 0$ but $\pi_2(\tau_{\geq 0}M \otimes_\ko \ksp) = 0$. First, we note that there is an isomorphism
\[ \pi_2(\tau_{\geq 0}M \otimes_\ko \ksp) \simeq \pi_2(\tau_{[0,2]}M \otimes_\ko \Z) \]
Choosing $M = \KO[-3]$, we find $\pi_{[0,2]}M = \Z[1]$, so that 
\[  \pi_2(\tau_{[0,2]}M \otimes_\ko \Z) \cong \pi_1(\Z \otimes_\ko \Z) = 0,\]
in fact, since $\Z \simeq (\ko/\eta)/{\beta_\C}$, we have $\Z \otimes_\ko \Z \simeq (\Z/\eta)/\beta_\C \simeq \Z \oplus \Sigma^2 \Z \oplus \Sigma^3\Z \oplus \Sigma^5\Z$ since $\eta$ and $\beta_\C$ are zero on $\Z$.
However, $\pi_{-2}(M)= \pi_1(\KO) \neq 0$, so the claim is shown.
\end{proof}

We end this section with the following perspective on the map $\k(A) \otimes_\ko \ksp \to \ell(A)$ which was used in the proof of \cref{Thm:natural-ThmB}. Namely, as a consequence of \cref{Thm:existence+uniqueness-of-tau} and \cref{Lemma:KSp-presentation}, there is a commutative diagram
\[\begin{tikzcd}
	\Sigma \k(A)/2 \ar[r] \ar[d] & \k(A) \ar[r] \ar[d, equal] & \k(A) \otimes_\ko \ksp \ar[d] \\
	\widetilde{\k(A)_{hC_2}} \ar[r] & \k(A) \ar[r] & \ell(A)
\end{tikzcd}\]
and the fact that the right vertical map induces an equivalence of $3$-truncations can be used to show that the cofibre of the left most vertical map is 3-connective with $\pi_3$ given by $\k_0(A)/2$. Since this map is obtained from the map $\Sigma\SS/2 \to \widetilde{\SS_{hC_2}}$ upon applying the functor $- \otimes \k(A)$, this result also follows from the following lemma.

\begin{Lemma}\label[Lemma]{Lemma:relation}
There is a map $\Sigma \SS/2 \to \Sigma^\infty BC_2$ whose cofibre is 3-connective with $\pi_3$ isomorphic to $\Z/2$.
\end{Lemma}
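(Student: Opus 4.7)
The plan is to take the map induced by the standard cellular inclusion $\R P^2 \hookrightarrow \R P^\infty$. First, I would identify $\Sigma\SS/2 \simeq \Sigma^\infty \R P^2$, which follows from the CW decomposition $\R P^2 = S^1 \cup_2 e^2$ together with the fact that the reduced suspension spectrum of this complex is precisely $\Sigma\SS/2$. Second, I would identify $\widetilde{\SS_{hC_2}} \simeq \Sigma^\infty BC_2 \simeq \Sigma^\infty \R P^\infty$, where the first equivalence is just the splitting of the trivial summand off $\Sigma^\infty_+ BC_2$. Under these identifications the inclusion of spaces supplies a map $\Sigma\SS/2 \to \Sigma^\infty BC_2$ whose cofibre is $\Sigma^\infty(\R P^\infty / \R P^2)$.

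Next I would verify the connectivity and identify $\pi_3$. The standard CW structure on $\R P^\infty$ restricts to one on the quotient $\R P^\infty/\R P^2$ with a single cell in each dimension $n \geq 3$ and no positive cells below degree $3$. Hence $\R P^\infty/\R P^2$ is $2$-connected, its suspension spectrum is $3$-connective, and the stable Hurewicz theorem identifies $\pi_3$ of the cofibre with $H_3(\R P^\infty/\R P^2; \Z)$.

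Finally, this homology group is read off from the cellular chain complex of $\R P^\infty/\R P^2$, which is the truncation of the cellular chain complex of $\R P^\infty$ below degree $3$. Since the boundary $\partial_4$ in the latter is multiplication by $2$ (while $\partial_3$ vanishes for trivial reasons after truncation), one finds $H_3 = \Z/2$, completing the argument. I do not foresee a substantive obstacle here: the content of the lemma is essentially the recognition of $\Sigma\SS/2$ as $\Sigma^\infty \R P^2$ and of $\widetilde{\SS_{hC_2}}$ as $\Sigma^\infty \R P^\infty$, after which the connectivity and the value of $\pi_3$ reduce to a standard cellular computation on $\R P^\infty$.
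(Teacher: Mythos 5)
Your proof is correct, and it takes a genuinely different route from the paper. The paper computes directly with homotopy groups: it invokes Liulevicius for $\pi_1,\pi_2,\pi_3$ of $\Sigma^\infty BC_2$ (namely $\Z/2,\Z/2,\Z/8$), uses the Atiyah--Hirzebruch spectral sequence to pin down the effect of $\Sigma^\infty B\Z\to\Sigma^\infty BC_2$ on those groups, descends the map to $\Sigma\SS/2$ via the cofibre sequence, and reads off the cofibre's $\pi_3$ as $\coker(\Z/4\hookrightarrow\Z/8)\cong\Z/2$. You instead identify $\Sigma\SS/2\simeq\Sigma^\infty\R P^2$ and $\Sigma^\infty BC_2\simeq\Sigma^\infty\R P^\infty$, use the geometric inclusion so that the cofibre is literally $\Sigma^\infty(\R P^\infty/\R P^2)$, and then obtain the 3-connectivity from the CW structure and $\pi_3\cong H_3\cong\Z/2$ from stable Hurewicz plus the cellular chain complex ($\partial_4=\cdot 2$, and $\tilde C_2=0$ forces all of $\tilde C_3$ to be cycles). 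Both are elementary, but yours avoids any external input on the stable homotopy of $B C_2$ and makes the cofibre visible as a suspension spectrum, which is arguably the more transparent argument; note that the two constructed maps agree with the paper's at least up to the $\pi_2(\Sigma^\infty BC_2)=\Z/2$ torsor of lifts, and either choice satisfies the stated conclusion. One tiny wording slip: the remark that $\partial_3$ ``vanishes for trivial reasons after truncation'' is a bit imprecise --- what matters is that $\tilde C_2(\R P^\infty/\R P^2)=0$, so all of $\tilde C_3\cong\Z$ consists of cycles, and then $H_3=\Z/\mathrm{im}(\partial_4)=\Z/2$.
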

\begin{proof}
First, we recall the low dimensional homotopy groups of $\SS/2$ and $\Sigma^\infty BC_2$: We have that $\pi_0(\SS/2) = \Z/2$, $\pi_1(\SS/2) = \Z/2$ and $\pi_2(\SS/2) = \Z/4$. In addition, the $\eta$-multiplications 
\[ \pi_0(\SS/2) \lto \pi_1(\SS/2) \lto \pi_2(\SS/2) \]
are injective, as follows from comparing with $\SS$ along the canonical map $\SS \to \SS/2$. Now, according to \cite{Liulevicius}, we have $\pi_1(\Sigma^\infty BC_2) = \Z/2$, $\pi_2(\Sigma^\infty BC_2) = \Z/2$ and $\pi_3(\Sigma^\infty BC_2) = \Z/8$. The Atiyah--Hirzebruch spectral sequence then shows that the map $\Sigma \SS = \Sigma^\infty B\Z \to \Sigma^\infty BC_2$ induces the projection on $\pi_1$ an isomorphism on $\pi_2$ and an injection on $\pi_3$. In particular, this map descends to a map $\Sigma \SS/2 \to \Sigma^\infty BC_2$ and the induced map then induces an isomorphism on $\pi_0$ and $\pi_1$. On $\pi_3$, the composite $\Sigma \SS \to \Sigma\SS/2 \to \Sigma^\infty BC_2$ identifies with 
\[ \Z/2 \lto \Z/4 \lto \Z/8 \]
where the composite is the non-trivial map. It follows that $\Z/4 \to \Z/8$ must be injective as claimed.
This calculation also shows that the cofibre of the map $\Sigma \SS/2 \to \Sigma^\infty BC_2$ has $\pi_3$ isomorphic to $\coker(\Z/4 \subseteq \Z/8) \cong \Z/2$ as claimed.
\end{proof}

\section{Algebraic structure of $\L_*(-)$}\label{sec:5}

In this section we will describe the algebraic structure on the $\L$-theory groups under the isomorphisms obtained in \cref{Thm:natural-ThmB} and compare our results to previously known results. We will freely use the isomorphisms of \cref{Thm:natural-ThmB} which identifies all $\L$-groups. 

Recall that the homotopy groups $\KO_* = \K_*(\R)$ are $8$-periodic with the (invertible) real Bott element $\beta_\R$ in degree 8. We fix the generator $x \in \K_4(\R) \cong \Z$ whose complexification is $2\beta_\C^2$ and recall the relations $x^2 = 4 \beta_\R$ and $\eta x = 0$.

\begin{Prop}\label[Prop]{Rem:induced-map}
For a $C^*$-algebra $A$, the map $\tau_A: \k(A) \to \ell(A)$ induces the following maps on homotopy groups $\pi_n$ for $n\geq 0$:
\begin{align*}
 (2x)^n\colon \quad & \K_{4n}(A) \lto \K_{8n}(A) \cong \L_{4n}(A) \\
(2x)^n\colon \quad & \K_{4n+1}(A) \lto \K_{8n+1}(A)/ \eta \cong \L_{4n+1}(A) \\
(2x)^n \cdot x \colon \quad & \K_{4n+2}(A) \lto \K_{8n+6}(A)[\eta] \cong \L_{4n+2}(A) \\
(2x)^n\cdot x\colon \quad & \K_{4n+3}(A) \lto \K_{8n+7}(A) \cong \L_{4n+3}(A) \ .
\end{align*}
\end{Prop}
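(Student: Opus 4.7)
The plan is to reduce the statement to the case $A = \R$ and then compute $\tau_\R$ on $\pi_*(\ko)$ explicitly. Since $\tau_A = \id_{\k(A)} \otimes_\ko \tau_\R$ by \cref{Thm:existence+uniqueness-of-tau}, the induced map on $\pi_n$ is a $\ko_*$-linear natural transformation $\K_n \to \L_n$ of $\Ab$-valued functors on $\CAlg$. Composing with the natural identification $\L_n(A) \cong \K_m(A)$ from \cref{Thm:natural-ThmB} yields a natural transformation $\K_n \to \K_m$, and since $\k = \map_\KK(\R,-)$ is corepresented by $\R$ in $\KK$, such transformations are classified by $\KO_{m-n}$ via evaluation at $A=\R$. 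The proposition's formula likewise defines an element of $\KO_{m-n}$, so the statement reduces to computing $\tau_\R$ on $\pi_*(\ko)$.

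For $n \in \{0,1,2,3\}$ the verification is immediate: $\tau_\R$ on $\pi_0$ is the identity by unitality, and $\L_i(\R) = 0$ for $i \in \{1,2,3\}$ by \cref{Thm:natural-ThmB}, which matches the proposition's formulas upon using $\eta x = 0$. The key computation is $n = 4$. I would exploit naturality with the complexification $c \colon \R \to \C$, giving a commutative diagram
\[\begin{tikzcd}
\ko \ar[r,"c"] \ar[d, "\tau_\R"] & \ku \ar[d, "\tau_\C"] \\
\ell(\R) \ar[r, "c'"] & \ell(\C).
\end{tikzcd}\]
Combining the standard formula $c(x) = 2\beta_\C^2$ with $\tau_\C(\beta_\C) = 2\beta_L$ from \cite{LN}, where $\beta_L \in \L_2(\C) = \Z$ is the complex L-theory generator, yields $\tau_\C(c(x)) = 8 \beta_L^2$. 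Writing $\tau_\R(x) = \lambda b$ and using that $c'$ is a ring map sending the Bott class $b \in \L_4(\R)$ to $\beta_L^2 \in \L_4(\C)$, naturality forces $\lambda = 8$. Under the identification $\L_4(\R) \cong \K_8(\R)$ of \cref{Thm:natural-ThmB}, where $b \leftrightarrow \beta_\R$, this reads $\tau_\R(x) = 8\beta_\R = 2x \cdot x$, matching the proposition.

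For higher degrees, multiplicativity of $\tau_\R$ (as a lax symmetric monoidal transformation) combined with $x^2 = 4\beta_\R$ yields $\tau_\R(\beta_\R) = 16 b^2$, while $\tau_\R(\eta) = 0$ since $\L_1(\R) = 0$. Because the graded ring $\KO_* = \Z[\eta, x, \beta_\R^{\pm 1}]/(2\eta, \eta^3, \eta x, x^2 - 4\beta_\R)$ is generated by $\eta, x, \beta_\R$, these three values determine $\tau_\R$ on all of $\pi_*(\ko)$; a routine check by residue mod $4$ then confirms that the resulting formula is precisely the one in the proposition under the identifications of \cref{Thm:natural-ThmB}, which are compatible with $b \leftrightarrow \beta_\R$ throughout. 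The main subtlety I anticipate is pinning down the sign in $c'(b) = +\beta_L^2$, which will follow from the convention in \cite{LN} for $\beta_L$ together with $c'$ being a ring map that is the identity on $\pi_0$.
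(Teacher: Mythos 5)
Your overall strategy is close to the paper's — both proceed by the Yoneda lemma to reduce the natural transformation $\pi_n(\tau)\colon \K_n \to \L_n$ to a single classifying integer — but the step ``the statement reduces to computing $\tau_\R$ on $\pi_*(\ko)$'' has a genuine gap. By corepresentability, a natural transformation $\K_n \to \L_n$ is classified by $\L_n(\R[n])$, which by \cref{Thm:natural-ThmB} is a copy of $\Z$ in every case. However, evaluation at $A = \R$ does \emph{not} detect this integer when $n \not\equiv 0 \pmod 4$: for instance for $n=1$ the map $\K_1(\R)=\Z/2 \to \L_1(\R)=0$ is forced to vanish regardless of the classifying integer, and for $n\equiv 2,3 \pmod 4$ one similarly gets no information because $\L_n(\R)=0$ while the classifying group is $\Z$. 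Your evaluation at $\R$ (equivalently, knowing $\pi_*(\tau_\R)$ as a graded ring map $\KO_*\to\L_*(\R)$) therefore only pins down the cases $n\equiv 0\pmod 4$, which is what your $\tau_\R(x)=8b$ computation buys; the other residue classes are left undetermined.

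The paper closes this gap by restricting to \emph{complex} $C^*$-algebras: the complexification $\KO_{m-n} \to \KU_{m-n}$ is injective on the relevant free cyclic groups, and the formula for the map on homotopy of complex $C^*$-algebras (multiplication by $2^n$ on $\pi_{2n}$ and $\pi_{2n+1}$) was previously established in \cite[Theorem 4.1]{LN} (see \cref{Rmk:complex-case}). Comparing the proposed multiples $(2x)^n$ and $(2x)^n\cdot x$ against this known answer on complex algebras determines the integer in each residue class. So your argument can be patched by replacing ``evaluate at $\R$'' with ``restrict along the complexification to complex $C^*$-algebras and invoke \cite{LN}'' — which is exactly what the paper does. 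Two minor slips: the classifying evaluation should happen at the representing object $\R[n]$, not $\R$; and the presentation you write down is $\KO_*$, not $\ko_*$ (in $\ko_*$ the Bott class $\beta_\R$ is not invertible).
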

\begin{Rmk}\label[Rmk]{Rmk:complex-case}
In particular, in degree $8n$ and $8n+1$ the map under investigation is given by multiplication by $16^n$, up to Bott periodicity isomorphisms. Since the map $\K_4(\R) \to \K_4(\C)$ sends $x$ to $2\beta_\C^2$, \cref{Rem:induced-map} also shows that for complex $C^*$-algebras, the map induces multiplication by $2^n$ on $\pi_{2n}$ and $\pi_{2n+1}$. This was previously obtained in \cite[Theorem 4.1]{LN} and we shall make use of this fact below.
\end{Rmk}
\begin{proof}[Proof of \cref{Rem:induced-map}]
We note that the assignment $A \mapsto \K_n(A)$ viewed as a functor $\KK \to \Ab$ is corepresented by an $n$-fold shift (i.e.\ suspension) of $\R$, which we denote by $\R[n]$. Therefore, the Yoneda lemma for product preserving functors $\KK \to \Ab$ implies that natural transformations $\K_n \to \L_n$ are in 1-1 correspondence to classes in $\L_n(\R[n])$.
By \cref{ThmB}, this group is isomorphic to $\K_0(\R) = \Z$ and $\K_4(\R) = \Z\{x\}$ when $n\equiv 0,1,6,7 \mod 8$ and $n \equiv 2,3,4,5\mod 8$, respectively. We deduce that maps $\K_n(A) \to \L_n(A)$ have to be given by multiplication with a multiple of $x$ or a multiple of $1$ (under the respective identifications depending on $n$ described above). From the case of complex $C^*$-algebras as discussed in \cref{Rmk:complex-case}, we immediately deduce the precise form of the multiple: we simply note that for a complex $C^*$-algebra $A$ the element $x$ acts as $2\beta_\C^2$. Therefore, $(2x)^n$ acts as $2^{2n}$, and $(2x)^n\cdot x$ acts as $2^{2n+1}$. Therefore, the formulas described in the statement of \cref{Rem:induced-map} are correct for complex $C^*$-algebras (this is the content of \cref{Rmk:complex-case}), and hence by the above analysis in general.
\end{proof}

Next, we want to explain how the lax symmetric monoidal structure on $\L_*(-)$ is described in terms of the lax symmetric monoidal structure on $\K_*(-)$ under the isomorphisms provided by Theorem B. To state the result we have to describe the maps 
\[ \L_i(A) \otimes \L_j(B) \lto \L_{i+j}(A \otimes B) \] 
for $i,j = 0,1,2,3 \mod 4$ as everything is multiplicatively $4$-periodic. By graded symmetry, it is enough to do this for $0 \leq i \leq j \leq 3$.  We denote the lax symmetric monoidal structure of $\K$-theory  as
\[ \K_i(A) \otimes \K_j(B) \lto \K_{i+j}(A \otimes B) \qquad (a,b) \mapsto a * b \] 
and the induced $\KO_* = \K_*(\R)$-module structure on $\K_*(A)$ by the multiplication sign.

\begin{Prop}\label[Prop]{Rem:multiplication}
Under the isomorphisms of \cref{Thm:natural-ThmB} the exterior multiplication maps on the L-groups are maps of the following kind. 
\begin{enumerate}
\item $\K_{8n}(A) \otimes \K_{8m}(B)   \lto \K_{8n+8m}(A \otimes B)$
\item $\K_{8n}(A) \otimes \K_{8m+1}(B) / \eta \lto \K_{8n+8m + 1}(A \otimes B)/ \eta$
\item $\K_{8n}(A) \otimes \K_{8m+6}(B)[\eta] \lto \K_{8n+8m+ 6}(A \otimes B)[\eta]$
\item $\K_{8n}(A) \otimes \K_{8m+7}(B) \lto \K_{8n+8m+7}(A \otimes B)$
\item $\K_{8n+1}(A)/\eta \otimes \K_{8m+1}(B) / \eta \lto \K_{8n+8m+6}(A \otimes B)[\eta]$
\item $\K_{8n+1}(A)/\eta \otimes \K_{8m+6}(B)[\eta] \lto \K_{8n+8m+7}(A \otimes B)$
\item $\K_{8n+1}(A)/\eta \otimes \K_{8m+7}(B) \lto \K_{8n+8m+8}(A \otimes B)$
\item $\K_{8n+6}(A)[\eta] \otimes \K_{8m+6}(A)[\eta] \lto \K_{8n+8m+8}(A \otimes B)$
\item $\K_{8n+6}(A)[\eta] \otimes \K_{8m+7}(A) \lto \K_{8n+8m+9}(A \otimes B)/\eta$
\item $\K_{8n+7}(A)\otimes \K_{8m+7}(A) \lto \K_{8n+8m+14}(A \otimes B)[\eta]$
\end{enumerate}
For $a$ belonging to the left tensor factor and $b$ belonging to the right tensor factor, these maps are given by the following formulas:
\begin{figure}[H]
\begin{tabular}{|l|l|l|l|l|}
\hline
                     & $\K_{8m}(B)$ & $\K_{8m+1}(B)/\eta$ & $\K_{8m+6}(B)[\eta]$ & $\K_{8m+7}(B)$ \\ \hline
$\K_{8n}(A)$         &    $a*b$         &         $a*b$           &        $a*b$              &       $a*b$         \\ \hline
$\K_{8n+1}(A)/\eta$  &             &       $x\cdot (a*b)$  &  $a*b$          &       $2(a*b)$                 \\ \hline
$\K_{8n+6}(A)[\eta]$ &            &                     &            $\tfrac{x}{2\beta_\R}(a*b)$          &   $\tfrac{x}{2\beta_\R}(a*b)$             \\ \hline
$\K_{8n+7}(A)$       &            &                     &                      &   $2(a*b)$             \\ \hline
\end{tabular}
\end{figure}

Here, the abusive term $\frac{x} {2\beta_{\R}} (a * b)$ denotes an element depending naturally on $a$ and $b$ and whose multiplication with $2$ is given by $\frac{x} {\beta_{\R}} (a * b)$. Part of the statement is the claim that  there is a unique such element. 
\end{Prop}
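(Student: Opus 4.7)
The strategy is to leverage the equivalence $\k(A)\otimes_\ko\ell(\R)\simeq\ell(A)$ from \cref{Thm:existence+uniqueness-of-tau} as a lax symmetric monoidal equivalence; this is automatic because $\tau\colon\k\to\ell$ is the \emph{unique} lax symmetric monoidal transformation, so the equivalence inherits multiplicative data. Under this identification, the L-theory exterior product decomposes as the K-theoretic exterior product $\k(A)\otimes_\ko\k(B)\to\k(A\otimes B)$ followed by the internal multiplication of $\pi_*\ell(\R)=\Z[b^{\pm 1}]$, and every entry of the table is determined by these two ingredients together with the isomorphisms of \cref{Thm:natural-ThmB} under which $b\in\L_4(\R)$ corresponds to $\beta_\R\in\K_8(\R)$.

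Cases (1)--(4) follow from the fact that the isomorphism $\L_0(A)\cong \K_0(A)$ is literally $\tau$, so the $\L_0$-external multiplication matches the $\K_0$-external multiplication on the right-hand sides under the identifications of \cref{Thm:natural-ThmB}, by naturality of those identifications. For case (5), both factors are in the image of $\tau$ (which is surjective on $\L_1$ because the isomorphism $\L_1(A)\cong\K_1(A)/\eta$ identifies $\tau$ with the canonical projection), and the formula reads $\tau(a)\tau(b)=\tau(a*b)=x\cdot(a*b)$ by \cref{Rem:induced-map}. This handles all entries in which at least one of the two factors lies in $\L_0$ or $\L_1$.

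The genuinely non-trivial content lies in cases (6)--(10), where at least one factor lies in $\L_{4n+2}\cong\K_{8n+6}[\eta]$ or $\L_{4n+3}\cong\K_{8n+7}$, which are generally not in the image of $\tau$. For these I would argue by naturality and reduce to universal examples: each side of the proposed identity is a $\Z$-bilinear natural transformation of product-preserving functors on $\KK\times\KK$, and by Yoneda it suffices to check equality for $A=\R[a]$ and $B=\R[b]$ with $a,b$ chosen so that the relevant K-groups and L-groups are non-zero (and therefore cyclic with explicit generators). On such universal inputs the L-product reduces to multiplication in $\pi_*\ell(\R)=\Z[b^{\pm 1}]$, while the $\ko$-module structure on the K-side is controlled by the formula $\tau(x)=8b$ deduced from \cref{Rem:induced-map}. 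The uniqueness of $\frac{x}{2\beta_\R}(a*b)$ in (8) and (10) is built into this argument: the expression is \emph{defined} to be the image of the L-product under the isomorphism $\L_{4k}(A\otimes B)\cong \K_{8k}(A\otimes B)$, and the displayed equation $2\beta_\R c=x(a*b)$ is then a derived identity to be verified on universal inputs.

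The main obstacle I expect is the implicit divisibility assertion $x(a*b)\in 2\beta_\R\K_{8k}$ underlying cases (8) and (10). In the universal setting this is a finite computation, but the bookkeeping of the isomorphisms of \cref{Thm:natural-ThmB} together with the interaction of the $\eta$-torsion of the inputs and the relation $\eta x=0$ must be carried out carefully to produce the factor of $2$ at the right place; this is where the structure of $\ell(\R)$ as a $\ko$-algebra, rather than merely a $\ko$-module, plays its essential role.
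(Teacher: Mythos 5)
Your overall strategy — use naturality on $\KK\times\KK$, corepresentability, and compatibility with the unique multiplicative map $\tau$ — is the same as the paper's, and your handling of the ``easy'' cases (where both factors are in the image of $\tau$, i.e.\ (1), (2), (5)) is correct and essentially the paper's argument. However, there is a genuine gap in the treatment of cases (3), (6), (8), (9), and it is precisely the part you flag as the ``main obstacle.'' Your reduction ``by Yoneda it suffices to check equality for $A=\R[a]$ and $B=\R[b]$'' is not valid for the functor $A\mapsto\K_{8n+6}(A)[\eta]\cong\L_{4n+2}(A)$: this is a \emph{sub}functor of a corepresentable functor, not a corepresentable functor, and a natural transformation out of it is \emph{not} determined by its values on shifts of $\R$ — there is no universal element supported on $\R[a]$. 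Checking on explicit $\R[a]$'s where the groups happen to be nonzero is not a Yoneda argument and does not produce a naturality proof.

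The ingredient you are missing is the one the paper uses: the natural \emph{surjection} $\K_0(A_\C)\twoheadrightarrow\K_6(A)[\eta]\cong\L_2(A)$ coming from the generalised Wood sequence, whose source $A\mapsto\K_0(A_\C)$ \emph{is} corepresented — by $\C$, viewed as an object of the real $\KK$. Precomposing with this surjection (and with $\K_1\twoheadrightarrow\L_1$ where needed) turns the problem into a Yoneda computation of groups such as $\K_0(\C)$, $\K_4(\R)$, and $\K_0(\C\otimes_\R\C)$, and the torsion-freeness of $\K_0(\C\otimes_\R\C)$ is exactly what lets the paper multiply by $16$ and resolve the divisibility issue behind $\tfrac{x}{2\beta_\R}(a*b)$ in cases (8) and (9). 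Without this surjection, your argument has no way to pin down the natural transformations on the $\eta$-torsion factors, and the uniqueness of the ``division by $2\beta_\R$'' element remains unproven. Finally, a small caveat: the opening claim that the equivalence $\k(A)\otimes_\ko\ell(\R)\simeq\ell(A)$ immediately ``decomposes'' the $\L$-theoretic product into the $\K$-theoretic one followed by multiplication in $\pi_*\ell(\R)$ is morally right at the spectrum level, but on homotopy groups one still has a Künneth-type layer to unwind, so it does not by itself yield the table entries.
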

\begin{proof}

We take a step back again and recall that the exterior multiplication maps on L-theory are natural transformations 
\[
\L_i \otimes \L_j \to \L_{i + j} \ .
\]
If $i$ and $j$ are $0$ or $3$ modulo $4$, then the $\L$-groups are isomorphic to $\K$-groups and thus the source $\L_i \otimes \L_j$ is corepresentable by shifts of $\R$ (on the category $h\KK \otimes h\KK$ whose objects are pairs of $C^*$-algebras and whose hom abelian groups are the tensor products of the hom abelian groups in $h\KK$). Consequently, the exterior multiplication $\L_i \otimes \L_j \to \L_{i+j}$ is given by an element in $\L_{i+j}(\R[m])$ for appropriate $m$. This group is isomorphic to $\K_0(\R) = \Z$ and $\K_0(\R)[\eta] =  2\Z$ (depending on the precise values of $i$ and $j$) so that in these cases the multiplication has to be given by a multiple of $a * b$ and $2(a*b)$, respectively. Using that the map of  \cref{Rem:induced-map} is to be compatible with external products, we immediately get the desired multiples. This proves cases $(1)$, $(4)$, and $(10)$. 

By \cref{ThmB} and the remark following \cref{ThmB} in the introduction, we have natural surjections $\K_1(A) \to \L_1(A)$ and $\K_0(A_\C) \to \L_2(A)$.
The functor $A \mapsto \K_0(A_\C)$ is corepresentable by $\C$ and the functor $A \mapsto \K_1(A)$ by $\R[1]$. We deduce that for any values of $i$ and $j$ we have a surjection $F_i(A) \otimes F_j(A) \twoheadrightarrow \L_i(A) \otimes \L_j(A)$ where $F_i$ and $F_j$ are corepresentable. Any natural transformation with source $\L_i \otimes \L_j$ is then uniquely determined by its restriction to $F_i \otimes F_j$. Computing natural transformations $F_i \otimes F_j \to \L_{i + j}$ the resulting groups are given by 
\[
\K_0(\R) , \K_0(\C), \K_4(\R), \K_0(\C\otimes_\R \C).
\]
Using again that the comparison map $\tau$ is compatible with external products and \cref{Rem:induced-map}, we obtain cases (2), (3), (5), (6), and (7). It remains to treat case (8) and (9). We shall argue case (8) and leave the details of case (9) to the reader. We consider the following diagram of natural transformations
\[\begin{tikzcd}
	\K_2(A_\C) \otimes \K_2(B_\C) \ar[r,"u\otimes u"] \ar[d,"x\otimes x"] & \K_2(A) \otimes \K_2(B) \ar[r] \ar[d,"x\otimes x"] & \K_4(A\otimes B) \ar[d,"2x"] \\
	\K_6(A_\C) \otimes \K_6(B_\C) \ar[r, two heads,"u\otimes u"] & \K_6(A)[\eta]\otimes \K_6(B)[\eta] \ar[r,"m"] & \K_8(A\otimes B)
\end{tikzcd}\]
the left diagram commutes because the forgetful map $u\colon \K(A_\C) \to \K(A)$ is $\KO$-linear. The right diagram commutes because the vertical maps are induced by $\tau$, see \cref{Rem:induced-map}, and $\tau$ is compatible with exterior multiplications.
The lower right horizontal map $m$ is the one we wish to describe as $m(a,b) = \tfrac{x}{2\beta_\R}(a*b)$. Since the lower left horizontal arrow is surjective, it suffices to show that the equality holds after precomposing with this surjective map. Doing this, both terms are natural transformations which, by corepresentability of the source, are given by elements of $\K_0(\C\otimes \C)$.
Since this group is torsion free, we may equivalently show that
\[16m(u(a),u(b)) = \tfrac{8x}{\beta_\R}(u(a)*u(b))\]
where $a \in \K_6(A_\C)$ and $b \in \K_6(B_\C)$.
Using the above commutative diagram, and the fact that the on the K-theory of complex $C^*$-algebras, $x$ acts via $2\beta_\C^2$, we see that 
\[ 4m(u(a),u(b)) = 2x \cdot (u(\beta_\C^{-2} a) * u(\beta_\C^{-2}b))\]
so it suffices to show that 
\[ 4\beta_\R \cdot (u(\beta_\C^{-2}a) * u(\beta^{-2}_\C b)) = 4(u(a) * u(b)).\]
This follows from the facts that $4\beta_\R = x^2$, $-*-$ is $\KO$-bilinear, $u$ is $\KO$-linear, and that $xa = 2\beta_\C^2 a$ as already used earlier.
Case (9) can be shown by a similar argument.
\end{proof}

\begin{Rmk}
In this remark, we collect what was previously known about the L-groups of $C^*$-algebras. 
\begin{enumerate}
\item There is a canonical signature-type isomorphism $\L_0(A) \to \K_0(A)$, see e.g.\ \cite[Theorem 1.6]{Rosenberg}. 
\item There are canonical isomorphisms $\K_n(A)\adjt \cong \L_n(A)\adjt$, see \cite[Theorem 1.11]{Rosenberg} and \cite{LN} for the more general statement that there is a canonical equivalence $\K\adjt \simeq \L\adjt$ of spectrum valued functors.
\item For a unital real $C^*$-algebra $A$, there is a canonical surjection $\K_1(A) \to \L_1^{h}(A)$ whose kernel is generated by the image of $\K_1(\R) \to \K_1(A)$, see \cite[Theorem 1.9]{Rosenberg}. Here, $\L^h$ refers to \emph{free} L-theory. We give a new proof of this presentation of $\L_1^h(A)$ in \cref{Prop:free-L-theory-degree1+3} below. 
With the arguments used there, an observation about the Rothenberg sequence for $C^*$-algebras \cite[Remark 1.7]{Rosenberg}, and some additional work, one can in fact conversely recover an isomorphism $\L_1(A) \cong \K_1(A)/\eta$ for any $C^*$-algebra $A$.
\end{enumerate}
To the best of our knowledge, no conjectural relation between $\L_n(A)$ and $\K_n(A)$ has been made for $n \geq 2$ without inverting 2. We also note that, by construction, the isomorphism in (1) is the inverse of the canonical isomorphism induced by the map $\tau$ of \cref{ThmA}. 
\end{Rmk}
We will now also comment how our results imply statements about the higher free L-groups of unital $C^*$-algebras and in particular reprove part (3) above in \cref{Prop:free-L-theory-degree1+3}.
First, we describe the free L-theory of a unital $C^*$-algebra as follows. We recall that $SA=C_0((0,1);A)$ denotes the $C^*$-algebraic suspension of the algebra $A$ and note that $S$ descends to the loop functor on the stable $\infty$-category $\KK$.

\begin{Prop}\label[Prop]{prop:free-L}
Let $A$ be a unital $C^*$-algebra. There is a canonical fibre sequence
\[ \Sigma \L(SA) \lto \L^h(A) \lto C(A)^{tC_2} \]
where $C(A) = \ker(\K_0(A) \to \widetilde{\K}_0(A)) = \mathrm{Im}(\K_0(\R) \to \K_0(A))$ is a cyclic group.
\end{Prop}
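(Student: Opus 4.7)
The plan is to derive the desired fibre sequence by combining the KK-invariant identification $\Sigma \L(SA) \simeq \L^p(A)$ with a Rothenberg-type comparison of free and projective L-theory, refined through a Bockstein sequence coming from the short exact sequence $0 \to C \to \K_0(A) \to \widetilde{\K}_0(A) \to 0$.

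First, I would identify $\Sigma \L(SA) \simeq \L(A) = \L^p(A)$ for the unital $A$ using \cref{Cor:KK-invariance} and the fact noted in the lead-in that the $C^*$-suspension $S$ corresponds to the loop functor $\Omega$ in the stable $\infty$-category $\KK$. This reduces the problem to producing a fibre sequence relating $\L^p(A)$, $\L^h(A)$, and $C^{tC_2}$.

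Secondly, I would invoke a generalised Rothenberg-type fibre sequence: for a unital $\Z\adjt$-algebra with involution $R$ and a $C_2$-stable subgroup $U \subseteq \K_0(R)$, there is a natural fibre sequence
\[ \L^U(R) \lto \L^p(R) \lto (\K_0(R)/U)^{tC_2}, \]
where $\L^U$ denotes the L-theory of Poincaré objects with underlying $\K$-theory class in $U$. Applying this with $R = A^+ = A \rtimes \Z\adjt$ and $U$ the free decoration (the image of $\K_0(\Z\adjt) \to \K_0(A^+)$), then taking fibres over the projection $A^+ \to \Z\adjt$, yields a fibre sequence $\L^h(A) \to \L^p(A) \to \K_0(A)^{tC_2}$. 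The identification of the cofibre uses the $C_2$-equivariant isomorphism $(\K_0(A) \oplus \Z)/\Z \cdot ([A],1) \cong \K_0(A)$, where $C_2$-equivariance follows from $[A]^\vee = [A]$.

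Thirdly, I would combine this with the Bockstein fibre sequence $C^{tC_2} \to \K_0(A)^{tC_2} \to \widetilde{\K}_0(A)^{tC_2}$ induced by the short exact sequence $0 \to C \to \K_0(A) \to \widetilde{\K}_0(A) \to 0$ of $C_2$-modules (with trivial action on $C$), and with the classical Rothenberg sequence $\L^h_{\mathrm{unital}}(A) \to \L^p(A) \to \widetilde{\K}_0(A)^{tC_2}$. An octahedral-axiom argument applied to the composable boundary maps $\L^p(A) \to \K_0(A)^{tC_2} \to \widetilde{\K}_0(A)^{tC_2}$ then extracts the desired fibre sequence $\Sigma \L(SA) \to \L^h(A) \to C^{tC_2}$. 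The main obstacle is establishing the generalised Rothenberg sequence at the spectrum level in the non-unital formalism of \cite{CDHII, CDHIII} and carefully tracking the boundary maps so that the octahedral diagram closes with precisely $C^{tC_2}$ as cofibre; once this technical input is in place, the remaining manipulation is formal.
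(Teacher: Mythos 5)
The central claim in your first step, that $\Sigma\L(SA) \simeq \L(A)$, is false, and this is the key gap. KK-invariance (\cref{Cor:KK-invariance}) only says that $\L$ factors through a functor $\KK \to \Sp$; it does \emph{not} say that this induced functor is exact on the stable $\infty$-category $\KK$. In fact, L-theory famously fails to be excisive, and that failure is exactly the content you need: \cite[Proposition 4.6]{LN} gives a fibre sequence $\Sigma\L(SA) \to \L(A) \to \K_0(A)^{tC_2}$ whose third term measures the failure of excision, and it is generically non-zero. The paper even spells out a counterexample to your step 1 in \cref{Sec:examples}: the canonical map $\Sigma\L(\C) \to \L(\C[1])$ is \emph{not} an equivalence, its cofibre being $\Z^{tC_2}$. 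Moreover, if $\Sigma\L(SA)$ were $\L(A)$, then the proposition would produce a map $\L(A) \to \L^h(A)$ with cofibre $C^{tC_2}$, going in the opposite direction to the classical Rothenberg map $\L^h(A) \to \L(A)$; there is no such integral map in general. Your first step thus collapses the very distinction the proposition is concerned with.

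The remainder of your proposal is closer to the mark than step 1 suggests, and in fact your octahedral manipulation in step 3 is essentially the paper's argument in disguise. The fibre of $\L(A) \to \K_0(A)^{tC_2}$ that your generalised Rothenberg sequence for $A^+$ is meant to identify is precisely $\Sigma\L(SA)$ (this is what \cite[Proposition 4.6]{LN} says), not the ordinary free L-theory $\L^h(A)$; you label it ``$\L^h(A)$'' and then separately introduce ``$\L^h_{\mathrm{unital}}(A)$,'' which conflates two different objects (free $A^+$-modules restrict to projective, not free, $A$-modules). If you replace your step 1 and the mislabelling in step 2 with the correct input $\Sigma\L(SA) \to \L(A) \to \K_0(A)^{tC_2}$, then your step 3 — the Bockstein sequence $C^{tC_2}\to \K_0(A)^{tC_2}\to \widetilde{\K}_0(A)^{tC_2}$, the classical Rothenberg sequence $\L^h(A)\to\L(A)\to \widetilde{\K}_0(A)^{tC_2}$, and the octahedron on the composite $\L(A)\to \K_0(A)^{tC_2}\to\widetilde{\K}_0(A)^{tC_2}$ — does yield the desired sequence. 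The paper presents exactly this computation as a commutative diagram of two horizontal fibre sequences over the identity on $\L(A)$, with the cofibre of the right vertical map computed by the Bockstein; no appeal to a generalised Rothenberg sequence for $A^+$ is needed.
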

\begin{proof}
By \cite[Proposition 4.6]{LN} and the Rothenberg sequence for $\L^h(-)$ and $\L(-)$ \cite[\S 9]{Ranicki}, we have a commutative diagram of fibre sequences (the left vertical dashed map is the one induced from the right solid square)
\begin{equation}\label{diag:fibre-sequences} 
\begin{tikzcd}
	\Sigma \L(SA) \ar[r] \ar[d, dashed] & \L(A) \ar[d,equal] \ar[r] & \K_0(A)^{tC_2} \ar[d] \\
	\L^h(A) \ar[r] & \L(A) \ar[r] & \widetilde{\K}_0(A)^{tC_2} 
\end{tikzcd}
\end{equation}
from which the proposition follows immediately.
\end{proof}

The following is an amusing consequence.
\begin{Cor}
Suppose $A$ is a unital $C^*$-algebra in which the element $[A] \in \K_0(A)$ has odd order\footnote{e.g. $A = \mathscr{O}_{2n}^\R$.}. Then the map $\Sigma \L(SA) \to \L^h(A)$ is an equivalence.
\end{Cor}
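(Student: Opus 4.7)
The plan is to directly apply \cref{prop:free-L}, so that the task reduces to showing that $C^{tC_2} \simeq 0$ under the hypothesis that $[A] \in \K_0(A)$ has odd order. Once this vanishing is established, the fibre sequence
\[ \Sigma \L(SA) \lto \L^h(A) \lto C^{tC_2} \]
exhibits the map $\Sigma \L(SA) \to \L^h(A)$ as an equivalence, which is what we want.

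First, I would unpack what $C$ is as a $C_2$-module. By the definition given in \cref{prop:free-L}, $C$ is the image of the unit map $\K_0(\R) \to \K_0(A)$, so it is the cyclic subgroup generated by $[A]$, whose order equals the order of $[A]$, which is odd by assumption. The $C_2$-action on $\K_0(A)$ entering the diagram \eqref{diag:fibre-sequences} is induced by the duality functor on projective modules; since the forgetful and unit maps are compatible with duality, the inclusion $C \hookrightarrow \K_0(A)$ is $C_2$-equivariant, and the $C_2$-action on $C$ is a quotient of the trivial action on $\K_0(\R) \cong \Z$. Hence $C$ is a finite cyclic group of odd order with \emph{trivial} $C_2$-action.

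Next, I would carry out the standard computation of Tate cohomology of a finite cyclic group $C$ of odd order with trivial $C_2$-action. The Tate construction of an Eilenberg--MacLane spectrum $HC$ with trivial action has homotopy groups given by Tate cohomology $\hat{H}^*(C_2;C)$, which is $2$-periodic and computed by the cokernel and kernel of the norm map, i.e.\ multiplication by $2$ on $C$. Since $|C|$ is odd, multiplication by $2$ is invertible on $C$, so all Tate cohomology groups vanish and $C^{tC_2} \simeq 0$.

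I do not expect any of these steps to be a real obstacle; the only mild subtlety is confirming that the $C_2$-action on $C$ is trivial, but this is immediate from the naturality of the duality action and the fact that $C$ is receiving a map from $\K_0(\R)$, on which $C_2$ acts trivially. The footnoted example $A = \mathscr{O}_{2n}^\R$ fits this pattern, since the K-theory class of the unit generates a cyclic group of odd order $2n-1$.
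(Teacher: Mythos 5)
Your proof is correct and takes essentially the same approach as the paper: apply \cref{prop:free-L} and observe that $C$ is a finite cyclic group of odd order (with trivial $C_2$-action), so $C^{tC_2}$ vanishes. The paper states this more tersely but the underlying argument is identical.
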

\begin{proof}
The element $[A] \in \K_0(A)$ generates the kernel of the map $\K_0(A) \to \widetilde{\K}_0(A)$. Therefore, under the assumptions of the corollary, $C(A)$ is a finite group of odd order, so its $C_2$-Tate cohomology vanishes.
\end{proof}

We then investigate the long exact sequence associated to the fibre sequence of \cref{prop:free-L}. To do so, we first analyse the top horizontal fibre sequence in diagram \eqref{diag:fibre-sequences} and recall that, since the $C_2$-action on $\K_0(A)$ is trivial\footnote{any finitely generated projective $A$-module $P$ admits a positive definite unimodular form, giving an isomorphism from $P$ to $P^\vee$. See also the proof of \cref{Thm:existence+uniqueness-of-tau} for the triviality of the $C_2$-action on the spectrum $\k(A)$.}, we have $\hat{H}^\mathrm{ev}(C_2;\K_0(A)) \cong \K_0(A)/2$ and $\hat{H}^\mathrm{odd}(C_2;\K_0(A)) \cong \K_0(A)[2]$. 
\begin{Prop}\label[Prop]{prop:boundary-maps}
Under the isomorphisms provided by \cref{ThmB}, the natural maps $\L_n(A) \to \pi_n(\K_0(A)^{tC_2})$ appearing in the long exact sequence associated to the top horizontal fibre sequence of diagram \eqref{diag:fibre-sequences} are the following ones:
\begin{enumerate}
\item the projection $\K_0(A) \to \K_0(A)/2$ for $n \equiv 0 \mod 4$,
\item the trivial map $\K_1(A)/\eta \to \K_0(A)[2]$ for $n\equiv 1 \mod 4$,
\item the unique, non-trivial natural map $\K_6(A)[\eta] \to \K_0(A)/2$ for $n \equiv 2 \mod 4$\footnote{The assertion is that there is exactly one such natural map. An explicit description can be given as follows: lift an element in $K_6(A)[\eta]$ to an element in $K_6(A_\C)$ along the `forgetful' map $K_*(A_\C) \to K_*(A)$. Then multiply the lift with $\beta^{-3}$ to obtain an element in $K_0(A_\C)$ and apply the forgetful map followed by the mod 2 reduction.
}\label{foot}
, and
\item the multiplication by $\eta$ map $\K_7(A) \to \K_0(A)[2]$ for $n \equiv 3 \mod 4$.
\end{enumerate}
\end{Prop}
\begin{proof}
First, we note that all maps appearing are natural in $A$. Next, we recall that the map under consideration is the composite of the natural transformations $\L \to \K_\alg(-)^{tC_2} \to \K_0(-)^{tC_2}$, both of which are canonically lax symmetric monoidal transformations. We deduce that the map under consideration is $4$-periodic (since everything is a module over $\L(\R)$), hence it suffices to treat the cases $n=0,1,2,3$.
The case $n=0$ follows from a direct inspection. The case $n=1$ is obtained by considering the natural maps 
\[ \K_1(A) \lto \K_1(A)/\eta \lto \K_0(A)[2] \subseteq \K_0(A) \]
and observing that any such natural map is given by multiplication by an element of $\K_{-1}(\R) = 0$. Since the first map above is surjective and the last map is injective, the middle map is trivial as claimed. The case $n=2$ is obtained by noting that the composite 
\[ \K_6(A_\C) \lto \K_6(A)[\eta] \lto \K_0(A)/2 \]
is again natural and the first map is surjective by the generalised Wood sequence discussed in the remark following \cref{ThmB}. Furthermore, as before, the source is, as a functor in $A$, corepresented by $\C$. Therefore, natural such maps are given by an element in $\K_0(\C)/2 = \Z/2$. It then suffices to show that the map under investigation is not trivial. This follows from the case of complex $C^*$-algebras: The 2-periodicity of L-theory for complex $C^*$-algebras indeed shows that this map identifies with the map for $n=0$ which is non-trivial by the first part. 
This construction also shows that the description given in footnote 5 is correct: the map $\K_6(A_\C) \to \K_0(A)/2$ given by the non-trivial element $\K_0(\C)/2 = \Z/2$ factors as $\K_6(A_\C) \to \K_0(A_\C) \to \K_0(A) \to \K_0(A)/2$ since the element in $\K_0(\C)/2$ lifts through the induced maps $ \K_0(\C_\C) \to \K_0(\C) \to \K_0(\C)/2$ in the prescribed manner.

Finally, the case $n = 3$ must, by the same reasoning as earlier, be given by multiplication with a 2-torsion element of $\K_1(\R)=\Z/2\{\eta\}$. It then suffices to know that this map is non-trivial, which follows from considering the algebra $A= S\R$.
\end{proof}

\begin{Cor}\label[Cor]{Prop:free-L-theory-degree1+3}
Let $A$ be a unital $C^*$-algebra. Then 
\begin{enumerate}
\item there is a canonical isomorphism $\L_1^h(A) \cong \K_1(A)/\langle \eta \rangle$, and
\item there is a canonical isomorphism $\L_3^h(A) \cong \K_7(A) \times_{\K_8(A)} C(A)$, where the maps appearing in the pullback are given by the $\eta$ multiplication $\K_7(A) \to \K_8(A)$ and the canonical map $C(A) \to \K_0(A) \cong \K_8(A)$.
\end{enumerate}

\end{Cor}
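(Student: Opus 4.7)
The plan is to extract both identifications from the long exact sequence associated to the Rothenberg-type fibre sequence $\L^h(A) \to \L(A) \to \widetilde{\K}_0(A)^{tC_2}$ (the bottom row of diagram \eqref{diag:fibre-sequences}), in combination with \cref{prop:boundary-maps}, which identifies the maps $\L_n(A) \to \pi_n(\K_0(A)^{tC_2})$ in $\K$-theoretic terms. Because the $C_2$-action on $\K_0(A)$ is trivial, one has $\pi_n(\widetilde{\K}_0(A)^{tC_2}) \cong \widetilde{\K}_0(A)/2$ for $n$ even and $\widetilde{\K}_0(A)[2]$ for $n$ odd, and the composed maps $\L_n(A) \to \pi_n(\widetilde{\K}_0(A)^{tC_2})$ are obtained from those in \cref{prop:boundary-maps} by post-composition with the projection $\K_0(A) \twoheadrightarrow \widetilde{\K}_0(A)$.

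For part (2), the relevant portion of the long exact sequence is
\[ \L_4(A) \lto \widetilde{\K}_0(A)/2 \lto \L_3^h(A) \lto \L_3(A) \lto \widetilde{\K}_0(A)[2]. \]
By \cref{prop:boundary-maps}(1) the first map is the surjection $\K_0(A) \twoheadrightarrow \widetilde{\K}_0(A)/2$, so $\L_3^h(A) \hookrightarrow \L_3(A) = \K_7(A)$ is injective, and by \cref{prop:boundary-maps}(4) the rightmost map is identified with $\K_7(A) \xrightarrow{\eta} \K_0(A) \twoheadrightarrow \widetilde{\K}_0(A)$, whose kernel is precisely the pullback $\K_7(A) \times_{\K_8(A)} C$.

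For part (1), the analogous fragment
\[ \L_2(A) \lto \widetilde{\K}_0(A)/2 \lto \L_1^h(A) \lto \L_1(A) \lto \widetilde{\K}_0(A)[2] \]
has trivial rightmost map by \cref{prop:boundary-maps}(2), so $\L_1^h(A)$ surjects onto $\L_1(A) = \K_1(A)/\eta\K_0(A)$ with kernel the cokernel of the leftmost map. The main step is to identify this cokernel. Using the explicit recipe recorded in the footnote to \cref{prop:boundary-maps}(3), combined with the surjectivity $\K_6(A_\C) \twoheadrightarrow \K_6(A)[\eta]$ from the generalised Wood sequence stated after \cref{ThmB}, the image of $\L_2(A) \to \K_0(A)/2$ coincides with the image of $u \colon \K_0(A_\C) \to \K_0(A) \twoheadrightarrow \K_0(A)/2$; the Wood sequence further identifies the image of $u$ with $\K_0(A)[\eta]$.

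Since $\eta$ is $2$-torsion we have $2\K_0(A) \subseteq \K_0(A)[\eta]$, so the cokernel in $\widetilde{\K}_0(A)/2$ simplifies to $\K_0(A)/(\K_0(A)[\eta] + C) \cong \eta\K_0(A)/\eta C$. This yields a short exact sequence
\[ 0 \lto \eta\K_0(A)/\eta C \lto \L_1^h(A) \lto \K_1(A)/\eta\K_0(A) \lto 0 \]
compatible with the evident extension $0 \to \eta\K_0(A)/\eta C \to \K_1(A)/\eta C \to \K_1(A)/\eta\K_0(A) \to 0$. The natural surjection $\K_1(A) \twoheadrightarrow \L_1^h(A)$ annihilates $\eta C$ because $\eta C$ is the image of $\K_1(\R) \to \K_1(A) \to \L_1^h(A)$ and $\L_1^h(\R) = \L_1(\R) = 0$ (using $\widetilde{\K}_0(\R) = 0$); the five lemma then yields the desired identification. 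The main technical content is the image computation in the third paragraph, for which the explicit formula from \cref{prop:boundary-maps}(3) together with the Wood sequence are essential.
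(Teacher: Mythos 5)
Your treatment of part (2) is correct and in fact slightly slicker than the paper's: you use only the Rothenberg sequence $\L^h(A) \to \L(A) \to \widetilde{\K}_0(A)^{tC_2}$ together with \cref{prop:boundary-maps} (after postcomposing with $\K_0(A)^{tC_2} \to \widetilde{\K}_0(A)^{tC_2}$, as justified by diagram \eqref{diag:fibre-sequences}), whereas the paper also invokes the fibre sequence of \cref{prop:free-L} to establish injectivity of $\L_3^h(A) \to \L_3(A)$. Your surjectivity of $\L_4(A) \to \widetilde{\K}_0(A)/2$ via \cref{prop:boundary-maps}(1) achieves the same thing more directly, and your identification of the kernel of $\L_3(A) \to \widetilde{\K}_0(A)[2]$ with the pullback is correct.

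Part (1), however, has a genuine gap. The Rothenberg sequence alone only shows that $\L_1^h(A)$ sits in a short exact sequence
\[
0 \lto \eta\K_0(A)/\eta C \lto \L_1^h(A) \lto \K_1(A)/\eta\K_0(A) \lto 0,
\]
and you have (correctly) shown that $\K_1(A)/\langle\eta\rangle = \K_1(A)/\eta C$ sits in an extension with the same outer terms. But two extensions with identical outer terms need not be isomorphic; the extension class must be matched. You invoke a ``natural surjection $\K_1(A) \twoheadrightarrow \L_1^h(A)$'' to produce the comparison map and close the argument with the five lemma, but this surjection is precisely what is not supplied by the Rothenberg sequence and is not constructed anywhere in your proposal. (Citing Rosenberg's Theorem 1.9 here would be circular, since that is exactly the statement being reproven.) The paper's mechanism for producing this map is the \emph{other} fibre sequence, $\Sigma\L(SA) \to \L^h(A) \to C^{tC_2}$ from \cref{prop:free-L}: its long exact sequence gives a map $\L_0(SA) \to \L_1^h(A)$, and the identification $\L_0(SA) \cong \K_1(A)$ of \cref{Thm:natural-ThmB} realises $\L_1^h(A)$ directly as a quotient of $\K_1(A)$, after which the kernel is identified by recognising $\K_0(A)/2 \to \L_0(SA) \cong \K_1(A)$ as $\eta$-multiplication. (Alternatively one can use the map $\k^\free(A) \to \L^h(A)$ from topological Grothendieck--Witt theory, as discussed after \cref{prop:free-L2}, but then the compatibility with your two short exact sequences still needs to be verified.) Without constructing this surjection and checking its compatibility with the boundary maps, the five-lemma step does not go through. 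Your computation that the relevant cokernel is $\eta\K_0(A)/\eta C$ is correct and is genuinely useful — it verifies the outer terms — but it is not by itself enough to pin down $\L_1^h(A)$.

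Incidentally, note that both your cokernel computation and the paper's approach reduce in the end to the observation that $\eta C = \langle\eta\rangle$ since $\eta C$ is exactly $\eta\cdot[A] \cdot \Z$, i.e.\ the image of $\K_1(\R)\to\K_1(A)$; you use this implicitly in the final step of your paragraph, and it is worth making explicit.
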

\begin{proof}
To prove part (1), we consider the following diagram 
\[\begin{tikzcd}
	& C(A)/2 \ar[r] \ar[d] & \L_0(SA) \ar[r] \ar[d,equal] & \L_1^h(A) \ar[r,"0"] \ar[d] & C(A)[2] \ar[d, hookrightarrow] \\
	\L_2(A) \ar[r] & \K_0(A)/2 \ar[r,"\eta"] & \L_0(SA) \ar[r] & \L_1(A) \ar[r] & \K_0(A)[2] \ar[r, hookrightarrow] & \L_{-1}(SA) 
\end{tikzcd}\]
and note first that $\L_0(SA) \cong \K_1(A)$ by \cref{ThmB}.
Then we observe that the right most vertical arrow is injective, simply because $C(A) \to \K_0(A)$ is. Furthermore, the right most bottom horizontal arrow is injective, see \cite[Remark 1.10]{Rosenberg} and the argument used in the proof of \cite[Proposition 4.6]{LN} via diagram (1) therein. It follows that the right most top horizontal arrow is trivial, so that $\L_1^h(A)$ is a natural quotient of $\K_1(A)$. To see which precise quotient it is, we observe again by naturality, that the second to left most bottom horizontal arrow is given by multiplication by $\eta$: It is either that or trivial, and the case of $A= \R$ shows that the map is non-trivial since $\L_2(\R) = 0$.

To prove part (2), we consider the same exact sequences shifted in the appropriate degrees:
\[\begin{tikzcd}
	& & \L_2(SA) \ar[d, equal] \ar[r] & \L_3^h(A) \ar[r] \ar[d] & C(A)[2] \subseteq C(A) \ar[d, hookrightarrow] \\
	\L_4(A) \ar[r, two heads] & \K_0(A)/2 \ar[r,"0"] & \L_2(SA) \ar[r, hookrightarrow] & \L_3(A) \ar[r] & \K_0(A)[2] \subseteq \K_0(A)
\end{tikzcd}\]
where the left most bottom arrow is surjective by \cref{prop:boundary-maps}. Consequently, the map $\L_3^h(A) \to \L_3(A)$ is injective. The claim then follows from the isomorphism $\L_3(A) \cong \K_7(A)$ of \cref{Thm:natural-ThmB} and the fact established in \cref{prop:boundary-maps} under this isomorphism, the map $\L_3(A) \to \K_0(A)$ appearing in the above diagram as the right most bottom horizontal map is given by multiplication by $\eta$.
\end{proof}

Finally, we say as much as we can about $\L_2^h(A)$:
\begin{Prop}\label[Prop]{prop:free-L2}
Let $A$ be a unital $C^*$-algebra. Then there is an exact sequence
\[ C(A)[2] \lto \K_2(A)/\eta \stackrel{\tilde{x}}{\lto} \L_2^h(A) \lto C(A)/2 \stackrel{\eta}{\lto} \K_1(A) \]
where $\tilde{x}$ is a map whose composition with the canonical map $\L_2^h(A) \to \L_2(A) \cong \K_6(A)[\eta]$ is given by multiplication by $x$.
\end{Prop}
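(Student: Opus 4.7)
The plan is to run the same sort of comparison used for \cref{Prop:free-L-theory-degree1+3}, one degree higher: apply $\pi_*$ to the fibre sequence of \cref{prop:free-L} and compare the result with the top row of \eqref{diag:fibre-sequences}. Extracting the relevant range of the long exact sequence yields
\[ C[2] \to \L_1(SA) \to \L_2^h(A) \to C/2 \to \L_0(SA), \]
where I use that $C$ carries the trivial $C_2$-action so $\pi_\mathrm{odd}(C^{tC_2}) = C[2]$ and $\pi_\mathrm{even}(C^{tC_2}) = C/2$. Applying \cref{Thm:natural-ThmB} to $SA$, together with the $\KO$-linear suspension isomorphism $\K_n(SA) \cong \K_{n+1}(A)$, identifies $\L_0(SA) \cong \K_1(A)$ and $\L_1(SA) \cong \K_2(A)/\eta$, producing the sequence stated in the proposition.

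Next I would compare this with the analogous long exact sequence coming from the bottom row $\Sigma\L(SA) \to \L(A) \to \K_0(A)^{tC_2}$ of \eqref{diag:fibre-sequences}, via the natural map of fibre sequences (identity on $\Sigma\L(SA)$ and induced by $C \hookrightarrow \K_0(A)$ on the quotients). The rightmost map $C/2 \to \K_1(A)$ in the top sequence then factors through the analogous bottom map $\K_0(A)/2 \to \L_0(SA) \cong \K_1(A)$, which was already identified as $\eta$-multiplication in the proof of \cref{Prop:free-L-theory-degree1+3}(1); so it is $\eta$-multiplication precomposed with the canonical inclusion $C \to \K_0(A)$.

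The remaining task, and the main obstacle, is identifying the composition $\K_2(A)/\eta \xrightarrow{\tilde x} \L_2^h(A) \to \L_2(A) \cong \K_6(A)[\eta]$ as $x$-multiplication. By the same commutativity, this composition agrees with the bottom connecting homomorphism $\partial\colon \K_2(A)/\eta \to \K_6(A)[\eta]$, which is natural in $A$. Yoneda on the corepresentable source $\K_2$ shows that any such natural transformation $\K_2 \to \K_6$ is multiplication by a unique integer multiple of $x \in \K_4(\R)$; since $\eta x = 0$, such a transformation automatically factors through $\K_2/\eta$ and lands in $\K_6[\eta]$. Hence $\partial = n \cdot x$ for some integer $n$. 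To pin down $n$, I would specialise to $A = \C$: here $\K_0(\C)$ is torsion free, so the previous term $\K_0(\C)[2]$ in the bottom LES vanishes, and $\L_0(S\C) = \K_1(\C) = 0$, so by exactness together with the surjectivity coming from \cref{prop:boundary-maps}(3) the sequence reads $0 \to \Z \to \Z \to \Z/2 \to 0$, forcing $\partial$ to be multiplication by $\pm 2$ on $\K_2(\C) = \Z\{\beta_\C\}$. Since $x$ acts on $\K_*(\C)$ as $2\beta_\C^2$, this matches $x$-multiplication up to a sign of periodicity generators, completing the identification.
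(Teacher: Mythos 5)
Your proof is correct and takes essentially the same route as the paper's: inspect the long exact sequence of the fibre sequence from \cref{prop:free-L}, identify $\L_1(SA)\cong\K_2(A)/\eta$ and $\L_0(SA)\cong\K_1(A)$ via \cref{Thm:natural-ThmB}, and determine the composite $\K_2(A)/\eta\to\L_2^h(A)\to\L_2(A)$ by corepresentability of $\K_2$ together with the case $A=\C$. You fill in more bookkeeping than the paper's terse proof does — the explicit comparison with the top row of \eqref{diag:fibre-sequences} to see that $C/2\to\K_1(A)$ is $\eta$-multiplication precomposed with $C\hookrightarrow\K_0(A)$, and the explicit LES computation $0\to\Z\to\Z\to\Z/2\to 0$ at $A=\C$ — and you are right that this only pins the composite down to $\pm x$; the paper's own ``the case $A=\C$ then shows the claim'' has the same sign caveat.
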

\begin{proof}
We inspect the long exact sequence associated to the fibre sequence of \cref{prop:free-L} and use that $\L_1(SA) \cong \K_2(A)/\eta$ and $\L_0(SA) \cong \K_1(A)$ by \cref{Thm:natural-ThmB}. To see the claim about the composite of $\tilde{x}$ with the map $\L_2^h(A) \to \L_2(A) \cong \K_6(A)[\eta]$, we note that again by naturality, this composite is given by a multiple of the $x$ multiplication. The case $A= \C$ then shows the claim.
\end{proof}

\begin{Rmk}
The sequence of \cref{prop:free-L2} can of course simplify: For instance, if $C(A)$ has odd order, or when $C(A)[2] = 0$ and $0 \neq \eta \in \K_1(A)$, we find that $\L_2^h(A) \cong \K_2(A)/\eta$.
\end{Rmk}

\begin{Rmk}
The map $C(A)[2] \to \K_2(A)/\eta$ appearing in the sequence of \cref{prop:free-L2} picks out a particular element of the target (recall that $C(A)[2]$ is either cyclic of order 2 or trivial). Under the isomorphism 
\[\K_2(A)/\eta \cong \ker\big(\K_0(A_\C) \to \K_0(A)\big) \]
induced by the Wood sequence, this element is given by the composite
\[ C(A)[2] \lto \K_0(A)[2] \lto \ker(\K_0(A_\C)\to \K_0(A))\]
where we claim that the latter map is induced the canonical map $\K_0(A) \to \K_0(A_\C)$ (which, when restricted to 2-torsion lands in the indicated kernel since the composite $\K_0(A) \to \K_0(A_\C) \to \K_0(A)$ is given by multiplication by 2). Indeed, this map induces a natural map
\[ \K_1(A/2) \lto \K_0(A)[2] \lto \ker(\K_0(A_\C) \to \K_0(A)) \subseteq \K_0(A_\C)\]
which in turn determines the map in question, since the first map is surjective. This composite is determined by an element of $\K_0(\C/2) \cong \Z/2$, since the source is corepresented by $\R/2$. It then suffices to note that the map in question and the proposed map are both natural and non-trivial. To see that the map $C(A)[2] \to \K_2(A)/\eta$ appearing in \cref{prop:free-L2} is non-trivial, we can consider the case $A= \mathscr{O}_3^\C$. It satisfies $\K_0(A) = \Z/2$ and $\widetilde{\K}_0(A) = 0$. In particular $C(A) = \Z/2$ and the map $\L^h(A) \to \L(A)$ is an equivalence. Since $\K_1(A) = 0$ we deduce that $\L_1(A) \cong \L_3^h(A) = 0$. This shows that the map $C(A)[2] \to \K_2(A)/\eta$ is injective (and therefore in fact bijective). The same example also shows that the map $\K_0(A)[2] = \K_0(A) \to \K_0(A_\C)$ is non-trivial: Indeed, since $A$ is complex there is an isomorphism $A_\C \cong A \times A$ under which the map from $A$ corresponds to the diagonal.
\end{Rmk}

\begin{Rmk}
Finally, we explain that for a unital $C^*$-algebra $A$, the map $\tau \colon \k(A) \to \L(A)$, on positive homotopy groups factors canonically through the canonical map $\L^h(A) \to \L(A)$.
Indeed, we shall argue that there is a canonical map $\k^\free(A) \to \L^h(A)$, where $\k^\free(A)$ denotes the group completion of the topological category of free $A$-modules, participating in the following commutative diagram. 
\[\begin{tikzcd}
	\k^\free(A) \ar[r] \ar[d] & \L^h(A) \ar[d] \\
	\k(A) \ar[r] & \L(A)
\end{tikzcd}\]
The left vertical map is induced by the canonical inclusion of free into projective modules and induces an equivalence on connected covers. Indeed, there is a commutative diagram 
\[\begin{tikzcd}
	\k^\free(A) \ar[r] \ar[d] & \Gw^\free_\top(A) \ar[r] \ar[d] & \L^h(A) \ar[d] \\
	\k(A) \ar[r] & \Gw_\top(A) \ar[r] & \L(A)
\end{tikzcd}\]
where the maps from K-theory to Grothendieck--Witt theory equip a module over $A$ with its unique positive definite form as described earlier.

Under the isomorphisms of \cref{Prop:free-L-theory-degree1+3}, the map $\k^\free \to \L^h$ on low homotopy groups is then given as follows:
\begin{enumerate}
\item the canonical projection $\K_1(A) \to \K_1(A)/\langle \eta \rangle \cong \L_1^h(A)$, 
\item the map $\K_2(A) \to \K_2(A)/\eta \stackrel{\tilde{x}}{\to} \L_2^h(A)$ where $\tilde{x}$ is as in \cref{prop:free-L2}, and
\item the map $\K_3(A) \to \K_7(A) \times_{\K_8(A)} C(A) \cong \L_3^h(A)$ given by the $x$-multiplication -- note that $\eta x = 0$, so the $x$ multiplication on $\K_3$ indeed has image in the claimed subgroup of $\K_7(A)$.

\end{enumerate}

\end{Rmk}

\section{Examples}\label{Sec:examples}
In this section, we present a number of examples where we calculate L-groups of $C^*$-algebras. We note here, that due to the fact that the (graded) cohomological dimension of $\pi_*(\L(\R))$ is 1, an $\L(\R)$-module spectrum is (non-canonically) determined by its homotopy groups. Below, we shall therefore concentrate on calculating L-groups, and sometimes construct in addition canonical fibre sequences describing the L-spectrum. To do so efficiently, we begin with the following lemma.
\begin{Lemma}\label[Lemma]{Lemma:preparation}
We have a canonical equivalence $\Z \otimes_\ko \L(\R) \cong \L(\C)/2$.
\end{Lemma}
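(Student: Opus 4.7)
The plan is to reduce the computation to Theorem A and a presentation of $\Z$ as a $\ko$-module. Concretely, Wood's theorem gives the cofibre sequence $\Sigma\ko \xrightarrow{\eta} \ko \to \ku$ of $\ko$-modules, and then, since $\pi_*(\ku)=\Z[\beta_\C]$, the further cofibre sequence $\Sigma^2 \ku \xrightarrow{\beta_\C} \ku \to \Z$ exhibits $\Z \simeq (\ko/\eta)/\beta_\C$ as a $\ko$-module — this is the presentation already referred to earlier in the paper.

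Tensoring with $\L(\R)$ over $\ko$ and using Theorem A for the complex $C^*$-algebra $\C$ (so that $\k(\C) \simeq \ku$ and hence $\ku \otimes_\ko \L(\R) \simeq \L(\C)$), I obtain a chain of equivalences
\[
\Z \otimes_\ko \L(\R) \;\simeq\; (\ku \otimes_\ko \L(\R))/\beta_\C \;\simeq\; \L(\C)/\beta_\C,
\]
where $\beta_\C$ is considered as acting on $\L(\C)$ through the canonical $\ku$-module structure, i.e.\ via the map $\tau_\C\colon \ku \to \L(\C)$.

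The remaining step is to identify $\L(\C)/\beta_\C$ with $\L(\C)/2$. Here the key input is the explicit description of $\tau_\C$ on homotopy groups given in \cref{Rmk:complex-case} (equivalently the $n=1$ case of \cref{Rem:induced-map}): in degree $2$ the map $\tau_\C\colon \pi_2(\ku) \to \L_2(\C)$ is multiplication by $2$. Thus $\tau_\C(\beta_\C) = 2 b_\C$, where $b_\C$ denotes the generator of $\L_2(\C) \cong \Z$. Since $\L(\C)$ is $2$-periodic with periodicity implemented by $b_\C$, multiplication by $b_\C$ is an equivalence $\Sigma^2 \L(\C) \xrightarrow{\simeq} \L(\C)$. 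Consequently the map $\beta_\C = 2 b_\C \colon \Sigma^2\L(\C) \to \L(\C)$ agrees up to this equivalence with multiplication by $2$ on $\L(\C)$, and passing to cofibres yields $\L(\C)/\beta_\C \simeq \L(\C)/2$.

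There is no real obstacle; the only substantive point is the identification of $\tau_\C$ on $\pi_2$, but this is exactly what \cref{Rem:induced-map} (in its complex form) provides, and it was already known from \cite{LN}. The rest is formal manipulation of cofibre sequences of $\ko$-modules.
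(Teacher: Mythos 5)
Your proof is correct and essentially identical to the paper's: both hinge on the cofibre sequence $\Sigma^2\ku \xrightarrow{\beta_\C} \ku \to \Z$, base-change along $-\otimes_\ko \L(\R)$ combined with Theorem~A for $\C$, the identification $\tau_\C(\beta_\C) = 2b_\C$, and invertibility of $b_\C$ in $\L_*(\C)$. Your preliminary appeal to Wood's theorem ($\ku \simeq \ko/\eta$) is harmless but not actually needed, since the argument only requires the $\ku$-module presentation of $\Z$.
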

\begin{proof}
There is a canonical fibre sequence $\Sigma^2\ku \stackrel{\beta}{\to} \ku \lto \Z$. Therefore, applying $- \otimes_\ko \L(\R)$ and \cref{ThmA}, there is a canonical fibre sequence
\[ \Sigma^2 \L(\C) \stackrel{2b_\C}{\lto} \L(\C) \lto \Z \otimes_\ko \L(\R),\]
where $b_\C \in \L_2(\C)\cong \Z$ denotes the generator such that $\tau(\beta) = 2b_\C$, see \cite[Lemma 4.9]{LN}.
Since $b_\C$ is invertible in $\L_*(\C)$, the lemma follows. 
\end{proof}

\begin{example}
Let $A$ be a complex $C^*$-algebra. Then 
\[ \L(A) \simeq \k(A) \otimes_\ko \L(\R) \simeq \k(A) \otimes_\ku \ku \otimes_\ko \L(\R) = \k(A) \otimes_\ku \L(\C).\]
Furthermore, we recover that $\L_0(A) \cong \K_0(A)$ and $\L_1(A) \cong \K_1(A)$, since $\eta$ is trivial on $\ku$-modules.
\end{example}

\begin{example}\label[example]{ex:shifts-of-R}
We have $\Sigma^n \L(\R) \stackrel{\simeq}{\to} \L(\R[n])$ for $0 \leq n \leq 3$. Here, the notation $A[n]$ refers to the $n$-fold suspension of $A$ in the stable $\infty$-category $\KK$; we again note that this construction is implemented by an appropriate $C^*$-algebraic suspension, that is, $A[-1]$ is represented by $SA$. Indeed, the example follows from the fibre sequence
\[ \Sigma \L(A[-1]) \lto \L(A) \lto \K_0(A)^{tC_2} \]
obtained in \cite[Prop.\ 4.6]{LN} and the fact that $\K_n(\R) = 0$ for $n=5,6,7$. We note that the proof of \cite[Prop.\ 4.6]{LN} applies verbatim to real $C^*$-algebras, now that we know that $\L$-theory factors through $\KK$ also for real $C^*$-algebras.
\end{example}

\begin{example}
Let $\H$ be the quaternions. Then we have $\k(\H) = \ksp \simeq \tau_{\geq 0} \Omega^4 \ko$, and hence $\ell(\H) \simeq \ksp\otimes_\ko \ell(\R)$. Consequently, the L-groups are given by 
\begin{enumerate} 
	\item $\L_0(\H) = \K_0(\H) \cong \Z$,
	\item $\L_1(\H) = \coker(\K_0(\H) \to \K_1(\H)) = \coker(\Z \to 0) = 0$,
	\item $\L_2(\H) = \ker(\K_6(\H) \to \K_7(\H)) = \ker(\Z/2 \to 0) = \Z/2$, and
	\item $\L_3(\H) = \K_7(\H) = 0$.
\end{enumerate}
In addition, from the fibre sequence $\Sigma^4 \ko \to \ksp \to \Z$ and \cref{Lemma:preparation}, we obtain a canonical fibre sequence
\[  \L(\R) \lto \L(\H) \lto \L(\C)/2,\]
where the first map classifies $2$ times a generator of $\L_0(\H)$.
\end{example}

\begin{example}\label[example]{ex:final-shifts-of-R}
Since $\H \simeq \R[4]$ in $\KK$, we have already determined $\L(\R[n])$ for $0 \leq n \leq 4$. In addition, similarly as in \cref{ex:shifts-of-R}, we have that $\Sigma \L(\H) \simeq \L(\H[1]) \simeq \L(\R[5])$ since $\K_3(\R) = 0$. Since $\R[n] \simeq \R[n+8]$ in $\KK$ by real Bott periodicity, we shall now also calculate the L-groups of the remaining shifts of $\R$, namely $\R[6]$ and $\R[7]$. Here, we find
\begin{enumerate}
\item $\L_0(\R[6]) = \Z/2$ and $\L_0(\R[7]) = \Z/2$,
\item $\L_1(\R[6]) = 0$ and $\L_1(\R[7]) = 0$,
\item $\L_2(\R[6]) = \Z$ and $\L_2(\R[7]) = 0$, and
\item $\L_3(\R[6]) = \Z/2$ and $\L_3(\R[7]) = \Z$.
\end{enumerate}

\end{example}

\begin{example}
As spectra, there is an equivalence $\L(\C[1]) \simeq \Sigma \L(\C)$. However, the canonical map $\Sigma \L(\C) \to \L(\C[1])$ is \emph{not} an equivalence, as follows again from \cite[Proposition 4.6]{LN}: its cofibre is given by $\Z^{tC_2}$. In other words, the canonical map identifies with the times 2 map on $\Sigma \L(\C)$.
\end{example}

\begin{example}
Consider the algebra $C(\mathbb{T})$ of continuous real valued functions on the circle. Then we have an equivalence in $\KK$ given by $C(\mathbb{T}) = \R \oplus \R[-1]$. From this, the fact that L-theory preserves products, and \cref{ex:final-shifts-of-R} we obtain the following L-groups.
\begin{enumerate}
\item $\L_0(C(\mathbb{T})) = \Z \oplus \Z/2$, 
\item $\L_1(C(\mathbb{T})) = 0$,
\item $\L_2(C(\mathbb{T})) = 0$,
\item $\L_3(C(\mathbb{T})) = \Z$.
\end{enumerate}
\end{example}

\begin{example}
Let $G$ be a torsion-free group for which the Baum--Connes conjecture holds. Then we get 
\begin{enumerate} 
	\item $\L_0(C^*_rG) = \KO_0(BG)$,
	\item $\L_3(C^*_rG) = \KO_{-1}(BG)$,
\end{enumerate}
By Anderson duality, this shows that one can recover the abelian group $\KO^4(BG)$ from $\L_*(C^*_rG)$. Indeed, there is a short exact sequence
\[ 0 \lto \Ext^1_\Z(\KO_{-1}(BG),\Z) \lto \KO^4(BG) \lto \Hom_\Z(\KO_0(BG),\Z) \lto 0 \]
which splits non-canonically; see e.g.\ \cite{HLN} for a review of Anderson duality and the fact that the Anderson dual of $\KO$ is given by $\Omega^4 \KO$.
\end{example}

\begin{example}
The Baum--Connes conjecture is known to be true for free groups $\mathscr{F}_n$ of rank $n\geq 1$. In particular, we have an equivalence $\K(C^*_r \mathscr{F}_n) \simeq \KO \oplus \bigoplus_n \KO[1]$, and therefore obtain 
\[ \L(C^*_r \mathscr{F}_n) \simeq \L(\R) \oplus \bigoplus_n \L(\R)[1].\]
\end{example}

\begin{Rmk}
We warn the reader that, contrary to the complex case, $C^*_r\Z$ is not isomorphic to $C(\mathbb T)$, but rather to the algebra of $C_2$-equivariant continuous functions $\mathbb{T} \to \C$, where $\mathbb{T}$ and $\C$ both carry the complex conjugation action.
\end{Rmk}

\begin{example}
Let $\Sigma_g$ be an orientable surface of genus $g \geq 1$ and $\pi$ its fundamental group. The Baum--Connes conjecture is known for surface groups, so we find that 
\[\K(C^*_r \pi) \simeq \Sigma_{g,+} \otimes \KO \simeq \KO \oplus \KO[1]^{\oplus 2g} \oplus \KO[2].\]
Consequently, we obtain 
\[ \L(C^*_r \pi) \simeq \L(\R) \oplus \L(\R) \oplus \L(\R)[1]^{\oplus 2g} \oplus \L(\R)[2].\]
\end{example}

\begin{example}
Let $W$ be a right angled Coxeter group associated to a flag complex $\Sigma$ as studied e.g.\ in \cite{KLL}. Then \cite[Theorem 7.16]{KLL} shows that $\K(C^*_rW) \simeq \KO^r$, where $r$ is the number of simplices of $\Sigma$, including the empty simplex. Therefore, we find $\L(C^*_r W) \simeq \L(\R)^r$.

\end{example}

\begin{Rmk}
In fact, \cite{KLL} shows that there are explicit maps $\alpha_i \colon \R \to \R W \subseteq C^*_rW$, for $i = 1,\dots,r$, such that the induced maps 
\[ \bigoplus_r \KO \lto \K(C^*_r W) \text{ and } \bigoplus_r \L(\R) \lto \L(\R W) \]
are equivalences. Combined with \cref{ThmA}, we deduce that among the following two maps
\[ \bigoplus_r \L(\R) \lto \L(\R W) \lto \L(C^*_r W) \]
both the first map and the composite are equivalences. Therefore, so is the second map. This gives a non-trivial example where the completion conjecture of \cite[Conjecture after Cor.\ 5.7]{LN}, which states that the map $\L(\R G) \to \L(C^*_rG)$ is an equivalence after inverting 2, is in fact true \emph{without} inverting 2. 
\end{Rmk}

\begin{example}
Let $A$ be a real $C^*$-algebra equipped with an automorphism $\varphi \colon A \to A$. Then there is a fibre sequence 
\[ A \stackrel{\id-\varphi}{\lto} A \lto A \rtimes_\varphi \Z \]
in $\KK$, this is essentially the Pimsner--Voiculesu sequence in $\KK$-theory, see e.g.\ \cite[\S 19.6]{Blackadar}. If $\id-\varphi_*$ is injective on $\K_{-1}(A)$, one also obtains a fibre sequence 
\[ \k(A) \stackrel{1-\varphi_*}{\lto}  \k(A) \lto \k(A \rtimes_\varphi \Z)\]
of $\ko$-modules. Consequently, there is then also a fibre sequence
\[ \L(A) \stackrel{1-\varphi}{\lto} \L(A) \lto \L(A \rtimes_\varphi \Z).\]
More generally, there is a similar (conditional) fibre sequence describing the L-theory of reduced crossed products by free groups. 
\end{example}

\begin{example}
An example of a crossed product by $\Z$ is the real rotation algebra $A_\theta = C(\mathbb{T}) \rtimes_\theta \Z$ where $\theta$ is a real number and acts on functions on the circle group $\mathbb{T}$ by a rotation by $\theta$. However, by homotopy invariance, in $\KK$ we have an equivalence $A_\theta \simeq C(\mathbb T) \oplus C(\mathbb T)[1] \simeq \R \oplus \R[-1] \oplus \R[1] \oplus \R$. We therefore obtain
\[ \L(A_\theta) \simeq \L(\R)^{\oplus 2} \oplus \L(\R[-1]) \oplus \L(\R[1]).\]
Using our previous calculations, we finally obtain
\begin{enumerate}
\item $\L_0(A_\theta) = \Z^2\oplus \Z/2$,
\item $\L_1(A_\theta) = \Z$,
\item $\L_2(A_\theta) = 0$, and
\item $\L_3(A_\theta) = \Z$.
\end{enumerate}
We note that the order structure on $\K_0(A)$ gives additional information (also on $\theta$), and that by the isomorphism $\K_0(A) \cong \L_0(A)$, this order structure is also present in L-theory. We are not aware of a description of the order structure on L-theory which does not make use of the isomorphism to K-theory.
\end{example}

\begin{example}\label[example]{Ex:L-weaker-than-K}
We consider the real Cuntz algebras $\mathscr{O}^\R_{n+1}$. In $\KK$, there is a canonical equivalence $\mathscr{O}^\R_{n+1} \simeq \R/n$. In particular, $\k(\mathscr{O}^\R_{n+1}) \simeq \ko/n$. Therefore, we find 
\[ \L(\mathscr{O}^\R_{n+1}) \simeq \ko/n \otimes_\ko \L(\R) = \L(\R)/n.\]
Likewise, one can consider the tensor poducts $\mathscr{O}^\R_{n+1} \otimes_\R \mathscr{O}^\R_{m+1}$, where one finds 
\[ \L(\mathscr{O}^\R_{n+1} \otimes_\R \mathscr{O}^\R_{m+1}) \simeq \L(\R)/\gcd(m,n) \oplus \Sigma \L(\R)/\gcd(m,n)\]
contrary to the case of K-theory, where the real K-groups of $\mathscr{O}^\R_{n+1} \otimes_\R \mathscr{O}^\R_{m+1}$ do not only depend on the greatest common divisor of $m$ and $n$ -- simply because $(\ko/n)/m$ does not only depend on this number; see \cite{Boersema1} for explicit calculations.

Therefore, not surprisingly, L-theory of \emph{real} $C^*$-algebras is a strictly weaker invariant than K-theory: The real $C^*$-algebras $\mathscr{O}^\R_{3} \otimes_\R \mathscr{O}^\R_5$ and $\mathscr{O}^\R_{3} \otimes_\R \mathscr{O}^\R_3$ are distinguished by their K-groups, but not by their L-groups.
\end{example}

\begin{example}
Let $\mathscr{E}^\R_{2n}$ denote the simple separable nuclear real form of the Cuntz-algebra $\mathscr{O}^\C_{2n+1}$ considered in \cite{Boersema2}. Its topological K-theory is given by
\[ \K(\mathscr{E}^\R_{2n}) \simeq \KO/nx \]
where $x \in \pi_4(\KO)$ is a generator. Note that its complexification is given by $\KU/2n\beta^2 \simeq \KU/2n$, compatible with the equivalence $\K(\mathscr{O}^\C_{2n+1}) \simeq \KU/2n$. There is a fibre sequence
\[ \Z \lto \ko/nx \lto \k(\mathscr{E}^\R_{2n}) \]
where the first map induces multiplication by $4n$ on $\pi_0$. We conclude that there is a fibre sequence
\[ \L(\C)/2 \lto \L(\R)/8n \lto \L(\mathscr{E}^\R_{2n}) \]
in which the first map induces the non-zero map on $\pi_{4i}$.
In particular, the L-groups are given by 
\begin{enumerate}
\item $\L_0(\mathscr{E}^\R_{2n}) = \Z/4n$,
\item $\L_1(\mathscr{E}^\R_{2n}) = 0$,
\item $\L_2(\mathscr{E}^\R_{2n}) = \Z/2$,
\item $\L_3(\mathscr{E}^\R_{2n}) = \Z/2$.
\end{enumerate}

\end{example}

\begin{example}
We end with a number of structural examples:
\begin{enumerate}
\item The L-groups of a $C^*$-algebra with $\K_{1}(A_\C) = \K_{7}(A) = \K_{6}(A) = 0$, are concentrated in degrees divisible by $4$. 
\item If the K-groups of a $C^*$-algebra are finitely generated, then so are the L-groups.
\item If the $\K$-groups of a $C^*$-algebra vanish after inverting a number $n$ (e.g.\ when they are $n$-primary torsion), then so do the L-groups.
\item In \cref{Ex:L-weaker-than-K} we have seen that there are algebras which cannot be distinguished by their L-theories, but by their K-theories. However, we record here that a \emph{map} $f \colon A \to B$ of $C^*$-algebras induces an equivalence on K-theory if and only if it does so on L-theory: Indeed the only if part follows immediately from \cref{ThmA}, so let us argue the if part. By passing to the cofibre of the map associated to $f$ in $\KK$, we may equivalently show that $\L(A) = 0$ implies $\K(A) = 0$. By \cref{ThmB}, we deduce from $\L(A) = 0$ that $\K_n(A) = 0$ for $n=-2,-1,0,1$. By the generalized Wood sequence (see the Remark after \cref{ThmB}), we deduce that $\K(A_\C) = 0$ (the generalized Wood sequence reveals that $\K_{1}(A_\C) = 0 = \K_0(A_\C)$) and therefore that $\eta\colon \Sigma \K(A) \to \K(A)$ is an equivalence, and consequently $\eta^n$ is also an equivalence for any $n\geq 1$. However, $\eta^3 = 0$, showing that $\K(A)$ must be zero.
\end{enumerate}
\end{example}

\section{Integral Baum--Connes and Farrell--Jones comparison}\label{Sec_seven}

In \cite{LN}, we have used the equivalence $\K\adjt \simeq \L\adjt$ to compare the Baum--Connes assembly map and the L-theoretic Farrell--Jones assembly maps after inverting 2. The purpose of this section is to prove the following integral refinement of this result.
\begin{Thm}\label[Thm]{Thm:analog-ThmD}
The map $\tau \colon \k \to \L$ induces the commutative diagram
\[\begin{tikzcd}
	\ko_*^G(\underline{E}G) \ar[rr,"\mathrm{BC}"] \ar[d,"\tau"] & & \k_*(C^*_rG) \ar[d,"\tau"] \\
	\L\R^G_*(\underline{\underline{E}}G) \ar[r,"\mathrm{FJ}"] & \L_*(\R G) \ar[r] & \L_*(C^*_r G)
\end{tikzcd}\]
\end{Thm}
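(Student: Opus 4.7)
The plan is to construct the left vertical $\tau$ as a two-step composite and then to verify commutativity by splitting the target rectangle into pieces, each of which commutes by naturality of the Davis--Lück assembly. The crucial algebraic input beyond \cref{ThmA} is that for every finite subgroup $H \leq G$ the finite-dimensional real semisimple algebra $\R H$ is canonically a real $C^*$-algebra that coincides with $C^*_r H$, so the functors $H \mapsto \L(\R H)$ and $H \mapsto \L(C^*_r H)$ agree canonically on $\Orb_\Fin(G)$.

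By \cref{ThmA}, the natural transformation $\tau\colon \k \to \L$ of lax symmetric monoidal functors on $C^*$-algebras yields, for each subgroup $H \leq G$, a map $\ko(C^*_r H) \to \L(C^*_r H)$, natural in $H$. Restricting to $\Orb_\Fin(G)$ and invoking $\R H = C^*_r H$, this becomes a natural transformation of $\Orb_\Fin(G)$-spectra $\tau\colon \ko(C^*_r -) \Longrightarrow \L(\R -)$, to which the Davis--Lück construction applies. Evaluating the induced transformation of $G$-equivariant homology theories at $\underline{E}G$ gives a map $\ko^G_*(\underline{E}G) \to \L\R^G_*(\underline{E}G)$, and post-composing with the map induced by the canonical family inclusion $\underline{E}G \to \underline{\underline{E}}G$ produces the desired left vertical arrow. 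The KK-invariance of $\L$ established in \cref{Cor:KK-invariance} is what ensures that $\L(C^*_r -)$ provides valid input for the Davis--Lück machine in the first place.

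For commutativity, I would insert $\L\R^G_*(\underline{E}G)$ as an intermediate term, using once more that it equivalently computes the assembly of the functor $\L(C^*_r -)$ at $\underline{E}G$. The target rectangle then decomposes as
\[\begin{tikzcd}
	\ko^G_*(\underline{E}G) \ar[rr,"\mathrm{BC}"] \ar[d,"\tau"] & & \ko_*(C^*_rG) \ar[d,"\tau"] \\
	\L\R^G_*(\underline{E}G) \ar[rr] \ar[d] & & \L_*(C^*_rG) \ar[d,equal] \\
	\L\R^G_*(\underline{\underline{E}}G) \ar[r,"\mathrm{FJ}"] & \L_*(\R G) \ar[r,"\can"] & \L_*(C^*_rG).
\end{tikzcd}\]
The top square commutes by naturality of the Davis--Lück assembly applied to the collapse $\underline{E}G \to \ast$ with respect to the orbit-level transformation $\tau\colon \ko(C^*_r -) \Rightarrow \L(C^*_r -)$. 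The bottom portion commutes by naturality of the L-theoretic assembly with respect to the family inclusion $\underline{E}G \to \underline{\underline{E}}G$, combined with the natural transformation $\L(\R -) \Rightarrow \L(C^*_r -)$ on all of $\Orb_\Vcyc(G)$. No additional analytic input beyond \cref{ThmA} is required; the main obstacle is simply the bookkeeping between the two assembly maps and the two orbit functors $\L(\R -)$ and $\L(C^*_r -)$, which agree on the finite family but not on the virtually cyclic one.
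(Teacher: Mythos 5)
Your overall strategy — factoring the left vertical $\tau$ through $\L\R^G_*(\underline{E}G)$, decomposing the rectangle, and invoking naturality of the Davis--L\"uck assembly — is the same skeleton as the paper's argument. However, there is a genuine gap at the point where you run the Davis--L\"uck machine with the orbit-level functors $\ko(C^*_r-)$ and $\L(C^*_r-)$. To evaluate the assembly map at $G/G$ and land in $\ko_*(C^*_rG)$ and $\L_*(C^*_rG)$, you need $H \mapsto C^*_rH$ to be a spectrum-valued functor on all of $\Orb(G)$. But the paper explicitly flags that $G \mapsto C^*_rG$ is \emph{not} functorial in group homomorphisms (simplicity of $C^*_r\mathscr{F}_n$), and for exactly this reason the Davis--L\"uck picture for topological K-theory is set up with the \emph{full} group $C^*$-algebra, yielding $\BC\colon \K^G_*(\underline{E}G) \to \K_*(C^*G)$. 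Identifying this map with the classical Baum--Connes map to $\K_*(C^*_rG)$ is the content of the cited results of Kranz and [BEL], not something that falls out of the orbit-level formalism. Your appeal to KK-invariance does not repair this: $\L$ descending to $\KK$ addresses well-definedness of L-theory as a KK-functor, not the functoriality of the reduced group $C^*$-algebra construction on $\Orb(G)$.

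The fix is essentially what the paper does: prove the square with $C^*G$ in the right-hand column first (this is the unnumbered intermediate theorem, which rests on the natural transformation $\k^G \Rightarrow \L\R^G$ on $\Orb(G)$ constructed in \cite{LN}, using $\R H = C^*H$ on finite $H$, together with the full-$C^*$-algebra assembly in the K-theory row), and then obtain the stated diagram by postcomposing the right-hand column with the canonical map $C^*G \to C^*_rG$, whose compatibility with $\tau$ is just naturality of $\tau\colon \k \to \L$. Apart from this substitution of $C^*(-)$ for $C^*_r(-)$ in the orbit-level argument (with the passage to $C^*_r$ deferred to a single postcomposition at the end), your construction of the left vertical map and the bookkeeping between $\L(\R-)$ and $\L(C^*-)$ on $\Fin$ versus $\Vcyc$ are correct.
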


In order to explain the terms in the theorem, we will first briefly recall the setup for assembly maps as proposed by Davis and L\"uck \cite{DL}.

The Davis--L\"uck picture for assembly maps starts with a discrete group $G$ and an equivariant homology theory $E$, encoded as a functor $\Orb(G) \to \Sp$, where $\Orb(G)$ is the \emph{orbit category} of $G$, that is, the full subcategory of $G$-sets consisting of transitive $G$-sets (i.e.\ $G$-sets isomorphic to $G/H$ for a subgroup $H$ of $G$). A family $\cF$ of subgroups of $G$ is a collection of subgroups closed under conjugation and passing to further subgroups.
Associated to any such family $\cF$, we may form the $\cF$-orbit category $\Orb_\cF(G)$ which is the full subcategory on all transitive $G$-sets whose stabilisers belong to $\cF$ (i.e.\ $G$-sets isomorphic to $G/H$ for $H \in \cF$).
The $\cF$-assembly map for $G$ and $E$ is then given by the canonical map 
\[ E^G(E_\cF G) \stackrel{\mathrm{def}}{=} \colim\limits_{G/H \in \Orb_\cF G} E(G/H) \lto E(G/G).\]

Given an inclusion of families $\cF \subseteq \cF'$, there is an evident factorisation of the $\cF$-assembly map as follows:
\[ E^G(E_\cF G) \lto E^G(E_{\cF'}G) \lto E(G/G)\]
in which the first map is referred to as the \emph{relative} assembly map (with respect to the inclusion of families $\cF \subseteq \cF')$ and the second map is the $\cF'$-assembly map. Following standard notation we shall also write $E_{\Fin} G = \underline{E}G$ and $E_{\Vcyc}G = \underline{\underline{E}}G$.

Relevant for us will be the functors given by equivariant topological K-theory and equivariant L-theory. These are functors 
\[ \K^G,\L R^G \colon \Orb(G) \lto \Sp \quad,\quad G/H \mapsto K(C^*H), \L(R H)\]
for an involutive ring $R$, see e.g.\ \cite{LN} for further details. For the family of finite subgroups $\Fin$, we shall denote these assembly maps by 
\[ \BC \colon \K^G_*(\underline{E}G) \lto \K_*(C^* G) \quad \text{ and } \quad \FJ \colon \L R^G_*(\underline{E}G) \lto \L(RG).\]
We also note that the map $\FJ$ factors as the composite
\[ \L R^G_*(\underline{E}G) \lto \L R^G_*(\underline{\underline{E}}G) \lto \L(RG),\]
whose second map we shall later also denote by $\FJ$.

\begin{Rmk}
We warn the reader that the L-theoretic Farrell--Jones conjecture is more specifically about the assembly map for the family $\Vcyc$ of virtually cyclic subgroups and for a related (but in general different) functor $G/H \mapsto \LL^\q(R H)$ where $\LL^\q$ is the Karoubi-localisation of $\L^\q$ in the sense of \cite{CDHIV}, also known as \emph{universally decorated} L-theory, denoted by $\L^{\langle -\infty \rangle,\q}$ in the literature, where the superscript $\q$ refers to quadratic rather than symmetric L-theory. We show in \cref{Thm:assembly-symmetric-L} below, that for a regular ring $R$ and a torsion free group $G$, the Farrell--Jones conjecture implies that the map denoted $\FJ$ above is also an isomorphism (in fact, for either of the two maps denoted FJ above, as the first map in the composite is an isomorphism under the assumptions made, see \cref{Prop:relative-assembly}); to the best of our knowledge, this had not been observed so far. 
\end{Rmk}

\begin{Rmk}
The assembly map in topological K-theory described above is related to the Baum--Connes conjecture which was originally phrased in terms of equivariant Kasparov theory. First and foremost, this conjecture is more specifically about the composite
\[ \K_*^G(\underline{E}G) \stackrel{\BC}{\lto} \K_*(C^*G) \lto \K_*(C^*_r G) \]
where $C^*G \to C^*_r G$ is the canonical morphism. We note here that the association $G \mapsto C^*_r G$ is not functorial in group homomorphisms, as famously the reduced group $C^*$-algebra $C^*_r \mathscr{F}_n$ of a non-abelian free group is a simple algebra \cite{Powers}. Therefore, the Davis--L\"uck picture for the assembly map in topological K-theory uses the full group $C^*$-algebra instead.

Now, it was shown in \cite{Kranz} (and later and with different methods in \cite{BEL}) that the assembly map for $G$, the family $\Fin$ of finite subgroups of $G$, and the functor $\K^G$ is isomorphic to the Baum--Connes assembly map, and in \cite{Land} that for torsion-free groups, this assembly map has an interpretation as taking a Mishchenko--Fomenko index (this latter result was a folklore result known to the experts for a long time).

In addition, the assembly map in topological K-theory is often performed using the complex group $C^*$-algebra rather than the real one, but see \cite{Schick} for a relation between the two which says that there is a comparison square for the real and complex assembly maps, and that either of these two assembly maps is an isomorphism (in all degrees) if and only if the other is.
\end{Rmk}

The natural map $\k \to \L$ as functors on the category $\KK$ induces a natural transformation $\k^G \to \L\R^G$ as functors on the orbit category, see \cite{LN} for the details. Consequently, we obtain the following theorem, which is an integral analog of \cite[Theorem D]{LN}.
\begin{Thm}
The map $\tau \colon \k \to \L$ of \cref{Thm:existence+uniqueness-of-tau} induces the following commutative diagram.
\[\begin{tikzcd}
	\ko_*^G(\underline{E}G) \ar[rr,"\mathrm{BC}"] \ar[d,"\tau"] & & \k_*(C^*G) \ar[d,"\tau"] \\
	\L\R^G_*(\underline{\underline{E}}G) \ar[r,"\mathrm{FJ}"] & \L_*(\R G) \ar[r] & \L_*(C^*G)
\end{tikzcd}\]
\end{Thm}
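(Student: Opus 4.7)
The plan is to derive the theorem as a formal consequence of the natural transformation $\k^G \to \L\R^G$ of functors on $\Orb(G)$ induced from $\tau$, whose existence is recalled in the sentence immediately preceding the statement. The feature I will invoke is that this natural transformation, evaluated at $G/G$, produces a map $\k(C^*G) \to \L(\R G)$ whose post-composition with the canonical map $\L(\R G) \to \L(C^*G)$ agrees with $\tau\colon \k(C^*G) \to \L(C^*G)$ from \cref{Thm:existence+uniqueness-of-tau}.

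Granted this, the first step is to apply the natural transformation level-wise to the coefficient functors restricted to $\Orb_\Fin(G)$ and pass to the colimit. Since both $\BC$ and the $\Fin$-assembly map of $\L\R^G$ are, by definition, the canonical morphism $\colim_{\Orb_\Fin(G)} F(G/H) \to F(G/G)$, naturality of colimits yields the commutative square
\[
\begin{tikzcd}
\ko_*^G(\underline{E}G) \ar[r,"\BC"] \ar[d] & \k_*(C^*G) \ar[d] \\
\L\R^G_*(\underline{E}G) \ar[r] & \L_*(\R G)
\end{tikzcd}
\]
whose vertical maps come from the orbit-category natural transformation. The second step is to invoke the standard factorization of the $\Fin$-assembly map for $\L\R^G$ as the composite of the relative assembly map $\L\R^G_*(\underline{E}G) \to \L\R^G_*(\underline{\underline{E}}G)$ with $\FJ$, and to post-compose the right-hand column with $\L_*(\R G) \to \L_*(C^*G)$. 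By the compatibility property recalled above, the right-hand composite then becomes $\tau$ at $C^*G$, assembling the full diagram of the theorem.

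The main obstacle really lies upstream, in the construction of the lifted natural transformation $\k^G \to \L\R^G$ on $\Orb(G)$ with its correct compatibility at $G/G$. This input is supplied by the techniques of \cite{LN}, which organize $\k^G$ and $\L\R^G$ compatibly using the lax symmetric monoidal refinement of $\tau$ together with the Davis--L\"uck orbit-category formalism. Given this input, the present theorem reduces to the diagram chase just sketched.
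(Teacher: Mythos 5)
The proposal is correct and matches the paper's own (implicit) argument. The paper does not give a detailed proof of this theorem: it merely states that the natural transformation $\tau\colon \k \to \L$ on $\KK$ induces a natural transformation $\k^G \to \L\R^G$ on the orbit category (attributed to \cite{LN}), and asserts "consequently, we obtain the following theorem." Your write-up spells out exactly the diagram chase that is being elided — passing the orbit-category natural transformation through the $\Fin$-colimit, factoring the $\Fin$-assembly map through $\underline{\underline{E}}G$, and post-composing with $\L(\R G)\to\L(C^*G)$ — and correctly isolates the nontrivial input (the compatibility at $G/G$ with the map $\tau$ of the theorem you cite), which is indeed what \cite{LN} supplies.
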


\begin{proof}[Proof of \cref{Thm:analog-ThmD}]
There is a canonical map $C^*G \to C^*_r G$ from the full to the reduced group $C^*$-algebra. Since $\tau$ is natural, we obtain the commutative diagram
\[\begin{tikzcd}
	\ko_*^G(\underline{E}G) \ar[rr,"\mathrm{BC}"] \ar[d,"\tau"] & & \k_*(C^*G) \ar[d,"\tau"] \ar[r] & \k_*(C^*_r G) \ar[d,"\tau"]  \\
	\L\R^G_*(\underline{\underline{E}}G) \ar[r,"\mathrm{FJ}"] & \L_*(\R G) \ar[r] & \L_*(C^* G) \ar[r] & \L_*(C^*_r G)
\end{tikzcd}\]
which is the content of \cref{Thm:analog-ThmD}.
\end{proof}

\begin{Rmk}
Upon inverting $2$ and the Bott element $\beta_\R$, the diagram of \cref{Thm:analog-ThmD} becomes equivalent to the diagram
\[\begin{tikzcd}
	\KO^G_*(\underline{E}G)\adjt \ar[rr] \ar[d,"\cong"] & & \KO_*(C^*_r G)\adjt \ar[d,"\cong"] \\
	\L\R^G_*(\underline{E}G)\adjt \ar[r] & \L_*(\R G)\adjt \ar[r] & \L_*(C^*_r G)\adjt
\end{tikzcd}\]
which is the one obtained earlier in \cite[Theorem D]{LN}. This uses in particular that the comparison map $\L\R^G_*(\underline{E}G) \to \L\R^G_*(\underline{\underline{E}}G)$ is an isomorphism after inverting 2 \cite[Proposition 2.18]{Lueck-assembly}.
\cref{Thm:analog-ThmD} in addition provides some finer information about the comparison, as for instance the kernels and cokernels of the vertical maps appearing in the diagram of \cref{Thm:analog-ThmD} can be analysed by means of \cref{Rem:induced-map}
\end{Rmk}

To put \cref{Thm:analog-ThmD} into context, we note that the diagram in it participates in the following larger diagram:
\[\begin{tikzcd}
\KO_*^G(\underline{E}G) \ar[rr,"\mathrm{BC}"] & & \K_*(C^*_r G)  \\
\ko_*^G(\underline{E}G) \ar[rr] \ar[u] \ar[d,"\tau"] & & \k_*(C^*_rG) \ar[u,"\simeq_{\geq 0}"'] \ar[d] \\
\L\R_*^G(\underline{\underline{E}}G) \ar[r,"\FJ_\R"] & \L_*(\R G) \ar[r]  & \L_*(C^*_rG) \\ 
\L\Z_*^G(\underline{\underline{E}}G) \ar[r,"{\mathrm{FJ}_\Z}"] \ar[u] & \L_*(\Z G) \ar[u] \\
\end{tikzcd}\]
The map labelled $\tau$ in the diagram factors as 
\[ \ko_*^G(\underline{E}G) \lto \L\R^G_*(\underline{E}G) \lto \L\R^G_*(\underline{\underline{E}}G) \]
where the second map is split injective, see the argument below, and the first map is in principle understandable by means of \cref{ThmA} \& \ref{ThmB}. We shall argue that the full Farrell--Jones conjecture for $G$ implies that\footnote{That is, we assume that $G$ is a Farrell--Jones group in the sense of \cite[\S 5.2]{HLLRW}.} 
\begin{enumerate}
	\item\label{item1} the map $\FJ_\R$ is an isomorphism, and
	\item\label{item2} the map $\FJ_\Z$ is an isomorphism if $G$ is torsion free. 
\end{enumerate}
To see statement \eqref{item1} and the claim about the split injectivity above, we first note that there is a commutative diagram
\[\begin{tikzcd}
	\L\R^G(\underline{E}G) \ar[r] \ar[d] & \L\R^G(\underline{\underline{E}}G) \ar[r] \ar[d] & \L(\R G) \ar[d] \\ 
	\LL\R^G(\underline{E}G) \ar[r] & \LL\R^G(\underline{\underline{E}}G) \ar[r] & \LL(\R G)
\end{tikzcd}\]
where for a ring $R$, $\LL(R)$ is what is denoted by $\L^{\langle -\infty \rangle}(R)$ in the literature, see \cite[Remark 1.21]{Lueck-assembly} and \cite{CDHIV} for the notation. There is a canonical map $\L(R) \to \LL(R)$ which is an equivalence for instance if $K(R)$, the algebraic $K$-theory of $R$, is connective, see e.g.\ \cite[Remark 1.22]{Lueck-assembly}.
We claim that the vertical maps in the above diagram are all equivalences: 
Indeed, the K-theoretic Farrell--Jones conjecture together with the fact that $\R$ is a regular $\Q$-algebra and \cite[Proposition 2.14]{Lueck-assembly} implies that $K(\R G)$ is connective, so that $\L(\R G) \to \LL(\R G)$ is an equivalence, see \cite[Conjecture 3.3]{Lueck-assembly}. To see that also middle and left vertical maps are equivalences, we use the same argument for 
$G$ replaced by virtually cyclic subgroups and finite subgroups of $G$, respectively - note here that the class of group for which the (full) Farrell--Jones conjectures hold is closed under taking subgroups.

Now, the Farrell--Jones conjecture implies that the right lower horizontal map is an equivalence, and \cite[Proposition 2.16]{Lueck-assembly} states that the left lower horizontal map is split injective.

Statement \eqref{item2} requires different methods, since the Farrell--Jones conjecture is a conjecture about \emph{quadratic} L-theory whereas we make a statement about \emph{symmetric} L-theory. Note that this subtlety does not appear for group rings over $\R$ since there, quadratic and symmetric L-theory agree. We give a proof of statement \eqref{item2} in \cref{Thm:assembly-symmetric-L} below, relying on some recent developments in hermitian K-theory.
\newline

The big diagram appearing above simplifies if the group $G$ is torsion free, see also \cref{Prop:relative-assembly} below: In this case one obtains the following diagram.
\[\begin{tikzcd}
	BG\otimes \KO \ar[rr,"\mathrm{BC}"] & & \K(C^*_rG) \\
	BG\otimes \ko \ar[rr] \ar[u,"\simeq_{\geq \mathrm{dim} BG}"] \ar[d] & & \k(C^*_r G) \ar[u,"\simeq_{\geq 0}"'] \ar[d] \\
	BG\otimes \L(\R) \ar[r,"\FJ_\R"] & \L(\R G) \ar[r] & \L(C^*_r G) \\
	BG\otimes \L(\Z) \ar[r,"\FJ_\Z"] \ar[u] & \L(\Z G) \ar[u]
\end{tikzcd}\]

In addition, for torsion free groups, the Farrell--Jones conjecture in quadratic L-theory implies the one in symmetric L-theory, see the subsection below. However, the lower vertical comparison maps which change the base ring in the L-theoretic FJ conjecture from $\Z$ to $\R$ are quite subtle to analyse, in particular integrally, but even after inverting 2, see e.g.\ \cite[Remark 3.20]{Lueck-assembly}. If furthermore $BG$ has an $n$-dimensional classifying space, then the left top most vertical map is an equivalence in degrees $\geq n+1$.

\subsection*{Farrell--Jones for symmetric L-theory}
In this section we prove the following result we have indicated above and which might be of some independent interest.
\begin{Thm}\label[Thm]{Thm:assembly-symmetric-L}
Let $G$ be a torsion free group and $R$ a regular ring. Assume that the FJ conjecture holds for $G$. Then the assembly map
\[ BG \otimes \L^\s(R) \lto \L^\s(RG) \]
is an equivalence.
\end{Thm}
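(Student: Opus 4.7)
The plan is to reduce the theorem to the quadratic case of the Farrell--Jones conjecture together with the K-theoretic one, via the Ranicki symmetrization fibre sequence relating symmetric and quadratic L-theory.

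I would first exploit two inputs provided by the hypothesis that $G$ satisfies the full Farrell--Jones conjecture. Since $R$ is regular, the K-theoretic assembly $BG \otimes \K(R) \to \K(RG)$ is an equivalence; naturality of this equivalence with respect to the involutions $(-)^{\mathrm{op}}$ makes it an equivalence of $C_2$-spectra, and in particular $\K(RG)$ is connective. Secondly, the quadratic (universally decorated) L-theoretic assembly $BG \otimes \L^\q(R) \to \L^\q(RG)$ is also an equivalence. Connectivity of $\K(RG)$ ensures that the projective and universally decorated variants of $\L^\s$ and $\L^\q$ agree on both $R$ and $RG$ (cf.\ \cite[Proposition 2.14]{Lueck-assembly}), so I may work with either throughout.

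Next I would invoke the natural symmetrization fibre sequence coming from the Poincar\'e-categorical formalism of \cite{CDHII,CDHIV},
\[
\L^\q(A) \to \L^\s(A) \to \hat{\L}(A),
\]
valid for any ring $A$ with involution, where $\hat{\L}(A)$ denotes the hyperquadratic L-theory. The key identification, which follows from applying the Poincar\'e-category fibre sequence $\K_{hC_2} \to \Gw \to \L$ to both $\QF^\q$ and $\QF^\s$ and noting that the leftmost term only depends on $\K$, is that $\hat{\L}(A)$ is a Tate-type functor of $\K(A)$ as a $C_2$-spectrum. Applying this fibre sequence to $A = R$ and $A = RG$ and comparing via assembly, the theorem reduces by two-out-of-three to showing that $BG \otimes \hat{\L}(R) \to \hat{\L}(RG)$ is an equivalence.

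The remaining content then amounts to commuting the defining Tate construction with the infinite colimit $BG \otimes -$. Concretely, since K-FJ provides an equivalence of $C_2$-spectra $BG \otimes \K(R) \simeq \K(RG)$, functoriality of $\hat{\L}$ reduces matters to verifying that the canonical map $BG \otimes \K(R)^{tC_2} \to (BG \otimes \K(R))^{tC_2}$ is an equivalence. This will be the main obstacle, as it is not formal for arbitrary $BG$. It can however be carried out using that $\K(R)$ is connective: writing $BG$ as the filtered colimit of its finite skeleta, compactness of each skeleton yields the required commutation at finite level, and bounded-below convergence of the Tate construction on connective $C_2$-spectra permits passing to the colimit.
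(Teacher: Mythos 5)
Your strategy---reducing to the cofibre $\hat{\L} = \cofib(\L^\q \to \L^\s)$ via the symmetrisation sequence after feeding in both the K-theoretic and quadratic L-theoretic FJ conjectures---is a sensible plan, and your first paragraph matches the reductions the paper performs. The difficulty is the key identification you then invoke: that $\hat{\L}(A)$ is a Tate-type functor of $\K(A)$ as a $C_2$-spectrum, so that the assembly map for $\hat{\L}$ can be read off from the K-theoretic one. This is not true. Applying the fibre sequence $\K_{hC_2} \to \Gw \to \L$ to $\QF^\q$ and $\QF^\s$ only shows that $\hat{\L}(A) \simeq \cofib(\Gw^\q(A) \to \Gw^\s(A))$, and the Grothendieck--Witt spectra are themselves not K-theoretic invariants, so this tells you nothing about $\hat{\L}$ being one. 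What actually controls $\hat{\L}$ is the cofibre $T := \cofib(\QF^\q \to \QF^\s)$ of the Poincar\'e structures; since both have the same bilinear part $B$, one finds $T(P) = B(P,P)^{tC_2}$, the Tate construction on the mapping spectra of the category with duality, not on the K-theory spectrum. For $\cC = \D^p(R)$ one gets, for instance, $T(R) \simeq R^{tC_2}$, which is not a function of $\K(R)$. Substituting $\K(A)^{tC_2}$ for $\hat{\L}(A)$ amounts to assuming the homotopy limit problem for $A$, which is exactly what is unavailable over a general regular ring (indeed $\hat{\L}(\Z)$ is a $4$-periodic bounded $2$-torsion spectrum, quite unlike $\K(\Z)^{tC_2}$). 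On top of this, the concluding step---passing $(-)^{tC_2}$ through the infinite colimit $BG\otimes(-)$---does not go through: the Tate construction does not commute with filtered colimits, and connectivity of the inner term provides no uniform coconnectivity bound that would let you pass to the colimit over finite skeleta.

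The paper proves the relevant assembly statement for the difference functor by a different mechanism, avoiding any K-theoretic description of $\hat{\L}$. It compares $\L^\q$ with the \emph{visible} symmetric L-theory $\L^\vs(BG;R) = \L\bigl((\D^p(R),\QF^\s)_{BG}\bigr)$ rather than with $\L^\s(RG)$ directly. The key input, \cref{lemma:general-pullback-L-theories}, shows that for a map $\QF \to \QF'$ of Poincar\'e structures inducing an equivalence on bilinear parts, the square comparing $X \otimes \L(\cC,\QF)$ with $\L\bigl((\cC,\QF)_X\bigr)$ across $\QF$ and $\QF'$ is a pullback. This is proved using the \emph{exactness} (hence ind-representability by an object $t \in \cC$) of the cofibre $T$, which forces the relative L-theory to commute with $X\otimes(-)$; it is this representability, not any Tate-on-K description, that makes the relative L-theory a homology theory in $X$. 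Then \cref{Lemma:vs=s} identifies $\L^\vs(BG;R)$ with $\L^\s(RG)$ when $G$ has no $2$-torsion, after which the quadratic FJ conjecture and the K-theoretic one (needed to match decorations via connectivity of $\K(RG)$) are fed in much as in your plan.
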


To connect it more explicitly to the statement \eqref{item2} above, we also record here the following result.
\begin{Prop}\label[Prop]{Prop:relative-assembly}
Let $R$ be a regular ring and $G$ a torsion free group. Then the relative assembly map 
\[ BG \otimes \L^\s(R) \lto \L^\s R^G(\underline{\underline{E}}G) \]
is an equivalence.
\end{Prop}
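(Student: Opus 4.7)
The plan is to reduce to the case $G = \Z$ via the transitivity principle and then to invoke a Shaneson-type splitting. Since $G$ is torsion free, the family $\Fin$ consists only of the trivial subgroup, so $\underline{E}G = EG$ and $\L^\s R^G(\underline{E}G) \simeq BG \otimes \L^\s(R)$. The map under consideration is then the relative assembly map associated to the inclusion of families $\Fin \subseteq \Vcyc$, and by the transitivity principle for Davis--L\"uck assembly maps it is an equivalence as soon as, for every infinite cyclic subgroup $V \cong \Z$ of $G$, the $\Fin|_V$-assembly map $B\Z \otimes \L^\s(R) \to \L^\s(R\Z)$ is an equivalence.

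It therefore suffices to treat the case $G = \Z$, and here I would pass from the symmetric L-theory $\L^\s$ to its Karoubi-localising variant $\LL$. Since $R$ is regular, the Laurent extension $R\Z \cong R[t,t^{-1}]$ is regular as well, and so both $\K(R)$ and $\K(R\Z)$ are connective. By the comparison result from \cite{CDHIV}, the canonical map $\L^\s \to \LL$ is then an equivalence on both $R$ and $R\Z$, so the assembly map in question identifies with the corresponding assembly map for $\LL$. The latter is an equivalence by the Bass--Heller--Swan / Shaneson splitting $\LL(R\Z) \simeq \LL(R) \oplus \Sigma \LL(R)$ available for Karoubi-localising invariants on Laurent extensions of regular rings (with the Nil-terms vanishing by regularity).

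The main obstacle I expect is ensuring that the abstract BHS decomposition of $\LL(R\Z)$ is implemented by the geometric assembly map rather than merely giving an a priori equivalence of spectra. This compatibility is built into the construction of the BHS splitting for localising invariants: the summand inclusion $\LL(R) \hookrightarrow \LL(R\Z)$ is induced by the unit map $R \to R\Z$, while the second summand $\Sigma\LL(R) \hookrightarrow \LL(R\Z)$ is detected by the degree-one cell of $B\Z = S^1$ together with the action of the invertible element $t \in R\Z^\times$. Both are compatible, by naturality in $R$, with the corresponding summands of $B\Z \otimes \L^\s(R)$ arising from the cellular decomposition of $B\Z$, which identifies the abstract splitting with the assembly map. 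Alternatively, the same conclusion follows from known cases of the Farrell--Jones conjecture for Karoubi-localising L-theory applied to $\Z$.
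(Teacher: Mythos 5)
Your proof is correct and starts the same way as the paper's: both invoke the transitivity principle to reduce to showing, for each virtually cyclic $V \leq G$, that $BV\otimes \L^\s(R) \to \L^\s(RV)$ is an equivalence, and both observe that torsion-freeness leaves only $V = 1$ (trivial) and $V\cong\Z$. The endgame differs. The paper directly invokes the Shaneson--Ranicki splitting for symmetric L-theory, available for bordism-invariant Verdier-localising invariants of Poincar\'e categories as in \cite{CDHIV} (with \cite{MR} an earlier reference), and handles the decoration subtlety by noting that $\K_0(RV)\cong\K_0(R)$, so the projective decoration is preserved. You instead pass from $\L^\s$ to its Karoubi-localisation $\LL$ using regularity: negative K-groups of $R$ and of $R[t,t^{-1}]$ vanish, so the Rothenberg differences vanish and $\L^\s\to\LL$ is an equivalence on both, after which the Shaneson splitting for $\LL$ is unconditional. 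This is a genuinely different route that absorbs the decoration issue into the $\LL$-machinery rather than tracking it explicitly; the paper's route is shorter but requires the $\K_0$ remark.

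One terminological caution: for Laurent extensions, the obstruction to the Shaneson splitting for $\L^\s = \L^{\langle 0\rangle}$ is not a ``Nil'' term in the K-theoretic sense (there are none in L-theory of $R[t,t^{-1}]$), but a decoration shift --- the second summand appears a priori with decoration $\langle -1\rangle$ rather than $\langle 0\rangle$. Your passage to $\LL=\L^{\langle-\infty\rangle}$ is precisely what eliminates this, and your connectivity argument is what makes $\L^\s\to\LL$ an equivalence; so the parenthetical ``Nil-terms vanishing by regularity'' is better phrased as ``decoration shifts disappearing for $\LL$'' (regularity is what lets you pass to $\LL$ in the first place, not what kills the Nil terms in $\LL$). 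Also, the ``alternative'' via the Farrell--Jones conjecture for $\Z$ doesn't quite work as stated, since for $G=\Z$ the $\Vcyc$-assembly is tautologically an equivalence; the content here is the $\Fin$-assembly for $\Z$, which is exactly the Shaneson splitting and not the FJ conjecture.
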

\begin{proof}
By the transitivity principle for assembly maps \cite[Theorem 2.9]{Lueck-assembly}, we need to show that for each virtually cyclic subgroup $V$ of $G$, the assembly map $BV \otimes \L^\s(R) \to \L^\s(RV)$ is an equivalence. Now, since $G$ is torsion free, so is $V$, and therefore $V$ is either trivial or isomorphic to $\Z$ \cite[Lemma 2.15]{Lueck-assembly}. We therefore need to show that the \emph{Shaneson--Ranicki splitting} holds for symmetric L-theory, which is for instance done in the generality of bordism invariant Verdier localising invariants of Poincar\'e categorie in \cite{CDHIV}, see \cite{MR} for an earlier proof of the splitting result for symmetric L-theory. Note that we also use that $\K_0(RV)\cong \K_0(R)$ in order to ensure that no decoration problems appear in the Shaneson--Ranicki splitting.
\end{proof}

In what follows, we will freely make use of the language and notation developed in the sequence of papers \cite{CDHI,CDHII,CDHIII,CDHIV}. Suffice it to say here that for a space\footnote{here, best to be thought of as an $\infty$-groupoid} $X$ and a Poincar\'e category $(\cC,\QF)$, we write 
\[ (\cC,\QF)_X = \colim\limits_X (\cC,\QF) \]
for the tensor of $(\cC,\QF)$ with $X$. We call $(\cC,\QF)_X$ the \emph{visible} Poincar\'e category associated to $X$ and $(\cC,\QF)$, see \cite{CDHI} for some explanation of the terminology and how its L-theory connects to previously studied versions of visible L-theory. In the proof of the following result, which we initially learned from Yonatan Harpaz, we will describe the Poincar\'e category $(\cC,\QF)_X$ in more detail.
\begin{Lemma}\label[Lemma]{lemma:general-pullback-L-theories}
Let $X$ be a space, $\cC$ be a stable $\infty$-category and $\QF \to \QF'$ a map of Poincar\'e structures on $\cC$ inducing an equivalence on the bilinear parts of $\QF$ and $\QF'$. Then the diagram
\[\begin{tikzcd}
	X \otimes \L(\cC,\QF) \ar[r] \ar[d] & X \otimes \L(\cC,\QF') \ar[d] \\
	\L((\cC,\QF)_X) \ar[r] & \L((\cC,\QF')_X)
\end{tikzcd}\]
is a pullback, where the vertical maps are the assembly maps.
\end{Lemma}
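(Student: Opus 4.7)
The plan is to show the square is a pullback by identifying the common horizontal cofiber of both rows, working within the CDH framework of Poincar\'e and hermitian $\infty$-categories.

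First, I would form the fiber $\QF_0 := \mathrm{fib}(\QF \to \QF')$ in the stable $\infty$-category of hermitian structures on $\cC$. Since the map is a bilinear equivalence by assumption, $\QF_0$ has vanishing bilinear part; equivalently, $\QF_0$ is a linear (exact) functor $\cC^{\op} \to \Sp$ with trivially induced $C_2$-action, so the pair $(\cC,\QF_0)$ is hermitian but not Poincar\'e. As the visible structure construction is functorial and exact in the hermitian input, one obtains a corresponding fiber sequence $(\cC,\QF_0)_X \to (\cC,\QF)_X \to (\cC,\QF')_X$ of hermitian $\infty$-categories over $\cC_X$ whose first term again carries vanishing bilinear part.

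Next, I would invoke the extension of L-theory to a Verdier-localising invariant on hermitian $\infty$-categories developed in the later CDH papers. This extension sends fiber sequences of hermitian structures over a fixed base to fiber sequences of L-spectra, which, combined with the naturality of the assembly map $X \otimes \L(-) \to \L((-)_X)$, reduces the lemma to verifying that the assembly map is an equivalence for the degenerate pair $(\cC,\QF_0)$. For such a hermitian structure the duality acts trivially, and $\L(\cC,\QF_0)$ should admit a description as a Tate-type construction applied to the linear functor $\QF_0$, compatible with colimits in $\cC$ in a manifest manner---and in particular with the $X$-tensor---thereby yielding the desired equivalence.

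The main obstacle is this last step: making precise the claim that the L-theory of a hermitian $\infty$-category with vanishing bilinear part commutes with the $X$-tensor, and identifying the resulting assembly as an equivalence. This will require a careful application of the CDH decomposition of L-theory into contributions from the linear and bilinear parts of the Poincar\'e structure, specifically in the degenerate case where the bilinear contribution vanishes. An alternative route worth exploring is to compare the assembly cofibers for $\QF$ and $\QF'$ directly, noting that they depend only on the shared bilinear part together with the difference of linear parts, both of which interact with the $X$-tensor in a manifestly compatible way.
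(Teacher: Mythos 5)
Your high-level strategy---reduce the pullback assertion to showing that the horizontal fibers (equivalently, relative L-theory) commute with the $X$-tensor---is the same one the paper pursues, and you correctly identify that the crux lies in controlling the relative L-theory associated to $\QF \to \QF'$. But your proposal stops exactly at that crux, and it is not merely a matter of "making precise" a step you have essentially nailed down; the tools you gesture at are not the right ones.

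Two concrete issues. First, there is no "L-theory of hermitian $\infty$-categories with vanishing bilinear part" in the CDH framework that one can simply apply to $(\cC,\QF_0)$: L-theory is defined via Poincar\'e objects, which require the duality to be an equivalence, and the object you want to plug in is the \emph{relative} L-spectrum $\L(\cC;\QF,\QF')$, not the L-theory of a degenerate hermitian category. Second, the description as a "Tate-type construction applied to $\QF_0$" is not how the relative term is actually identified. The paper instead uses the explicit equalizer formula from \cite{HNS},
\[ \L(\cC;\QF,\QF') \simeq \Eq\bigl( \QF'(Dt) \rightrightarrows B_\QF(Dt,Dt) \bigr), \]
valid once the cofiber $T = \cofib(\QF \to \QF')$ is representable by $t \in \cC$; and the reduction to the representable case is itself a nontrivial step, achieved by writing $T$ as a filtered colimit of representables and noting that all constructions in sight preserve filtered colimits of Poincar\'e categories. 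Once in the representable case, one computes $D(r^*t) = r^*(Dt)$ and that $\QF_X$ and the bilinear functor are both given by colimits over $X$, which identifies $\L(\cC_X;\QF_X,\QF'_X)$ with $X \otimes \L(\cC;\QF,\QF')$. Without the filtered-colimit reduction and the equalizer formula, the argument does not close; those are the ideas your proposal is missing.
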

\begin{proof}
Let $T= \cofib(\QF \to \QF')$, which is by assumption an exact functor $\cC^\op \to \Sp$. It is therefore a filtered colimit of representables. All terms in the diagram appearing in the lemma preserve filtered colimits of Poincar\'e categories, so it suffices to prove the lemma in the case where $T$ is represented by an object $t$ of $\cC$, i.e.\ where $T = \map_\cC(-,t)$. In this case, $T_X = \cofib(\QF_X \to \QF'_X)$ is given as follows.

We recall that $\cC_X$ is the subcategory of $\Fun(X^\op,\Pro(\cC))$\footnote{Of course, $X^\op \simeq X$, but in order to get op's and colimits vs limits correct, it is best not to identify $X$ with $X^\op$ just yet.} generated under finite limits from the right Kan extensions of functors $\ast \to \cC \to \Pro(\cC)$ along inclusions $\ast \to X^\op$. We then have that for $\varphi \in \cC_X \subseteq \Fun(X^\op,\Pro(\cC))$
\[ T_X(\varphi) = \colim\limits_{x\in X} T(\varphi(x)) = \colim\limits_{x\in X} \map(\varphi(x),t) \]
so that $T_X$ is represented by the object $r^*(t)$ in $\Fun(X^\op,\cC) \subseteq \Fun(X^\op,\Pro(\cC))$, where $r\colon X \to \ast$ is the unique map. Now in general, $r^*(t)$ is not contained in $\cC_X$ (though it is the case if $X$ is compact to which the general case reduces again by using that all functors in sight preserve filtered colimits). Regardless, one can write it as a filtered colimit of objects in $\cC_X$.

The formula for relative L-theory of \cite{HNS} says that there is an equivalence
\[ \L(\cC;\QF,\QF') = \Eq\big( \QF'(Dt) \rightrightarrows B_\QF(Dt,Dt) \big),\]
where $D$ denotes the (common) duality of $\cC$ - here, one of the maps is the canonical forgetful map $\QF'(Dt) \to B_\QF(Dt,Dt)^{hC_2} \to B_\QF(Dt,Dt)$, and the other one is the canonical map $\QF'(Dt) \to B_\QF(Dt,Dt)^{hC_2} \to B_\QF(Dt,Dt)^{tC_2} \simeq \Lambda_{\QF^\s}(Dt) \to \Lambda_{T}(Dt)=\map_\C(Dt,t) \simeq B_\QF(Dt,Dt)$.
Likewise, we obtain 
\[ \L(\cC_X;\QF_X,\QF'_X) = \Eq\big( \QF'_X(D(r^*(t))) \rightrightarrows B_{\QF_X}(D(r^*(t)),D(r^*(t))) \big).\]
Now $D(r^*(t)) = r^*Dt$, and therefore, since $\QF_X(\varphi) = \colim_X \QF(\varphi(x))$, and likewise for the bilinear functor \cite[Prop.\ 6.4.3]{CDHI}, we find that 
\[ \L(\cC_X;\QF_X,\QF'_X) = \colim_X \Eq\big(\QF'(Dt) \rightrightarrows B_{\QF}(Dt,Dt) \big) = X\otimes \L(\cC;\QF,\QF') \]
and one checks that the maps are again the ones indicated above. The lemma then follows.
\end{proof}

Recall that for a ring $R$ with involution we have the stable $\infty$-category $\D^p(R)$ of perfect complexes over $R$. The involution on $R$ induces a canonical duality on $\D^p(R)$, giving rise to homotopy quadratic and homotopy symmetric Poincar\'e structures $\QF^\q$ and $\QF^\s$ which are related by the canonical symmetrisation map $\QF^\q \to \QF^\s$. This map is an equivalence on cross effects. Let us define, for ease of notation, for any space $X$, the visible symmetric and visible quadratic L-theory of $X$ with coefficients in $R$ as follows.
\[ \L^\vs(X;R) = \L((\D^p(R),\QF^\s)_X) \quad \text{ and } \L^{\vq}(X;R) = \L((\D^p(R),\QF^\q)_X).\]
By analysing the linear part of the visible Poincare structure, we find that there is a canonical map of Poincar\'e categories
\[ (\D^p(R)_X,\QF^\q) \to (\D^p(R),\QF^\q)_X \]
is an equivalence, i.e.\ that visible quadratic L-theory is simply quadratic L-theory of the category $\D^p(R)_X$ with its induced duality; we will therefore also write $\L^\q(X;R)$ for $\L^\vq(X;R)$. We note that $\D^p(R)_X \subseteq \D^p(R[\Omega X])$ so that in total we obtain an equivalence $\L^\vq(X;R) \simeq \L^\q_c(R[\Omega X])$ and using the $\pi$-$\pi$-theorem, see e.g.\ \cite[Corollary 1.2.33 (i)]{CDHIII}, even a further equivalence\footnote{under the assumption that $X$ is connected and pointed.} $\L^\q_c(R[\Omega X]) \simeq \L^\q_c(R\pi)$ where $\pi = \pi_1(X)$ and the subscript $c$ stands for an appropriate control, namely the one given by the image of the map $\K_0(R) \to \K_0(R\pi)$.

\begin{Cor}\label[Cor]{Cor:visible-pullback}
The diagram
\[ \begin{tikzcd}
	X\otimes \L^\q(R) \ar[r] \ar[d] & X \otimes \L^\s(R) \ar[d] \\
	\L^\q(X;R) \ar[r] & \L^\vs(X;R)
\end{tikzcd}\]
is a pullback.
\end{Cor}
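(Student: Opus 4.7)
The plan is to deduce this directly from Lemma \ref{lemma:general-pullback-L-theories} applied to the symmetrisation map $\QF^\s_R\colon \QF^\q \to \QF^\s$ on $\cC = \D^p(R)$. By the very definitions introduced in the paragraph preceding the corollary, the bottom right corner $\L^\vs(X;R) = \L((\D^p(R),\QF^\s)_X)$ is precisely the visible symmetric L-theory, and the bottom left corner $\L^\q(X;R) = \L((\D^p(R),\QF^\q)_X)$ agrees with the quadratic L-theory of the visible Poincar\'e category by the observation that for the quadratic structure the canonical map $(\D^p(R)_X,\QF^\q) \to (\D^p(R),\QF^\q)_X$ is an equivalence. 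So both lower corners in the diagram of the corollary are of the form appearing in the conclusion of Lemma \ref{lemma:general-pullback-L-theories}.

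Next, I would verify the hypothesis that the symmetrisation map $\QF^\q \to \QF^\s$ induces an equivalence on bilinear parts (cross effects). This is classical: the two Poincar\'e structures share the same underlying bilinear functor, namely $B_R(x,y) = \map_R(x,Dy)$ with its swap $C_2$-action, and by construction $\QF^\q = B_R^{hC_2}$ while $\QF^\s$ is built from the same $B_R$ (with hyperbolic/symmetric linear part respectively, as in \cite{CDHI}). In particular, the second cross effect of both functors is $B_R$, and the symmetrisation map induces the identity there.

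With these two inputs in place, Lemma \ref{lemma:general-pullback-L-theories} immediately gives the desired pullback square, with the vertical maps being the assembly maps as claimed. There is essentially no obstacle: the content is entirely packaged in Lemma \ref{lemma:general-pullback-L-theories}, and the corollary is just the instantiation of that lemma to the symmetrisation map of Poincar\'e structures on perfect complexes over a ring with involution.
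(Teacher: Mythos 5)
Your argument is exactly the paper's: the corollary is stated as a direct special case of \cref{lemma:general-pullback-L-theories}, and you have filled in the routine identifications of the four corners and the verification that the symmetrisation map satisfies the hypothesis of the lemma (equivalence on bilinear parts), which the paper itself records as a standing fact in the paragraph preceding the corollary. One small slip in your exposition: you wrote $\QF^\q = B_R^{hC_2}$, but it is the \emph{symmetric} Poincar\'e structure that is given by homotopy fixed points $B_R^{hC_2}$, while the quadratic one is the homotopy orbits $B_{R,hC_2}$; similarly the quadratic structure has \emph{trivial} (not hyperbolic) linear part. This does not affect your argument, since the operative claim --- that $\QF^\q$ and $\QF^\s$ share the same bilinear functor $B_R(x,y)=\map_R(x,Dy)$ and that the symmetrisation map is the identity on it --- is correct, and that is exactly what the lemma requires.
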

\begin{proof}
This is a special case of \cref{lemma:general-pullback-L-theories}.
\end{proof}

The following is now the remaining piece in the proof of \cref{Thm:assembly-symmetric-L}.
\begin{Lemma}\label[Lemma]{Lemma:vs=s}
Let $R$ be an involutive ring and $G$ be a 2-torsion free group. Then there is a canonical equivalence
\[ \L^\vs(BG;R) \lto \L^\s_c(RG).\]
Here the subscript $c$ stands for control in the subgroup $\Ima(K_0(R) \to K_0(RG)) \subseteq K_0(RG)$.
\end{Lemma}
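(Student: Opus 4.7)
The plan is to view both sides of the claimed equivalence as $\L$-spectra of Poincaré structures on a common underlying stable $\infty$-category and then compare those Poincaré structures directly. As recalled in the paragraph immediately before the lemma, $\D^p(R)_{BG}$ is fully faithful in $\D^p(R[\Omega BG]) = \D^p(RG)$, with image exactly the full subcategory $\D^p_c(RG)$ with control in the image of $K_0(R)\to K_0(RG)$. Under this identification, the tensored Poincaré structure $\QF^\s_{BG}$ and the restriction $\QF^\s|_{\D^p_c(RG)}$ of the standard symmetric structure on $\D^p(RG)$ both sit on $\D^p_c(RG)$, and there is a canonical comparison map $\QF^\s_{BG}\to\QF^\s|_{\D^p_c(RG)}$ whose effect on L-theory is the map of the lemma; it suffices to show this map is an equivalence of Poincaré structures whenever $G$ has no element of order $2$.

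I would first verify that this map is already an equivalence on bilinear parts, without any hypothesis on $G$. By \cite[Proposition 6.4.3]{CDHI} the bilinear part of $\QF^\s_{BG}$ is the colimit $\colim_{BG\times BG} B_{\QF^\s}$, and unwinding this colimit recovers the $RG$-linear mapping spectrum $\map_{RG}(-,D(-))$ that defines the bilinear part of $\QF^\s$ on $\D^p(RG)$. With the bilinear parts matched, the argument of \cref{lemma:general-pullback-L-theories} reduces the comparison to the induced map on linear parts. The linear part of $\QF^\s|_{\D^p_c(RG)}$ is the Tate spectrum $\Lambda_{\QF^\s}(M) = \map_{RG}(M,DM)^{tC_2}$, while the linear part of $\QF^\s_{BG}$ unfolds, via the same colimit formula, to a twisted Tate construction whose $C_2$-action combines the duality swap with the inversion involution $\iota\colon g\mapsto g^{-1}$ on $G$. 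Using the decomposition of the free loop space $LBG\simeq \coprod_{[g]} BC_G(g)$, one expresses the cofibre of the comparison of linear parts as a direct sum of Tate terms indexed by conjugacy classes of $\iota$-fixed elements of $G$, i.e.\ by conjugacy classes of elements of order dividing $2$.

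Under the hypothesis that $G$ is 2-torsion free the only such class is that of the identity, and this class is already matched by the visible construction, so the cofibre vanishes. It follows that $\QF^\s_{BG}\to \QF^\s|_{\D^p_c(RG)}$ is an equivalence of Poincaré structures, and passing to L-theory gives the required equivalence $\L^\vs(BG;R)\simeq \L^\s_c(RG)$. I expect the main obstacle to be the precise identification of the linear part of $\QF^\s_{BG}$ as a twisted Tate construction indexed by $LBG$ and the resulting decomposition of its cofibre in terms of 2-torsion conjugacy classes; this is an internal computation in the CDH formalism of Poincaré structures. An alternative route I would try in parallel is to feed the map $\QF^\q\to\QF^\s$ on $\D^p(R)$ into \cref{lemma:general-pullback-L-theories}, combine the resulting pullback with \cref{Cor:visible-pullback} and the $\pi$--$\pi$-theorem identification $\L^\vq(BG;R)\simeq \L^\q_c(RG)$ recalled just before the lemma, and so reduce the symmetric comparison to a statement about the cofibre of $\L^\q\to\L^\s$, which is of Tate type and can be shown to vanish using that $G$ is 2-torsion free.
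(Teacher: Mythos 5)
Your high-level strategy — realise both sides as L-theory of Poincar\'e structures on the common category $\D^p_c(RG)$, observe that the bilinear parts agree, and reduce to comparing linear parts — is exactly what the paper does. Your description of the linear part of $\QF^\s$ on $\D^p(RG)$ is also fine: indeed $\Lambda^\s(M) \simeq \map_{RG}(M,(RG)^{tC_2})$, where $RG$ carries the $C_2$-action combining the $R$-involution with inversion on $G$.

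The step that goes astray is the identification of the cofibre of the linear part comparison. The paper establishes
\[
\Lambda^{\vs}(M) \;\simeq\; \map_{RG}\big(M,\,R^{tC_2}\big),
\]
with the comparison to $\Lambda^\s$ induced by $\{e\}\hookrightarrow G$. The cofibre is therefore $\map_{RG}(-,Q)$ with $Q=\cofib\big(R^{tC_2}\to (RG)^{tC_2}\big)$, and this is computed by an entirely elementary module-level decomposition: $RG$, as an $R$-module with $C_2$-action, is free on the $C_2$-set $G$ under inversion, and so decomposes into fixed points $g\in G[2]$ (each contributing a copy of $R$) and free orbits $\{g,g^{-1}\}$ with $g\neq g^{-1}$ (each contributing $\mathrm{ind}_e^{C_2}(R)$, whose Tate construction vanishes). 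The nonvanishing summands of $Q$ are therefore indexed by the \emph{non-identity 2-torsion elements of $G$}, and this set is empty exactly when $G$ is 2-torsion free.

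Your proposed decomposition via $LBG\simeq\coprod_{[g]}BC_G(g)$ and "conjugacy classes of $\iota$-fixed elements" is not the object that appears here. The free loop space and centralisers enter when computing cyclotomic-type invariants of $RG$; the present computation is a $C_2$-Tate construction on the module $RG$ itself, indexed by $C_2$-orbits of $G$ under inversion, hence by individual 2-torsion elements rather than conjugacy classes of them. (For general $G$ the two indexings really differ, so the intermediate step you propose would give a wrong answer; the coincidence of the two conclusions for 2-torsion free $G$ is an artefact of both sets being empty.) The obstacle you flag — "the precise identification of the linear part of $\QF^\s_{BG}$ as a twisted Tate construction indexed by $LBG$" — is not a technical difficulty to be overcome but a sign that the wrong object has been invoked; the correct calculation is simpler.

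Your alternative route is also incomplete as written: \cref{Cor:visible-pullback} and the $\pi$-$\pi$ theorem give control of $\cofib\big(\L^\q_c(RG)\to \L^{\vs}(BG;R)\big)$, but one still needs an independent handle on $\cofib\big(\L^\q_c(RG)\to \L^\s_c(RG)\big)$; the comparison of those two cofibres is again precisely the comparison of the linear parts, so nothing has been saved.
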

\begin{proof}
We first note that the visible symmetric Poincar\'e structure, for connected and pointed $X$, in one on $\D^p(R)_{BG} \subseteq \D^p(RG)$ where the subcategory is the one associated to the subgroup $\Ima(K_0(R) \to K_0(RG))$. Since Poincar\'e structures extend uniquely to idempotent completions, it suffices to argue that on the category $\D^p(RG)$, the visible Poincar\'e structure $\QF^{\vs}$ agrees with the $\QF^\s$. Since the bilinear parts agree, it suffices to compare the linear terms. In this case, we have
\[ \Lambda^{\vs}(M) = \map_{RG}(M, R^{tC_2}) \quad \text{ whereas } \quad \Lambda^\s(M) = \map_{RG}(M, (RG)^{tC_2}) \]
where $RG$ has the $C_2$-action given induced by the involution on $R$ and the inversion action on $G$. The map from left to right is induced by the map $\{e\} \to G$. 
Now, as a module with $C_2$-action, $RG$ therefore decomposes according to the decomposition of $G$ into transitive $C_2$-sets as follows: 
\[ RG = \bigoplus\limits_{g \in G[2]} R \oplus \bigoplus\limits_{[g] \in G \setminus G[2]} \mathrm{ind}_e^{C_2}(R).\]
In particular, if $e$ is the only 2-torsion element in $G$, the the map $R \to RG$ induces an equivalence after applying $(-)^{tC_2}$. Therefore, in this case we find that the canonical map of Poincar\'e structures $\QF^\vs \to \QF^\s$ is an equivalence.
\end{proof}

\begin{proof}[Proof of \cref{Thm:assembly-symmetric-L}]
We consider the following commutative diagram.
\[ \begin{tikzcd}
	BG \otimes \L^\q(R) \ar[r] \ar[d] & \L^\q(RG) \ar[d] \\
	BG \otimes \LL^\q(R) \ar[r] & \LL^\q(RG)
\end{tikzcd}\]
The left vertical map is an equivalence since $\K(R)$ is assumed to be connective. The lower horizontal map is the map which is predicted to be an equivalence by the FJ conjecture. The right vertical map is an equivalence if $\K(RG)$ is connective (though this is \emph{not} and if and only if). Now the K-theoretic FJ conjecture implies that $\K(RG) \simeq BG \otimes \K(R)$ which is again connective by assumption. We conclude that the top horizontal map is an equivalence. 
Now we use the pullback diagram
\[ \begin{tikzcd}
	BG \otimes \L^\q(R) \ar[r] \ar[d] & BG\otimes \L^\s(R) \ar[d] \\
	\L^\q(BG;R) \ar[r] & \L^\vs(BG;R) 
\end{tikzcd}\]
obtained in \cref{Cor:visible-pullback} together with the equivalences $\L^\q(BG;R) \simeq \L^\q(RG)$ (which holds for all groups $G$) and the equivalence $\L^\vs(BG;R) \simeq \L^\s(RG)$ of \cref{Lemma:vs=s} (which holds for 2-torsion free groups $G$). We have argued above that the left vertical map is an equivalence, and therefore so is the right.
\end{proof}

\section{Relations to signature genera}

We now comment on a relation to previous approaches to comparing the Baum--Connes (BC) and Farrell--Jones (FJ) assembly maps and thereby analytic and surgery theoretic approaches to the Novikov conjecture. We recall here that the Novikov conjecture is implied by either of the two assembly maps being rationally injective, and that \cite[Theorem D]{LN} implies that the FJ assembly map is rationally injective if the BC assembly map is rationally injective. In several papers \cite{HR1, HR2, HR3, PS, Wahl} maps from $\L$-theory to $\K(-)\adjt$-theory have been constructed in order to get such a comparison. The idea common to those approaches is to promote the signature operator of an oriented manifold to an appropriate $\K$-theory class. We will review this operator below and connect it to our approach. Note, however, that by \cref{thm} no integral map of spectra from $\L$-theory to $\K$-theory exists and our maps $\tau_\R\colon \ko \to \L\R$ and $\tau_\C\colon \ku \to \L\C$ are indeed maps in the other direction.

\subsection*{The signature operator}
Let us first review the signature operator, see e.g.\ \cite[II.\S6, Example 6.2]{LM} and \cite[Section 1]{RW}. To this end we let $M$ be a closed, oriented, Riemannian manifold of dimension $n$. We consider the de Rham complex 
\[ 
\Omega^*(M;\C) =  \bigoplus_{i=0}^n \Omega^i(M;\C)
\]
of complex valued differential forms on $M$ with the operator $d\colon \Omega^*(M;\C) \to \Omega^*(M;\C)$. The orientation and metric induce inner products on $\Omega^*(M;\C)$ and we denote the formal adjoint of $d$ by $d^*$ as usual. We then consider the elliptic, first order differential operator $D_M := d + d^*$. With respect to the chiral grading defined below $D_M$ is called the \emph{signature operator}. We have that
\[
D_M^2 = D_M^* D_M = dd^* + d^* d =: \Delta_M \ 
\]
is the well known Laplace operator on $M$. Thus the solutions to $D_M = 0$ are given by the solutions to $\Delta_M = 0$ which are the harmonic forms. By Hodge theory, the harmonic forms are isomorphic to $\bigoplus H^*(M; \mathbb{C})$. Now we introduce the chiral $\Z/2$-grading on $\Omega^*(M)$. This is not the grading by even and odd forms (with respect to which the operator $d+d^*$ is the Euler operator whose index is the Euler characteristic). Instead, the grading operator $\tau$ is defined on a $p$-forms $\omega$ by
\[
\tau(\omega) = i^{\lceil n/2 \rceil + p (p+1) + 2p(n-p)} *\omega
\]
where $n = \dim M$ and $*$ is the Hodge-$*$-operator.
It is not hard to check that this is indeed a grading operator, i.e.\ that $\tau^2 = 1$. Also, we note if $n$ is even, then $\tau$ as defined above on $p$-forms $\omega$ satisfies
\[ \tau(\omega) = i^{n/2 + p(p-1)} *\omega.\]
This is the familiar grading operator as for instance considered in \cite[\S 1]{RW} or \cite[II.\S6, Example 6.2]{LM}.
\begin{Rmk}
One can also describe the above using Clifford algebras as follows. One has a canonical (additive) isomorphism $\varphi\colon \Omega^*(M;\C) \xto{\cong} \Gamma(\operatorname{Cliff}_{\C}(TM))$ \cite[I.\S 1, Formula (1.13)]{LM}\footnote{Here, the Clifford algebra bundle is formed using a Riemannian metric on $M$ with the relation $v^2 = - \langle v, v \rangle$ for a tangent vector $v$, i.e. we include the minus sign following \cite{LM}.}. Under this isomorphism the operator $d + d^*$ corresponds to the Dirac operator on the Clifford bundle $\operatorname{Cliff}_{\C}(TM)$, see \cite[II.\S 5, Theorem 5.12]{LM}. The chiral grading defined above then corresponds under this isomorphism to the grading induced by left multiplication with the complex volume element which is the section of $\operatorname{Cliff}_{\C}(TM)$ given locally by $i^{\lceil n/2 \rceil} e_1 \cdots e_n$ for an oriented, orthonormal frame $e_1,..., e_n$, see \cite[Section 1]{RW}. Indeed, we have to check that (locally)
\[
\varphi(\tau(\omega)) = {i^{\lceil n/2 \rceil}} e_1 \cdots e_n \cdot \varphi(\omega)
\]
for $\omega \in \Omega^p(M;\C)_m$ with $m \in M$. 
By rescaling $\omega$ and changing the local frame we may assume that $\omega = e_1^\vee \wedge ... \wedge e_p^\vee$ so that $\varphi(\omega) = e_{1}\cdots e_{p}$. Then we calculate
\begin{align*}
	i^{\lceil n/2 \rceil} e_{1}\cdots e_{n} \cdot \varphi(\omega) & = i^{\lceil n/2 \rceil}e_{1}\cdots e_{n} \cdot e_{1} \cdots e_{p} \\
	& = i^{\lceil n/2 \rceil} (-1)^{(n-1)+\dots+(n-p)}e_{p+1}\cdots e_{n}\cdot e_{1}^{2}\cdots e_{p}^{2} \\
	& = i^{\lceil n/2 \rceil} (-1)^{n+\dots+ n-p+1 - p(n-p) + p(n-p)} e_{p+1} \cdots e_{n} \\
	& = i^{\lceil n/2 \rceil} (-1)^{1+ \dots + p + p(n-p)} e_{p+1} \cdots e_{n} \\ 
	& = i^{\lceil n/2 \rceil} (-1)^{\tfrac{p(p+1)}{2} + p(n-p)} e_{p+1} \cdots e_{n} \\ 
	& = i^{\lceil n/2 \rceil + p(p+1) + 2p(n-p)} e_{p+1} \cdots e_{n} \\
	& = i^{\lceil n/2 \rceil + p(p+1) + 2p(n-p)} \cdot \varphi(e_{p+1}^{\vee} \wedge \dots \wedge e_{n}^{\vee}) \\
	& = \varphi(\tau(\omega))
\end{align*}
as claimed.

\end{Rmk}

If $n$ is even then $\tau$ anticommutes with $D_M$, so that in terms of the decomposition $\Omega^*(M;\C) = \Omega^*(M;\C)^+ \oplus \Omega^*(M;\C)^-$ the operator $D_M$ restricts to $D_M^+: \Omega^*(M;\C)^+ \to \Omega^*(M;\C)^-$. Then we find that the index 
\begin{align*}
\operatorname{ind}(D_M) &=  \dim \ker\big( \Omega^*(M;\C)^+ \stackrel{D_M^+}{\lto} \Omega^*(M;\C)^-\big) - \dim \coker\big( \Omega^*(M;\C)^- \stackrel{D_M^+}{\lto} \Omega^*(M;\C)^+\big) \\
&=  \dim \ker\big(\Omega^*(M;\C)^+ \stackrel{D_M^+}{\lto} \Omega^*(M;\C)^-\big) - \dim \ker\big( \Omega^*(M;\C)^- \stackrel{D_M^-}{\lto} \Omega^*(M;\C)^+\big)
\end{align*}
is given by the signature of the manifold $M$, hence the name signature operator. 
By means of Kasparov's model of the (complex) $K$-homology of $M$ given by $\KU_0(M) \cong \KK(C^0(M), \C)$, we see  that the operator $D_M$ with respect to the chiral grading defines a class in $\KU_0(M)$. In this picture taking the index corresponds to the pushforward $\KU_0(M) \to \KU_0(\mathrm{pt}) = \Z$.

If $n$ is odd, then $\tau$ commutes with $D_M$, and both $\tau$ and $D_M$ anti-commute with the usual even/odd grading operator $\sigma$. We can then consider the operator $D_M$ as graded via the even/odd grading, and use $\tau$ to obtain in addition an action by $\mathrm{Cl}_\C(\R)$ where the odd generator acts via $\tau$. In this way, one obtains the signature operator of $M$ as an element of
\[
\KU_1(M) = \KK(C^0(M), \mathrm{Cl}_\C(\R)) \, 
\]
see \cite[pg.\ 49]{RW}.
Finally we would like to bring the operators just constructed into the top degrees by multiplying with the Bott class. 
To this end we note that we have that 
\begin{align*}
& \KU_0(M^{2n}) \xto{\beta^n} \KU_{2n}(M^{2n}) = \ku_{2n}(M^{2n}) \\
& \KU_1(M^{2n+1}) \xto{\beta^n} \KU_{2n+1}(M^{2n+1}) = \ku_{2n+1}(M^{2n+1})
\end{align*}
where the latter equalities holds since $M$ is $2n$ and $(2n+1)$-dimensional, respectively. 

\begin{definition}\label[definition]{def:signature-class}
For any $n$-dimensional closed oriented manifold $M$ we define the class of the signature operator $[D_M]$ as the class in $\ku_n(M)$ just described.
\end{definition}

One of the main goals of this section is to prove the following result. We denote by $\sigma_\C$ the composite
\[ \MSO \stackrel{\sigma_\R}{\lto} \L(\R) \lto \L(\C) \]
of the Sullivan--Ranicki orientation with the canonical map induced by $\R \to \C$ and by $\tau_\C \colon \ku \to \L(\C)$ the canonical map from \cref{ThmA} or \cite{LN}. Both maps induce maps on homology of $M$:
\[ \MSO_n(M) \stackrel{\sigma_\C}{\lto} \ell(\C)_n(M) \stackrel{\tau_\C}{\longleftarrow} \ku_n(M) \]
again denoted by $\sigma_\C$ and $\tau_\C$, respectively. We denote by $[M] \in \MSO_n(M)$ the bordism class of the identity of $M$.
\begin{Prop}\label[Prop]{Prop_signature}
Let $M$ be an $n$-dimensional closed oriented manifold. In the group $\ell(\C)_n(M)$, we have the equality 
\[
\tau_\C( [D_M]) = 2^{\lfloor n/2 \rfloor}\cdot \sigma_{\C}([M])
\]
up to 2-power torsion, that is, the difference between the two classes is a 2-power torsion element.
\end{Prop}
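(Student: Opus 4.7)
The plan is to deduce \cref{Prop_signature} as a direct corollary of Theorem~E, which provides the commutative square of $\E_\infty$-ring spectra
\[\begin{tikzcd}
\MSO \ar[r,"{\mathscr{L}_{AS}}"] \ar[d,"\sigma_\R"] & \ko\adjt \ar[d,"\tau"] \\
\L(\R) \ar[r,"\mathrm{can}"] & \L(\R)\adjt
\end{tikzcd}\]
together with the characterising identity $\mathscr{L}_{AS}([M]) = 2^{-\lfloor n/2\rfloor}\cdot [D_M^\R]$ in $\ko_n(M)\adjt$, where $[D_M^\R] \in \ko_n(M)$ denotes the real version of the signature operator class (defined via the Dirac operator on the real Clifford bundle $\mathrm{Cliff}(TM)$ equipped with the chiral grading, then Bott--shifted to top degree).

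First I would apply the commutative square on $\MSO$-homology of $M$ at the fundamental class $[M] \in \MSO_n(M)$. Chasing $[M]$ around the diagram and substituting the formula for $\mathscr{L}_{AS}([M])$ yields
\[ 2^{-\lfloor n/2 \rfloor}\cdot \tau_\R([D_M^\R]) = \sigma_\R([M]) \qquad \text{in } \L(\R)_n(M)\adjt, \]
which, after multiplying by $2^{\lfloor n/2\rfloor}$ (permissible since $2$ is invertible in the target), rearranges to $\tau_\R([D_M^\R]) = 2^{\lfloor n/2 \rfloor}\cdot\sigma_\R([M])$. Next, post-composing with the canonical maps $\ko \to \ku$ and $\L(\R) \to \L(\C)$ induced by the $\E_\infty$-ring map $\R \to \C$---which are compatible with $\tau$ by the naturality statement of \cref{Thm:existence+uniqueness-of-tau}---and using that the image of $[D_M^\R]$ under complexification $\ko_n(M) \to \ku_n(M)$ is the complex class $[D_M]$ of \cref{def:signature-class}, one obtains
\[ \tau_\C([D_M]) = 2^{\lfloor n/2\rfloor}\cdot \sigma_\C([M]) \qquad \text{in } \L(\C)_n(M)\adjt. \]
Since $M$ is a closed manifold and $\ell(\C)$ has finitely generated homotopy groups in each degree, $\ell(\C)_n(M)$ is a finitely generated abelian group, so equality in $\L(\C)_n(M)\adjt$ is equivalent to equality in $\ell(\C)_n(M)$ up to $2$-power torsion, which is what the proposition asserts.

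The main obstacle I anticipate is the identification of the complexification of $[D_M^\R] \in \ko_n(M)$ with the complex class $[D_M] \in \ku_n(M)$ from \cref{def:signature-class}. Concretely, one has to verify that $\mathrm{Cliff}_\C(TM)$ is the complexification of the real Clifford bundle $\mathrm{Cliff}(TM)$, that the chiral grading operator $\tau = i^{p(p-1) + \lceil n/2\rceil}\ast$ is preserved (up to the evident reality conventions) under this complexification, and that the Bott shift $\beta^{\lfloor n/2\rfloor}$ used to bring the K-homology class to the top degree is compatible with the complexification map $\ko \to \ku$. This identification is purely geometric and should be routine once the conventions recalled in Section~6 are aligned, but it is the only point of the argument that goes beyond a formal invocation of Theorem~E.
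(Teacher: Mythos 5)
Your proposal tracks the same proof as the paper: invoke the commutative square from \cref{thm_comm_1}(4), chase $[M]\in\MSO_n(M)$ around it, substitute the formula relating $\mathcal{L}_{AS}([M])$ to the signature operator class, and note that equality up to $2$-power torsion is the same as equality after inverting $2$. However, you insert a detour that the paper deliberately avoids, and this detour is a genuine gap.

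The detour is your introduction of a purported real refinement $[D_M^\R]\in\ko_n(M)$ of the signature operator class, together with the claimed identity $\mathcal{L}_{AS}([M]) = 2^{-\lfloor n/2\rfloor}[D_M^\R]$ in $\ko_n(M)\adjt$. Such a class is not defined in the paper and does not in fact exist for general $n$: the chiral grading $\tau = i^{p(p-1)+\lceil n/2\rceil}\ast$ is intrinsically complex-linear, and the signature operator only refines to a class in real $\K$-homology in special residues of $n$ modulo $8$. You flag this identification as the ``main obstacle'' and optimistically declare it ``routine,'' but it is precisely the point at which the argument as you wrote it would fail for general $n$. The paper circumvents this entirely: \cref{thm_comm_1}(2) is phrased about the composite $\MSO_*(X)\to\ko\adjt_*(X)\xrightarrow{c}\ku\adjt_*(X)$ and the \emph{complex} class $[D_M]\in\ku_n(M)$, so one never needs a real lift. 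Concretely, the paper first complexifies the square of \cref{thm_comm_1}(4) (i.e.\ precomposes $\tau_\C$ with $c$ and $\sigma_\R$ with $\L(\R)\to\L(\C)$) and then invokes (2) to identify $c\,\mathcal{L}_{AS}([M]) = 2^{-\lfloor n/2\rfloor}[D_M]$ directly. If you replace the step involving $[D_M^\R]$ with a direct appeal to \cref{thm_comm_1}(2), your proof becomes identical to the paper's. (A minor further remark: your appeal to finite generation of $\ell(\C)_n(M)$ to justify the ``up to $2$-power torsion'' reformulation is unnecessary; for any abelian group, the kernel of $A\to A\adjt$ is exactly the $2$-power torsion subgroup.)
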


Before we explain how to prove this statement we would like to ask the following interesting and obvious question:
\begin{problem}
Does the equality of \cref{Prop_signature} hold integrally? 
\end{problem}

The proof of  Proposition \ref{Prop_signature} will proceed in several steps. First note that it is enough to check that the elements agree in $\ell(\C)\adjt_n(M)$ which is what we will in fact do. We first translate the statement into homotopy theory. To this end we would like to understand the signature operator in terms of genera. First recall that for each map of graded rings $\Phi\colon \MSO_* \to R_*$ one can assign a Hirzebruch characteristic series
\[
K_{\Phi}(t) = \frac{t}{\exp_{\Phi}(t)} \in (R_* \otimes \Q)\llbracket t\rrbracket
\] 
where $\exp_{\Phi}(t)$ is the inverse to the logarithm $\log_\Phi(t) = \sum_n \Phi(\C P^n) \frac{t^{n+1}}{(n+1)}$. If we give $t$ degree $-2$ then the Hirzebruch series\footnote{Topologically, the Hirzebruch series is the difference class in $H^0(\C P^\infty, R \otimes H\Q)^\times$ betwen the orientation $\Phi$ and the standard rational orientation of $R \otimes H\Q$.} $K_\Phi(t)$ is of degree $-2$. Note that since $\C P^n$ is nullbordant for odd $n$, the power series we consider here is really power series in $t^2$. We will be interested in the cases  $R_* = \KO_*$ and $R_* = \KU_*$. We can introduce the degree $0$ element $z := \beta t \in \KU_*\llbracket t\rrbracket$ and can rewrite the power series $K_\Phi(t)$ as a power series in $z$:
\[
K_\Phi(z) \in \Q\llbracket z\rrbracket
\]
Even for the case $\KO$ this works since $z^2 = \beta^2 t^2$ exists in the rationalization (recall that $\beta^2 = x/2$, for $x \in \KO_4$ as considered earlier) and the power series is really a series in $z^2$. We will thus also use this convention for $\KO$ and hope this does not lead to confusion. Proposition \ref{Prop_signature} is a consequence of the following more general result, as we will explain below.
\begin{Thm}\label[Thm]{thm_comm_1}
\begin{enumerate}
\item
There is a unique map of $\E_\infty$-rings $\mathcal{L}_{AS}\colon \MSO \to \ko\adjt$ which on homotopy groups induces the map
\[
[M^{4n}] \mapsto 2^{-2n}\beta^{2n}\sign(M^{4n}).\ \footnote{Informally, $\beta^{2n}\sign(M^{4n})$ is the signature of $M$ if $n$ is even and is 2 times the signature if $n$ is odd.} 
\]
\item
For every space $X$, the induced map $\MSO_*(X) \to \ko\adjt_*(X) \xto{c} \ku\adjt_*(X)$ takes a class $[M \xto{f} X]$ to $2^{- \lfloor n/2 \rfloor}  f_*([D_M])$ where $[D_M]$ is the signature class of \cref{def:signature-class}.
\item
The Hirzebruch characteristic series of $\mathcal{L}_{AS}$ is given by
\[
K_{\mathcal{L}_{AS}}(z) = \frac{z/2}{\tanh(z/2)} \ .
\]
\item
We have a commutative diagram
\[
\xymatrix{
\MSO\ar[r]^{ \mathcal{L}_{AS}}\ar[d]_{\sigma_\R} & \ko\adjt \ar[d]^{\tau_\R}\\
\ell(\R) \ar[r]^-{\can} & \ell(\R)\adjt
}
\]
of $\E_\infty$-ring maps. 
\end{enumerate}
\end{Thm}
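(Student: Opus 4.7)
The strategy is to construct $\mathcal{L}_{AS}$ as an explicit composite and then deduce the remaining assertions by Hirzebruch calculus and the Atiyah--Singer index theorem. First, I would observe that \cref{ThmA}, together with the summary in the introduction showing that $\tau_\R$ is a $2$-local equivalence, provides an $\E_\infty$-equivalence $\tau_\R\adjt \colon \ko\adjt \stackrel{\simeq}{\to} \L(\R)\adjt$. Combining this with the Sullivan--Ranicki orientation $\sigma_\R$, which is an $\E_\infty$-ring map by Laures--McClure \cite{LMcC1, LMcC2} (and the multiplicative structure developed in \cite{CDHIV}), I would \emph{define}
\[ \mathcal{L}_{AS} := (\tau_\R\adjt)^{-1} \circ \can \circ \sigma_\R \colon \MSO \longrightarrow \ko\adjt, \]
which is an $\E_\infty$-ring map. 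This immediately gives the commutative diagram of part (4) and establishes existence in part (1).

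To verify the formula on homotopy groups, I would trace the construction through $\pi_{4n}$: the Sullivan--Ranicki orientation sends $[M^{4n}]$ to $\sign(M)\cdot b^n$, where $b \in \L_4(\R)$ is the canonical generator. By \cref{Rem:induced-map} and \cref{Thm:natural-ThmB}, $\tau_\R$ on $\pi_{4n}$ is multiplication by $(2x)^n$ followed by the identification $\K_{8n}(\R) \cong \L_{4n}(\R)$ sending $\beta_\R^n$ to $b^n$. A direct computation yields $\tau_\R(x^n) = 2^n x^{2n} = 8^n b^n$, so $(\tau_\R\adjt)^{-1}(b^n) = x^n/8^n = 2^{-2n}\beta^{2n}$, using $x = 2\beta_\C^2$ in $\pi_4\ko\adjt$. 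Inserting this into the definition of the logarithm gives
\[ \log_{\mathcal{L}_{AS}}(t) = \sum_{n \geq 0} \frac{\beta^{2n}}{4^n(2n+1)}\,t^{2n+1} = \frac{2}{\beta}\operatorname{arctanh}(\beta t/2), \]
whose compositional inverse is $\exp_{\mathcal{L}_{AS}}(t) = \frac{2}{\beta}\tanh(\beta t/2)$, so $K_{\mathcal{L}_{AS}}(z) = (z/2)/\tanh(z/2)$, proving part (3).

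For part (2), I would compare $\mathcal{L}_{AS}\circ c$ with the map $\MSO \to \ku\adjt$ given by $[M\to X] \mapsto 2^{-\lfloor n/2\rfloor} f_*[D_M]$. This assignment arises from an $\E_\infty$-ring map after inverting $2$: multiplicativity of the signature class under products follows from Atiyah--Singer, and the normalising factor is multiplicative because $\pi_{\mathrm{odd}}\MSO\adjt = 0$. The classical computation $\operatorname{ind}(D_{\C P^{2n}}) = \sign(\C P^{2n}) = 1$ shows that its Hirzebruch characteristic series is again $(z/2)/\tanh(z/2)$. The uniqueness statement of part (1), applied with $\ku\adjt$ in place of $\ko\adjt$, then identifies the two maps and proves the formula for every $(M,f)$.

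The main obstacle is the uniqueness assertion in part (1). I would exploit that composition with the $\E_\infty$-equivalence $\tau_\R\adjt$ yields an equivalence
\[ \Map_{\E_\infty}(\MSO, \ko\adjt) \simeq \Map_{\E_\infty}(\MSO, \L(\R)\adjt), \]
reducing the problem to showing that an $\E_\infty$-ring map $\MSO \to \L(\R)\adjt$ with the prescribed effect on $\pi_*$ is unique. This in turn would follow from the uniqueness of the Sullivan--Ranicki orientation as an $\E_\infty$-map, as established in \cite{LMcC1, LMcC2} and \cite{CDHIV}: the symmetric signature refines canonically to an $\E_\infty$-orientation of $\MSO$, and any two $\E_\infty$-lifts must agree after composing with the canonical map to the $2$-inverted spectrum, since the obstructions to $\E_\infty$-rigidity are killed by inverting $2$.
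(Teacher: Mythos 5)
Your construction of $\mathcal{L}_{AS}$ as $(\tau_\R\adjt)^{-1}\circ\can\circ\sigma_\R$ is exactly what the paper does for existence, and it automatically gives part (4). Your verification of the effect on $\pi_{4n}$ and the resulting computation of $\log_{\mathcal{L}_{AS}}$ and $K_{\mathcal{L}_{AS}}$ are correct and match the paper. For part (2), your strategy of comparing with the normalised signature class via its characteristic series is also the paper's strategy (the paper cites \cite{RW} to know that $[M\to X]\mapsto 2^{-\lfloor n/2\rfloor}f_*[D_M]$ is a map of multiplicative cohomology theories; you assert it is an $\E_\infty$-ring map, which is stronger and not justified, but the weaker statement suffices provided uniqueness holds at the level of homotopy ring maps, which is how the paper states its uniqueness result).

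The genuine gap is the uniqueness assertion in part (1). You reduce it, via the equivalence $\tau_\R\adjt$, to uniqueness of an $\E_\infty$-map $\MSO\to\ell(\R)\adjt$ with prescribed effect on homotopy groups, and then claim this follows from ``uniqueness of the Sullivan--Ranicki orientation as an $\E_\infty$-map'' in \cite{LMcC1, LMcC2} and \cite{CDHIV}. Those references construct an $\E_\infty$-refinement of the symmetric signature but do not establish its uniqueness; and in any case a uniqueness statement for $\E_\infty$-maps $\MSO\to\ell(\R)\adjt$ is, via $\tau_\R\adjt$, literally the same question as the one you set out to prove, so no reduction has taken place. Your remark that ``obstructions to $\E_\infty$-rigidity are killed by inverting $2$'' is not substantiated and does not address the actual obstruction problem, which concerns the null component of $\Map_\Sp(\bspin,\gl_1(\ko\adjt))$ and is far from vacuous after inverting $2$. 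This is precisely where the paper spends most of its effort: it shows that the chain
\[
\pi_0\Map_{\E_\infty}(\MSO,\ko\adjt)\to\pi_0\Map^{\mathrm{HoRing}}_\Sp(\MSO,\ko\adjt)\to\Hom_{\mathrm{Ring}}(\MSO_*,\ko\adjt_*)\to\Q\llbracket z\rrbracket
\]
consists of injections, by passing through the Ando--Hopkins--Rezk formalism, splitting along an arithmetic fracture square for $\gl_1(\ko)\adjt$, handling the rational part by degree parity, the $p$-adic part for odd $p$ via the $K(1)$-local section of $\Sigma^\infty_+\BSpin\to\bspin$ coming from the Bousfield--Kuhn functor, and finishing the second map with Anderson's theorem that $\KU\otimes BG$ is a filtered colimit of free $\KU$-modules for $G$ a compact connected Lie group. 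None of this appears in your proposal; without it, the proof of the uniqueness half of (1) is missing, and consequently the identification in (2) (which relies on uniqueness, applied moreover with target $\ku\adjt$) is also incomplete.
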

\begin{Rmk}
Before we prove this theorem, we note that the the genus associated with $\mathcal{L}_{AS}$ is not quite the ordinary signature genus since there are powers of $2$ appearing. In fact, the characteristic series of the ordinary signature genus is $z / \tanh(z)$ by Hirzebruch's signature theorem.
The genus we consider here first came up (to the best of our knowledge) in Atiyah-Singer's deduction of Hirzebruch's signature theorem using their index theorem, see \cite{MR236952}, specifically in equation (6.5) on page 577 in loc.\ cit.\ and the discussion around it for the powers of 2 that appear. Therefore it is not surprising that this genus shows up here. Similar genera have also been considered by Sullivan to construct a version of the orientation $\sigma_\R$, see e.g.\ \cite[\S 5.A]{MR548575}. Therefore, \cref{thm_comm_1} might not come as a surprise to the experts. However, a highly structured statement as Theorem \ref{thm_comm_1} (4) is only possible since we have also constructed the map $\ko\adjt \to \ell(\R)\adjt$ as a map of $\E_\infty$-ring spectra. 
\end{Rmk}

\begin{proof}[Proof of \cref{Prop_signature}]
First, we note that the statement of \cref{Prop_signature} is equivalent to showing the claimed equality after inverting 2.
By (4) of \cref{thm_comm_1}, we also have a commutative diagram of $\E_\infty$-ring spectra
\[ \begin{tikzcd}
	\MSO \ar[r,"c\mathcal{L}_{AS}"] \ar[d,"\sigma_\C"] & \ku\adjt \ar[d,"\tau_\C"] \\
	\ell(\C) \ar[r,"\can"] & \ell(\C)\adjt
\end{tikzcd}\]
Thus, for an $n$-dimensional closed oriented manifold $M$, we have the commutative diagram
\[\begin{tikzcd}
	\MSO_n(M) \ar[r,"c\mathcal{L}_{AS}"] \ar[d,"\sigma_\C"] & \ku_n(M)\adjt \ar[d,"\tau_\C"] \\
	\ell(\C)_n(M) \ar[r,"\can"] & \ell(\C)_n(M)\adjt
\end{tikzcd}\]
By (2) of \cref{thm_comm_1}, the top right composite sends $[M]$ to $2^{-\lfloor n/2 \rfloor}\tau_\C ([D_M])$, so the commutativity of the above diagram indeed gives the equality 
\[ 2^{\lfloor n/2 \rfloor}\cdot \sigma_\C([M]) = \tau_\C([D_M])\]
in $\ell(\C)_n(M)\adjt$ as claimed. 
\end{proof}

\begin{proof}[Proof of \cref{thm_comm_1}]
We first prove the uniqueness statement involved in part (1). In fact we also prove the uniqueness result involving homotopy ring maps. 
To this end we consider the maps 
\begin{align*}
\pi_0\big( \Map_{\E_\infty}(\MSO, \ko\adjt)\big) &\longrightarrow \pi_0\big(\Map_\Sp^{\mathrm{HoRing}}(\MSO, \ko\adjt)\big) \\
& \xto{\pi_*} \Hom_{\mathrm{Ring}}(\MSO_*, \ko\adjt_*) \\
& \xto{K} \Q\llbracket z \rrbracket
\end{align*}
where $\Map^{\mathrm{HoRing}}$ denotes the connected components of the space of maps of spectra that are homotopy ring maps and the first map simply forgets the $\E_\infty$-structure.

Our claim is that all these maps are injective. For the last map $K$ the assertion is true since $\ko_*\adjt$ injects into $\KO_* \otimes \Q$ and rationally, we can reconstruct a genus from its characteristic series. In order to show injectivity of the first two maps it suffices to show that the composites 
\[
\pi_0\big( \Map_{\E_\infty}(\MSO, \ko\adjt)\big) \to \Q\llbracket z\rrbracket \qquad  \pi_0\big(\Map_\Sp^{\mathrm{HoRing}}(\MSO, \ko\adjt)\big) \to \Q\llbracket z\rrbracket 
\]
are injective. This will follow from the theory of orientations developed in \cite{MR0494077} and further in \cite{AHR} as we explain now.
First, we note that we can localize away from $2$ and that the canonical map $\MSpin\adjt \to \MSO\adjt$ is an equivalence. We may therefore replace $\MSO$ above with $\MSpin$.
For any pair of $\E_\infty$-maps $f,g\colon \MSpin \to \ko\adjt$ we have a difference map $f/g\colon \mathrm{bspin} \to \mathrm{gl}_1(\ko\adjt)$ which is a map of spectra. More precisely the space of $\E_\infty$-ring maps $\MSpin \to \ko\adjt$ is a torsor over the space of spectrum maps $\mathrm{bspin} \to \mathrm{gl}_1(\ko\adjt)$. 
Similarly in the case of homotopy ring maps the difference map is an $H$-space map $\mathrm{Bspin} \to \mathrm{Gl}_1(\ko\adjt)$, where $\mathrm{BSpin}$ and $\mathrm{Gl}_1(\ko\adjt)$ denote the infinite loop spaces of the spectra $\mathrm{bspin}$ and $\mathrm{gl}_1(\ko\adjt)$, respectively. 

Moreover from the difference class $f/g$ we can recover the quotient $K_f / K_g$ since $K_f$ was itself constructed rationally as a difference class of $f$ with the standard rational orientation $\MSO \to H\Q \to \ko_\Q$. Thus the whole statement is implied by showing that the canonical maps
\begin{equation}\label{maps-to-investigate}
\pi_0\Map_\Sp(\bspin, \mathrm{gl}_1(\ko\adjt)) \to \pi_0\Map^{H}_{\Spc}(\BSpin, \mathrm{Gl}_1(\ko\adjt)) \to  \pi_0\Map^{H}_{\Spc}(\BSpin, \mathrm{Gl}_1(\ko_\Q))
\end{equation}
are injective. Since $\bspin$ is 3-connected, we note that all mapping spaces do not change when replacing $\gl_1(\ko\adjt)$ with $\tau_{\geq 1}\gl_1(\ko\adjt) \simeq \tau_{\geq1}\gl_1(\ko)\adjt$, and using connectedness of $\bspin$ again, we may replace $\gl_1(\ko\adjt)$ with $\gl_1(\ko)\adjt$ and similarly $\gl_1(\ko_\Q)$ with $\gl_1(\ko)_\Q$. Moreover, since $H$-space maps form a collection of connected components inside the space of all maps, we may also neglect the superscript $H$. The first of the two maps in \eqref{maps-to-investigate} is then induced by the canonical map of spectra $\Sigma^\infty_+ \BSpin \to \bspin$, the counit of the $(\Sigma^\infty_+,\Omega^\infty)$-adjunction. We then consider the following fracture square pullback.
\[ \begin{tikzcd}
	\gl_1(\ko)\adjt \ar[r] \ar[d] & \prod\limits_{p \neq 2} \gl_1(\ko)^\cwedge_p \ar[d] \\
	\gl_1(\ko)_\Q \ar[r] & \Big[ \prod\limits_{p\neq 2} \gl_1(\ko)^\cwedge_p \Big]_\Q
\end{tikzcd}\]
Mapping $\bspin$ and $\Sigma^\infty_+ \BSpin$ into this pullback, we obtain pullback descriptions for both mapping spaces in question. We then observe that 
\[\pi_1(\Map_\Sp(\bspin,\Big[ \prod\limits_{p\neq 2} \gl_1(\ko)^\cwedge_p \Big]_\Q)) = 0 = \pi_1(\Map_\Sp(\Sigma^\infty_+\BSpin,\Big[ \prod\limits_{p\neq 2} \gl_1(\ko)^\cwedge_p \Big]_\Q)) \]
This follows simply from the observation that the homotopy groups of the rational spectrum $\big[ \prod\limits_{p\neq 2} \gl_1(\ko)^\cwedge_p \big]_\Q$ are concentrated in degrees $4k$, the rational homotopy of $\bspin$ is in degrees $4k$ and $\BSpin$ has rational cohomology also concentrated in degrees $4k$. We deduce that for $X = \bspin$ and $X= \Sigma^\infty_+ \BSpin$, the canonical map
\[ \pi_0\Map_\Sp(X,\gl_1(\ko)\adjt) \lto \pi_0\Map_\Sp(X,\gl_1(\ko)_\Q) \times \prod\limits_{p\neq 2} \pi_0\Map_{\Sp}(X,\gl_1(\ko)^\cwedge_p) \]
is injective. It therefore suffices to argue that the map $\Sigma^\infty_+ \BSpin \to \bspin$ induces a $\pi_0$ injection upon mapping to $\gl_1(\ko)_\Q$ and $\gl_1(\ko)^\cwedge_p$ for all $p\neq 2$ individually. Note that $\gl_1(\ko)_\Q = \prod\limits_{k\geq 1} H\Q[4k]$, so that it suffices to know that the map $\Sigma^\infty_+\Omega^\infty X \to X$ induces an injection on rational cohomology in all degrees, for all connective spectra $X$. To treat the $p$-adic case, we recall from \cite[Theorem 4.11]{AHR}, applied to $\KO^\cwedge_p$, that $\gl_1(\KO)^\cwedge_p \to L_{K(1)} \gl_1(\KO) \simeq \KO^\cwedge_p$ is 1-truncated. Using again that $\bspin$ is 3-connected, it suffices now to show that the map $\Sigma^\infty_+\BSpin \to \bspin$ induces a $\pi_0$-injection on mapping spaces to $\KO^\cwedge_p$. Since this spectrum is $K(1)$-local, it finally suffices to note that the map $\Sigma^\infty_+\BSpin \to \bspin$ has a $K(1)$-local section. This is a consequence of the fact that $L_{K(1)} = \Phi\Omega^\infty$, where $\Phi$ is the Bousfield--Kuhn functor, see \cite[proof of Prop.\ 2.9]{LMMT} for details. This shows that the first map of \eqref{maps-to-investigate} is injective as claimed.

We turn to the second map of \eqref{maps-to-investigate}. Using again the fracture square for $\gl_1(\ko)\adjt$ as before, the statement follows if we can show that for $F$ the fibre of the map
\[ \prod\limits_{p\neq 2} \gl_1(\ko)^\cwedge_p \lto \Big[ \prod\limits_{p \neq 2} \gl_1(\ko)^\cwedge_p\Big]_\Q \]
we have 
\[ \pi_0(\Map_\Sp(\Sigma^\infty_+\BSpin,F)) = 0.\]
With a similar argument as before, using again that $\BSpin$ is 3-connected, we may equivalently replace $F$ by the fibre of the map 
\[ \prod\limits_{p \neq 2} \KO^\cwedge_p \lto \Big[ \prod\limits_{p \neq 2} \KO^\cwedge_p \Big]_\Q \]
which is, again by a fracture square argument the same fibre as that of the map $\KO\adjt \to \KO_\Q$. Since this map is a retract of $\KU\adjt \to \KU_\Q$, it finally suffices to show that \[\pi_0(\Map_\Sp(\Sigma^\infty_+\BSpin, \mathrm{fib}(\KU\adjt \to \KU_\Q)))=0.\]
Since $\BSpin$ has even rational cohomology, this is equivalent to showing that the map 
\[ \pi_0(\Map_\Sp(\Sigma^\infty_+ \BSpin,\KU\adjt)) \lto \pi_0(\Map_\Sp(\Sigma^\infty_+\BSpin,\KU_\Q)) \]
is injective. For this, we recall that $\BSpin \simeq \colim_n \BSpin(n)$ 
and that $\Spin(n)$ is a compact connected Lie group. In \cite{Anderson}, Anderson shows that for any compact connected Lie group $G$, we have that $\KU\otimes BG$ is a filtered colimit of direct sums of $\KU$, with split injective transition maps. Consequently, we find that for any $\KU$-module $M$, we have
\[ \Map_\Sp(\Sigma^\infty_+ \BSpin(n),M) = \Map_\KU(\colim\limits_{i\in I} \bigoplus\limits_{A_i} \KU,M) = \lim\limits_{i \in I} \prod\limits_{A_i} M\]
naturally in $M$. In particular, the map 
\[ \Map_\Sp(\Sigma^\infty_+\BSpin(n),\KU\adjt) \lto \Map_\Sp(\Sigma^\infty_+\BSpin(n),\KU_\Q) \]
identifies with the map
\[ \lim\limits_{i\in I} \prod\limits_{i \in A_i} \KU\adjt \lto \lim\limits_{i \in I} \prod\limits_{i \in A_i} \KU_\Q\]
which is injective on $\pi_0$. Finally, the map we wish to show is injective identifies with the map on $\pi_0$ induced by the map
\[ \lim\limits_n \Map_\Sp(\Sigma^\infty_+\BSpin(n),\KU\adjt) \lto \lim\limits_n \Map_\Sp(\Sigma^\infty_+\BSpin(n),\KU_\Q).\]
Now, since for each $n$ both mapping spaces which appear have no odd homotopy groups, the $\lim$-$\lim^1$-sequence shows that the map we wish to show is injective identifies with the map
\[ \lim\limits_n \pi_0\Map_\Sp(\Sigma^\infty_+\BSpin(n),\KU\adjt) \lto \lim\limits_n \pi_0\Map_\Sp(\Sigma^\infty_+\BSpin(n),\KU_\Q).\]
This is an inverse limit of injective maps, and hence itself injective as claimed.
In particular, we have shown that there is at most one $\E_\infty$-map as in (1).

For the existence of this map, Ando-Hopkins-Rezk \cite[Theorem 6.1]{AHR} give a concrete criterion in terms of certain $p$-adic congruences, see also \cite[Theorem 3.1.1]{Sprang}. One could simply verify these directly, which is for instance done by Wilson in \cite[Theorem 5.5]{Wilson}. Instead of using this calculation, we will proceed differently and simply use the square of part (4) as a proof of the existence of an $\E_\infty$-map with the correct effect on homotopy groups, since the right vertical map in it is an equivalence. This then also shows the commutativity of (4) immediately. 

In order to prove statement (2), we use that the assignment 
\[
\MSO_n(X) \lto \ku\adjt_n(X) \qquad (M \xto{f} X) \mapsto 2^{- \lfloor n/2 \rfloor}  f_*([D_M])
\]
is a map of multiplicative cohomology theories as shown in \cite{RW}, specifically see Remark 4 and Lemma 6 in loc.\ cit.
Thus by the previous results it is enough to check that it agrees with the map of part (1) on coeffcients, which is true by construction. 

For (3) we want to compute the Hirzebruch series of the map $\mathcal{L}_{AS}$. 
We find that
\[
\mathcal{L}_{AS}(\C P^n) = \begin{cases}
2^{-n} \beta^n & n \text{ even} \\
0 & n \text{ odd}
\end{cases}
\]
where $\beta = \beta_\C$ is the complex Bott element. 
Thus we get that 
\begin{align*}
\log_{\mathcal{L}_{AS}}(t) &= t + \tfrac{\beta^2}{2^2} \cdot \tfrac{t^3} 3 +  \tfrac{\beta^4}{2^4} \cdot \tfrac{t^5} 5 + \tfrac{\beta^6}{2^6} \cdot \tfrac{t^7} 7 + ...  \\
 & = \tfrac 2 \beta \cdot  ((\beta t /2) + \tfrac{(\beta t /2)^3} 3 +  \tfrac{(\beta t /2)^5} 5 + \tfrac{(\beta t/2)^7} 7 + ...) \\
 & =  \tfrac 2 \beta  \tanh^{-1}(\beta t/2) 
\end{align*}
The inverse of this power series (with respect to composition) is given by
\[ \exp_{\mathcal{L}_{AS}}(t) = \tfrac 2 \beta \tanh(\beta t /2) \]
as one directly verifies. Therefore we get 
\[ K_{\mathcal{L}_{AS}}(t) = \frac{ \beta t / 2 } { \tanh(\beta t /2)} =  \frac{ z/2 } { \tanh(z/2)} \]
where we recall that $z = \beta t$.
\end{proof}

\begin{Rmk}\label[Rmk]{Rmk:Warning}
In \cite[Theorem 5.7]{Wilson}, Wilson writes that in addition to the map $\mathcal{L}_{AS}\colon \MSpin \to \ko\adjt$ described above, there also exists an integral $\E_\infty$-map $\mathcal{L}_H\colon \MSpin \to \ko$ sending a spin manifold $M^{4n}$ to $\beta^{2n}\sign(M)$. This, however, is not correct, and the map $\mathcal{L}_H$, as an $\E_\infty$-map or equivalently by proof of \cref{thm_comm_1} as a map of homotopy ring spectra, indeed only exists after inverting 2. The fact that it does exist after inverting 2 can be shown using the criterion of Ando--Hopkins--Rezk  \cite[Theorem 6.1]{AHR}, or by postcomposing $\mathcal{L}_{AS}$ with the Adams operation $\psi^2 \colon \ko\adjt \to \ko\adjt$. We thank Johannes Sprang for explaining to us the following argument that the map does not exist at {the} prime 2. To explain this, we recall again the general result of Ando--Hopkins--Rezk: It says that the connected components of the space of $\E_\infty$-maps $\MSpin \to \ko$ are in bijection to the set of sequences\footnote{For an $\E_\infty$-map $\MSpin \to \ko$, this sequence is given by the coefficients in the characteristic series as described above.} $(b_k)_{k\geq 2} \in \mathbb Q$ satisfying the following conditions:
\begin{enumerate}
\item $b_{2k+1} = 0$ for $k\geq 1$,
\item $b_{2k} \equiv - \frac{B_{2k}}{2k} \mod \mathbb Z$, and
\item for every prime $p$ and every element $c \in \Z_p^\times/\{\pm 1\}$, there exists a $p$-adic measure $\mu$ on $\Z_p^\times/\{\pm 1\}$ such that for all $k \geq 1$ one has\footnote{We remark that $(1-c^{2k})b_{2k}$ is a $p$-adic integer.}
\[ (1-p^{2k-1})(1-c^{2k})b_{2k} = \int_{\Z_p^\times/\{\pm 1\}} x^{2k}d\mu(x),\]
see \cite[Definition 2.1.4]{Sprang} for the notion of $p$-adic measures on profinite groups such as $\Z_p^\times/\{\pm 1\}$.
\end{enumerate}
The sequence relevant for realising the map sending $M^{4n}$ to $\beta^{2n}\sign(M)$ as an $\E_\infty$-map $\MSpin \to \ko$ is the sequence
\[ b_{k} := \tfrac{2^{k+1}(2^{k-1}-1)}{2k}B_k.\]
Conditions (1) and (2) above are indeed satisfied, see \cite[Proposition A.3]{Wilson} for (2) and (1) follows from the same property for the Bernoulli numbers $B_{k}$. Now, at prime 2, one observes that the sequence
\[ (1-2^{2k-1})(1-c^{2k})b_{2k} \]
converges to zero in $\Z_2$. However, a sequence of moments, i.e.\ a sequence of the form 
\[ \int_{\Z_p^\times/\{\pm 1\}} x^{2k}d\mu(x) \]
converges to zero only if it is constantly zero. Indeed, $x\mapsto x^{2k+2^r} - x^{2k}$ for $x \in \Z_2^\times$ takes values in $2^r \Z_2$. Consequently, 
\[ |\int_{\Z_p^\times/\{\pm 1\}} x^{2k} d\mu(x)|_2 = \lim_{r\to \infty} |\int_{\Z_p^\times/\{\pm1\}} x^{2k+\phi(2^r)}|_2 \]
where $|-|_2$ denotes the 2-adic valuation, and the latter term is zero if we assume that the sequence of moments converges to zero. Now, the sequence we need to investigate converges to zero, but is not constantly zero, and is therefore not a sequence of moments.
\end{Rmk}

We finish this section by noting that  there is a commutative diagram

\[ \begin{tikzcd}
	\MSO \ar[r,"\mathcal{L}_H"] \ar[d,"\sigma_\R"] \ar[rr, bend left, "\mathcal{L}_{AS}"] & \ko\adjt \ar[r,"\psi^{-2}"] \ar[d,"\tau_\R"] & \ko\adjt \ar[d,"\tau_\R"] \\
	\ell(\R) \ar[r,"\alpha"] \ar[rr, bend right, "\mathrm{can}"] & \ell(\R)\adjt \ar[r,"\psi^{-2}"] & \ell(\R)\adjt
\end{tikzcd}\]
where we denote by $\psi^{-2}$ also the induced (inverse) Adams operation on $\ell(\R)\adjt$. The resulting map $\alpha$ is then given by the right-down composite in the diagram
\[ \begin{tikzcd}
	\ell(\R) \ar[r,"\mathrm{can}"] \ar[d, dashed,"\psi^2"] & \ell(\R)\adjt \ar[d,"\psi^2"] \\
	\ell(\R) \ar[r,"\mathrm{can}"] & \ell(\R)\adjt
\end{tikzcd}\]

\begin{question}
Does there exist an $\E_\infty$-map $\psi^2\colon \ell(\R) \to \ell(\R)$ rendering the above diagram commutative? Likewise, does there exist an $\E_\infty$-map $\psi^2 \colon \ell(\C) \to \ell(\C)$ rendering the analogous diagram
\[ \begin{tikzcd}
	\ell(\C) \ar[r,"\mathrm{can}"] \ar[d, dashed,"\psi^2"] & \ell(\C)\adjt \ar[d,"\psi^2"] \\
	\ell(\C) \ar[r,"\mathrm{can}"] & \ell(\C)\adjt
\end{tikzcd}\]
commutative, where we use $\tau_\C \colon \ku\adjt \stackrel{\simeq}{\to} \ell(\C)\adjt$ to define $\psi^2$.
\end{question}

One can show that the map $\psi^2$ (in both the real and the complex case) exists as a map of $\E_1$-algebras. In order to construct this, one can use that $\ell(\R)$ and $\ell(\C)$ are 2-locally the free $\E_1$-$H\Z$-algebra on a generator in degree 4 and 2, respectively, see also \cite[Corollary 4.2]{HLN}. Then, the map $\psi^2$ is constructed as to be an $H\Z$-algebra map at prime 2. At the time of writing, we do not know whether $\ell(\R)$ or $\ell(\C)$ are 2-locally $\E_\infty$-$H\Z$-algebras, and in addition, should this be the case, we do not know whether to expect a possible $\E_\infty$-map $\psi^2 \colon \ell(\R) \to \ell(\R)$ to be 2-locally a map of $\E_\infty$-$H\Z$-algebras.

\begin{Rmk}
A curious consequence of the existence of the $\E_1$-map $\psi^2 \colon \ell(\C) \to \ell(\C)$ is the following observation about formal groups. We recall that the formal group of $\ku$ is the multiplicative one, in particular $\ku$ has a coordinate given by $x+y+\beta_\C xy$. The map $\tau_\C \colon \ku \to \ell(\C)$ provides a coordinate of the formal group of $\ell(\C)$ which is then given by $x+y+2b_\C xy$, where $b_\C \in \L_2(\C)$ is the periodicity generator, since $\tau_\C(\beta_\C) = 2b_\C$. Postcomposition with powers of $\psi^2$ on $\ell(\C)$ gives another coordinate of the formal group of $\ell(\C)$ given by $x+y+2^kb_\C xy$, with $k\geq 1$. As any two coordinates of a formal group are connected by a (strict) isomorphism, we deduce that for $k\geq 1$, the formal group laws $x+y+2xy$ and $x+y+2^kxy$ are isomorphic over $\Z$.
\end{Rmk}

\section{Further remarks}

\subsection*{On maps between K-theory and L-theory}
In this subsection, we aim to analyse, similarly to \cite{LN} the possible integral maps between K- and L-theory.
Let us first consider the map $\ko \to \L(\R)$ and describe its effect on homotopy groups.
For this, and in general, it will be convenient to record the following result:

\begin{Lemma}
The transformation $\tau$ of \cref{ThmA} is compatible with the unique lax symmetric monoidal transformation $\tau$ of \cite[Theorem A]{LN} in the sense that there is a commutative diagram of lax symmetric monoidal functors 
\[ \begin{tikzcd} \k(-) \ar[r, "\tau"] \ar[d] & \L(-) \ar[d] \\ \k(-\otimes \C) \ar[r, "\tau"] & \L(-\otimes\C).\end{tikzcd} \]
\end{Lemma}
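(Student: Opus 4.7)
The plan is to invoke the uniqueness part of \cref{Thm:existence+uniqueness-of-tau} (or rather, the general uniqueness principle that underlies it) applied to the target functor $\L(-\otimes_\R\C)\colon \KK \to \Sp$. Concretely, I would argue that the two composites in the square both assemble into lax symmetric monoidal transformations between the \emph{same} pair of lax symmetric monoidal functors $\KK \to \Sp$, namely from $\k$ to $\L(-\otimes_\R\C)$, and then deduce from the uniqueness of such a transformation that they must agree.

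First, I would check that $\L(-\otimes_\R\C)\colon \KK \to \Sp$ is canonically lax symmetric monoidal: this is immediate since complexification $-\otimes_\R\C\colon \KK \to \KK_\C$ is symmetric monoidal (it is the unit of the base change adjunction in an appropriate $2$-category of symmetric monoidal $\infty$-categories), and $\L$ is lax symmetric monoidal both on $\KK$ and $\KK_\C$ by the discussion at the end of Section~2. In particular, using that restriction along the realification $\KK_\C \to \KK$ identifies $\L$ on complex $C^*$-algebras (a priori computed via the realification) with the functor considered in \cite{LN}, there is no ambiguity in what the bottom horizontal arrow of the square means.

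Second, I would verify that both composites are lax symmetric monoidal. The top-right composite is the composition of $\tau\colon \k \to \L$ (lax symmetric monoidal by \cref{Thm:existence+uniqueness-of-tau}) with the map $\L \to \L(-\otimes_\R\C)$ induced by the symmetric monoidal unit map $\id \to -\otimes_\R\C$ of functors $\KK \to \KK$. The left-bottom composite is the map $\k \to \k(-\otimes_\R\C)$ induced similarly, followed by $\tau\colon \k(-\otimes_\R\C) \to \L(-\otimes_\R\C)$, the complex version of Theorem A applied at $A\otimes_\R\C$, which is lax symmetric monoidal by \cite[Theorem A]{LN} (or by rerunning the argument of \cref{Thm:existence+uniqueness-of-tau} on $\KK_\C$). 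Both composites are therefore lax symmetric monoidal transformations $\k \to \L(-\otimes_\R\C)$ in $\Fun(\KK,\Sp)$.

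Finally, I would conclude by the uniqueness statement used in the proof of \cref{Thm:existence+uniqueness-of-tau}: the functor $\k \simeq \map_\KK(\R,-)$ is the monoidal unit for the Day convolution on $\Fun(\KK,\Sp)$, so any object in $\Alg(\Fun(\KK,\Sp))$ receives a unique map from $\k$, and in particular the space of lax symmetric monoidal transformations $\k \to \L(-\otimes_\R\C)$ is contractible. The two composites therefore coincide, proving commutativity of the diagram. The main (mild) subtlety is the first step: ensuring that the two potential meanings of $\L$ on complex $C^*$-algebras (the one used in \cite{LN} versus the restriction along realification of the real $\L$-theory functor from \cref{ThmA}) are identified as lax symmetric monoidal functors, but this is a direct consequence of the fact that $\L$ of an involutive ring depends only on the underlying $\Z\adjt$-algebra with involution, together with the monoidality properties of realification.
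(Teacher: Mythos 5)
Your proposal is correct and follows essentially the same route as the paper: observe that complexification $\KK \to \KK_\C$ is symmetric monoidal so that $\L(-\otimes_\R\C)$ is lax symmetric monoidal on $\KK$, note that both composites in the square are then lax symmetric monoidal transformations $\k \to \L(-\otimes_\R\C)$, and invoke initiality of $\k$ (the Day convolution unit) to conclude they agree. The extra care you take about the two possible meanings of $\L$ on complex $C^*$-algebras is a reasonable clarification but does not change the argument.
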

\begin{proof}
One observes that the complexification functor sending $A$ to $A_\C = A \otimes_\R \C$ from $C^*$-algebras to complex $C^*$-algebras descends to a symmetric monoidal functor
\[ (-)\otimes\C \colon \KK_\R \lto \KK.\]
Then both composites of the diagram in question are lax symmetric monoidal transformations from $\k(-) \to \L(-)$. Using again that $\k(-)$ is initial, there is (up to canonical equivalence) only one such transformation.
Spelling this out explicitly, we obtain for each $A \in \RAlg$ a commutative diagram
\[ \begin{tikzcd} \k(A) \ar[r] \ar[d] & \L(A) \ar[d] \\ \k(A_\C) \ar[r] & \L(A_\C) \end{tikzcd}\]
which is natural in $A$.
\end{proof}

\begin{example}\label[example]{effect on homotopy}
Applying this in the case $A=\R$, we in particular obtain a commutative diagram of $\E_\infty$-ring spectra given by 
\[ \begin{tikzcd} \ko \ar[r,"\tau"] \ar[d] & \L\R \ar[d] \\ \ku \ar[r,"\tau"] & \L\C \end{tikzcd} \]
and the map induced on homotopy rings of the lower horizontal map is given by 
\[ \Z[\beta] \lto \Z[b_\C] \]
sending $\beta$ to $2b_\C$, see \cite[Lemma 4.9]{LN}. Using this, we can again describe the map
$\ko \to \L\R$ on homotopy rings as follows: Firstly, recall that 
\[ \pi_*(\ko) = \Z[\eta,x,\beta_\R]/(\eta^3,2\eta,\eta x, x^2 = 4\beta_\R) \]
with $|\eta| = 1$, $|x| = 4$ and $|\beta_\R| = 8$. The map $\ko \to \ku$ vanishes on $\eta$, sends $\beta_\R$ to $\beta^4_\C$ and $x$ to $2\beta^2_\C$.
On homotopy, the map $\L\R \to \L\C$ identifies with the canonical inclusion
\[ \Z[b^2_\C] \subseteq \Z[b_\C] \]
as the subring generated by $b^2_\C$. We denote the element in $\L_4(\R)$ corresponding to $b_\C^2$ by $b$. It then follows that the map $\ko \to \L\R$ sends $x$ to $8b$ and $\beta_\R$ to $16b^2$. Notice that this is indeed compatible with the ring structure of $\pi_*(\ko)/\mathrm{torsion}$ and our general analysis as in \cref{Rem:induced-map}.
\end{example}

Just like in the complex case, the only possibility for an interesting integral map between K-theory and L-theory is the one just constructed. More precisely, the analog of \cite[Theorem E]{LN} in the real case holds as well:
\begin{Thm}\label[Thm]{thm}
We have that 
\[ [\KO,\L\R] = 0 = [\L\R,\KO] = [ \ell\R,\KO],\]
where the square brackets denote homotopy classes of maps of spectra.
\end{Thm}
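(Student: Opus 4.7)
Plan. We establish the three vanishing statements by adapting the argument for the complex analog given in \cite[Theorem E]{LN}. Two structural inputs are central. First, after inverting $2$, \cref{ThmA} yields equivalences $\ko\adjt \simeq \ell(\R)\adjt$ and $\KO\adjt \simeq \L(\R)\adjt$, so that the $[1/2]$-parts of the three groups in question all sit inside $\pi_0\mathrm{End}(\KO\adjt)$. Second, as noted at the end of the preceding section via \cite[Corollary 4.2]{HLN}, $\ell(\R)$ is $2$-locally the free $\E_1$-$H\Z$-algebra on a degree-$4$ generator; since each degree receives only one summand, this implies that $2$-locally $\ell(\R)$ splits as a wedge $\bigvee_{n \geq 0} \Sigma^{4n} H\Z$, and inverting the periodicity element $b$ yields $\L(\R)^\wedge_2 \simeq \bigvee_{n \in \Z} \Sigma^{4n} H\Z^\wedge_2$.

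Using the arithmetic fracture square at the prime $2$, each claim reduces to a $2$-adic computation together with a rational compatibility check. For $[\KO, \L(\R)]$, the wedge splitting identifies the $2$-local part with $\prod_n H^{-4n}(\KO; \Z^\wedge_2)$, which one analyses through the Postnikov tower of $\KO^\wedge_2$ and the classical calculation $H^*(\KO;\F_2) \cong A/\!/A(1)$. For $[\ell(\R), \KO]$, the wedge splitting reduces the $2$-local part to a product of groups of the form $\KO^{4n}(H\Z)$, computed via the Postnikov tower of $H\Z$ and the Adams spectral sequence. The case $[\L(\R), \KO]$ then follows from the colimit description $\L(\R) = \colim_n \Sigma^{-4n}\ell(\R)$ together with the associated Milnor $\lim^1$ sequence, which reduces it to the already-established vanishing of $[\ell(\R), \KO]$ up to control of the transition maps.

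The main obstacle is to verify, in each case, that the compatibility imposed by the fracture square forces vanishing: both the rational piece (a subgroup of $\pi_0\mathrm{End}(\KO\adjt)$) and the $2$-adic piece are individually nonzero, and one must show that no nonzero rational class arises from a genuine integral spectrum map. This hinges on the explicit identification of $\tau$ on homotopy groups given in \cref{effect on homotopy}: the relation $\tau(\beta_\R) = 16 b^2$ shows in particular that the periodicity generator of $\KO$ maps to a highly $2$-divisible class, producing a $2$-adic congruence that cannot be matched by any nonzero $\adjt$-class coming from an honest spectrum map, exactly as in the complex case treated in \cite[Theorem E]{LN}.
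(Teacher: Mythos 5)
The paper's proof is quite different: it invokes Anderson duality, using that $I_\Z(\L\R)\simeq\L\R$ and $I_\Z(\KO)\simeq\Sigma^4\KO$ together with the fact that $\KO\otimes\L\R$ is even with $2$ inverted (established by exhibiting it as a direct summand of $\KU\otimes\L\C$ via the Wood cofibre sequence), and then feeds these into the argument of \cite[Theorem E]{LN}. Your proposed route --- arithmetic fracture at $2$ combined with the $2$-local wedge decomposition of $\ell(\R)$ --- is a genuinely different strategy, but two of your key assertions are wrong, and they change the shape of the argument.

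First, $H^*(\KO;\F_2)\cong A/\!/A(1)$ is false: that is the mod-$2$ cohomology of the \emph{connective} spectrum $\ko$. The periodic spectrum $\KO$ is $K(1)$-local $2$-adically and hence $H\F_2$-acyclic, so $H^*(\KO;\F_2)=0$; it follows that $F(\KO,H\Z^\cwedge_2)=0$ (being $2$-complete with $2$ invertible). Second, and as a consequence, the $2$-complete fracture piece $[\KO,\L(\R)^\cwedge_2]\cong\prod_n H^{4n}(\KO;\Z^\cwedge_2)$ vanishes outright --- it is not ``individually nonzero'' as you assert, so your picture of a $2$-adic divisibility constraint (coming from $\tau(\beta_\R)=16b^2$) that a rational class fails to satisfy is not the operative mechanism. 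What the Mayer--Vietoris sequence of the fracture square actually gives is
\[
[\Sigma\KO,(\L\R^\cwedge_2)\adjt]\lto[\KO,\L\R]\lto[\KO,\L\R\adjt]\lto[\KO,(\L\R^\cwedge_2)\adjt],
\]
and one would still need to control the third term $[\KO,(\L\R^\cwedge_2)\adjt]$ (the rationalisation of an infinite product of shifted $H\Z^\cwedge_2$'s, which does not simplify by commuting $\adjt$ past the product) as well as the $\Sigma$-shifted term; your sketch does not engage with either. Similar gaps affect the other two cases: for $[\ell\R,\KO]$ the needed $2$-complete vanishing is $F(H\Z,\KO^\cwedge_2)=0$ (an Anderson--Hodgkin type fact, not a $\Postnikov$-tower computation over $A/\!/A(1)$), and for $[\L\R,\KO]$ the $\lim^1$ reduction you propose requires controlling the transition maps, which you leave unaddressed. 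The Anderson-duality argument in the paper avoids all of this fracture-square bookkeeping at once, which is why it is the cleaner route.

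(One small syntactic remark: I wrote $\Postnikov$ above as a placeholder; please read it simply as the word ``Postnikov''.)

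\newcommand{\Postnikov}{\text{Postnikov}}
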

\begin{proof}
The main ingredients in proving this result in the complex case are
\begin{enumerate}
\item[-] Both $\KU$ and $\L\C$ are Anderson self-dual,
\item[-] the map $\KU\otimes \L\C \to (\KU\otimes \L\C)\adjt$ is an equivalence, and
\item[-] the spectrum $\KU\otimes \L\C$ is even, i.e.\ has no odd homotopy groups.
\end{enumerate}
The analog of these results hold true for $\KO$ in place of $\KU$ and $\L\R$ in place of $\L\C$ because
\begin{enumerate}
\item[-] $I_\Z(\KO) \simeq \Sigma^4 \KO$, see \cite[Theorem 8.1]{HS} and $I_\Z(\L\R) \simeq \L\R$ simply because the homotopy groups of $I_\Z(\L\R)$ are again free of rank 1 over the homotopy groups of $\L\R$, just like for $\L\C$.
\item[-] to show that $2$ is invertible in $\KO\otimes \L\R$ it suffices to observe that $\KU \simeq \mathrm{cofib}(\eta\colon \Sigma\KO \to \KO)$, and hence for any spectrum $E$ in which $\eta$ is trivial (such as $\L\R$), we have 
\[ \KU \otimes E \simeq \KO\otimes E \oplus \Sigma^2\KO\otimes E.\]
It follows that $\KO \otimes \L\R$ is a direct summand in $\KU \otimes \L\R$ which is itself a direct summand of $\KU \otimes \L\C$, as $\L\R\oplus \Sigma^2 \L\R \simeq \L\C$. 
\item[-] We have just established that $\KO\otimes \L\R$ is a direct summand of $\KU\otimes \L\C$, so is even as well. 
\end{enumerate}

\end{proof}

\begin{Rmk}
We also remark that, as expected, $\ell(\R)$ is not a compact $\ko$-module\footnote{I.e.\ the functor $\map_\ko(\ell(\R),-)$ does not preserve filtered colimits. Equivalently, $\ell(\R)$ is not perfect, that is, it is not in the stable subcategory of $\ko$-modules generated by $\ko$ under finite colimits, shifts, and retracts.}, and likewise that $\ell(\C)$ is not a compact $\ku$-module. Indeed, it suffices to show the latter, as 
\[ \ku \otimes_\ko \ell(\R) \simeq \ell(\C) \]
so if $\ell(\C)$ is not compact over $\ku$, then $\ell(\R)$ is also not compact over $\ko$. To show this, we observe that $\ell(\C) \otimes_\ku \KU = \KU\adjt$ is obtained from $\ell(\C)$ by inverting $2b$ and $\L(\C)\adjt \simeq \KU\adjt$. Now, $\KU\adjt$ is not compact over $\KU$. Indeed, if it were compact we would have
\[ \KU\adjt \simeq \map_\KU(\KU\adjt,\KU\adjt) = \colim \map_{\KU}(\KU\adjt,\KU) \]
but this colimit is constant, as $2$ is already invertible on the mapping spectrum. The latter is therefore equivalent to $\lim \KU$ with transition maps given by the multiplication by $2$ map. But we have $\pi_0(\lim \KU) = 0$ by the Milnor sequence.
\end{Rmk}

\bibliographystyle{amsalpha}
\bibliography{mybib}

\newcommand{\etalchar}[1]{$^{#1}$}
\begin{thebibliography}{HLLRW21}

\bibitem[AHR10]{AHR}
M~Ando, M.~J. Hopkins, and C.~Rezk.
\newblock Multiplicative orientations of {$KO$}-theory and the spectrum of
  topological modular forms.
\newblock available at
  \href{https://faculty.math.illinois.edu/{~}mando/papers/koandtmf.pdf}{https://faculty.math.illinois.edu/{~}mando/papers/koandtmf.pdf},
  2010.

\bibitem[And69]{Anderson}
D.W. Anderson.
\newblock Universal coefficient theorems for {K}-theory.
\newblock mimeographed notes, 1969.

\bibitem[AS68]{MR236952}
M.~F. Atiyah and I.~M. Singer.
\newblock The index of elliptic operators. {III}.
\newblock {\em Ann. of Math. (2)}, 87:546--604, 1968.

\bibitem[BEL21a]{BEL}
U.~Bunke, A.~Engel, and M.~Land.
\newblock {A stable $\infty$-category for equivariant {KK}-theory}.
\newblock {\em arXiv:2102.13372}, 2021.

\bibitem[BEL21b]{BEL2}
U.~Bunke, A.~Engel, and M.~Land.
\newblock Paschke duality and assembly maps.
\newblock {\em arXiv:2107.02843}, 2021.

\bibitem[BH20]{BH}
T.~Bachmann and M.~J. Hopkins.
\newblock $\eta$-periodic motivic stable homotopy theory over fields.
\newblock {\em arXiv:2005.06778}, 2020.

\bibitem[BKS{\O}15]{BKSO}
A.~J. Berrick, M.~Karoubi, M.~Schlichting, and P.~A. {{\O}}stv{{\ae}}r.
\newblock The {H}omotopy {F}ixed {P}oint {T}heorem and the
  {Q}uillen-{L}ichtenbaum conjecture in {H}ermitian {$K$}-theory.
\newblock {\em Adv. Math.}, 278:34--55, 2015.

\bibitem[Bla98]{Blackadar}
B.~Blackadar.
\newblock {\em {$K$}-theory for operator algebras}, volume~5 of {\em
  Mathematical Sciences Research Institute Publications}.
\newblock Cambridge University Press, Cambridge, second edition, 1998.

\bibitem[Boe02]{Boersema1}
J.~L. Boersema.
\newblock Real {$C^*$}-algebras, united {$K$}-theory, and the {K}\"{u}nneth
  formula.
\newblock {\em $K$-Theory}, 26(4):345--402, 2002.

\bibitem[BRS11]{Boersema2}
J.~L. Boersema, E.~Ruiz, and P.~J. Stacey.
\newblock The classification of real purely infinite simple {${\rm
  C}^*$}-algebras.
\newblock {\em Doc. Math.}, 16:619--655, 2011.

\bibitem[CDH{\etalchar{+}}20a]{CDHI}
B.~Calm{\`e}s, E.~Dotto, Y.~Harpaz, F.~Hebestreit, M.~Land, K.~Moi, D.~Nardin,
  T.~Nikolaus, and W.~Steimle.
\newblock Hermitian {K}-theory for stable $\infty$-categories {I}:
  {F}oundations.
\newblock {\em arXiv:2009.07223}, 2020.

\bibitem[CDH{\etalchar{+}}20b]{CDHII}
B.~Calm{\`e}s, E.~Dotto, Y.~Harpaz, F.~Hebestreit, M.~Land, K.~Moi, D.~Nardin,
  T.~Nikolaus, and W.~Steimle.
\newblock Hermitian {K}-theory for stable $\infty$-categories {II}: {Cobordism
  categories and additivity}.
\newblock {\em arXiv:2009.07224}, 2020.

\bibitem[CDH{\etalchar{+}}20c]{CDHIII}
B.~Calm{\`e}s, E.~Dotto, Y.~Harpaz, F.~Hebestreit, M.~Land, K.~Moi, D.~Nardin,
  T.~Nikolaus, and W.~Steimle.
\newblock Hermitian {K}-theory for stable $\infty$-categories {III}:
  {Grothendieck--Witt groups of rings}.
\newblock {\em arXiv:2009.07225}, 2020.

\bibitem[CDH{\etalchar{+}}22]{CDHIV}
B.~Calm{\`e}s, E.~Dotto, Y.~Harpaz, F.~Hebestreit, M.~Land, K.~Moi, D.~Nardin,
  T.~Nikolaus, and W.~Steimle.
\newblock Hermitian {K}-theory for stable $\infty$-categories {IV}: {Poincar\'e
  motives}.
\newblock {\em in preparation}, 2022.

\bibitem[Con98]{Constantinescu}
C.~Constantinescu.
\newblock On real {$C^*$}-algebras.
\newblock volume~43, pages 105--111. 1998.
\newblock Collection of papers in memory of Martin Jurchescu.

\bibitem[Cun87]{Cuntz}
J.~Cuntz.
\newblock A new look at {$KK$}-theory.
\newblock {\em $K$-Theory}, 1(1):31--51, 1987.

\bibitem[DL98]{DL}
J.~F. Davis and W.~L\"{u}ck.
\newblock Spaces over a category and assembly maps in isomorphism conjectures
  in {$K$}- and {$L$}-theory.
\newblock {\em $K$-Theory}, 15(3):201--252, 1998.

\bibitem[Goo82]{Goodearl}
K.~R. Goodearl.
\newblock {\em Notes on real and complex {$C^{\ast} $}-algebras}, volume~5 of
  {\em Shiva Mathematics Series}.
\newblock Shiva Publishing Ltd., Nantwich, 1982.

\bibitem[Hig87]{Higson}
N.~Higson.
\newblock A characterization of {$KK$}-theory.
\newblock {\em Pacific J. Math.}, 126(2):253--276, 1987.

\bibitem[HKO11]{HKO}
P.~Hu, I.~Kriz, and K.~Ormsby.
\newblock The homotopy limit problem for {H}ermitian {K}-theory, equivariant
  motivic homotopy theory and motivic {R}eal cobordism.
\newblock {\em Adv. Math.}, 228(1):434--480, 2011.

\bibitem[HLLRW21]{HLLRW}
F.~Hebestreit, M.~Land, W.~L\"{u}ck, and O.~Randal-Williams.
\newblock A vanishing theorem for tautological classes of aspherical manifolds.
\newblock {\em Geom. Topol.}, 25(1):47--110, 2021.

\bibitem[HLN21]{HLN}
F.~Hebestreit, M.~Land, and T.~Nikolaus.
\newblock On the homotopy type of {L}-spectra of the integers.
\newblock {\em J. Topol.}, 14(1):183--214, 2021.

\bibitem[HNS22]{HNS}
Y.~Harpaz, T.~Nikolaus, and J.~Shah.
\newblock {Real topological cyclic homology and normal L-theory}.
\newblock in preparation, 2022.

\bibitem[HR05a]{HR1}
N.~Higson and J.~Roe.
\newblock Mapping surgery to analysis. {I}. {A}nalytic signatures.
\newblock {\em $K$-Theory}, 33(4):277--299, 2005.

\bibitem[HR05b]{HR2}
N.~Higson and J.~Roe.
\newblock Mapping surgery to analysis. {II}. {G}eometric signatures.
\newblock {\em $K$-Theory}, 33(4):301--324, 2005.

\bibitem[HR05c]{HR3}
N.~Higson and J.~Roe.
\newblock Mapping surgery to analysis. {III}. {E}xact sequences.
\newblock {\em $K$-Theory}, 33(4):325--346, 2005.

\bibitem[HS14]{HS}
D.~Heard and V.~Stojanoska.
\newblock {$K$}-theory, reality, and duality.
\newblock {\em J. K-Theory}, 14(3):526--555, 2014.

\bibitem[Joa03]{Joachim2}
M.~Joachim.
\newblock {$K$}-homology of {$C^\ast$}-categories and symmetric spectra
  representing {$K$}-homology.
\newblock {\em Math. Ann.}, 327(4):641--670, 2003.

\bibitem[Joa04]{Joachim1}
M.~Joachim.
\newblock Higher coherences for equivariant {$K$}-theory.
\newblock In {\em Structured ring spectra}, volume 315 of {\em London Math.
  Soc. Lecture Note Ser.}, pages 87--114. Cambridge Univ. Press, Cambridge,
  2004.

\bibitem[Kar80]{Karoubi}
M.~Karoubi.
\newblock Th{\'e}orie de {Q}uillen et homologie du groupe orthogonal.
\newblock {\em Ann. of Math. (2)}, 112(2):207--257, 1980.

\bibitem[Kas88]{Kasparov}
G.~G. Kasparov.
\newblock Equivariant {$KK$}-theory and the {N}ovikov conjecture.
\newblock {\em Invent. Math.}, 91(1):147--201, 1988.

\bibitem[KLL21]{KLL}
D.~Kasprowski, K.~Li, and W.~L\"{u}ck.
\newblock {$K$}- and {$L$}-theory of graph products of groups.
\newblock {\em Groups Geom. Dyn.}, 15(1):269--311, 2021.

\bibitem[Kra21]{Kranz}
J.~Kranz.
\newblock An identification of the {B}aum-{C}onnes and {D}avis-{L}\"{u}ck
  assembly maps.
\newblock {\em M\"{u}nster J. Math.}, 14(2):509--536, 2021.

\bibitem[KSW16]{KSW}
M.~Karoubi, M.~Schlichting, and C.~Weibel.
\newblock The {W}itt group of real algebraic varieties.
\newblock {\em J. Topol.}, 9(4):1257--1302, 2016.

\bibitem[Lan15]{Land}
M.~Land.
\newblock The analytical assembly map and index theory.
\newblock {\em J. Noncommut. Geom.}, 9(2):603--619, 2015.

\bibitem[Li03]{Li}
B.~Li.
\newblock {\em Real operator algebras}.
\newblock World Scientific Publishing Co., Inc., River Edge, NJ, 2003.

\bibitem[Liu63]{Liulevicius}
A.~Liulevicius.
\newblock A theorem in homological algebra and stable homotopy of projective
  spaces.
\newblock {\em Trans. Amer. Math. Soc.}, 109:540--552, 1963.

\bibitem[LM89]{LM}
H.~B. Lawson, Jr. and M.-L. Michelsohn.
\newblock {\em Spin geometry}, volume~38 of {\em Princeton Mathematical
  Series}.
\newblock Princeton University Press, Princeton, NJ, 1989.

\bibitem[LM14]{LMcC1}
G.~Laures and J.~E. McClure.
\newblock Multiplicative properties of {Q}uinn spectra.
\newblock {\em Forum Math.}, 26(4):1117--1185, 2014.

\bibitem[LM21]{LMcC2}
G.~Laures and J.~E. McClure.
\newblock {Commutativity properties of Quinn spectra}.
\newblock {\em arXiv:1304.4759}, 2013 (v2 2021).

\bibitem[LMMT20]{LMMT}
M.~Land, A.~Mathew, L.~Meier, and G.~Tamme.
\newblock Purity in chromatic localizations of algebraic ${K}$-theory.
\newblock {\em arXiv:2001.10425}, 2020.

\bibitem[LN18]{LN}
M.~Land and T.~Nikolaus.
\newblock {On the relation between K- and L-theory of $C^*$-algebras}.
\newblock {\em Math. Ann.}, 371:517--563, 2018.

\bibitem[LR05]{Lueck-assembly}
W.~L\"{u}ck and H.~Reich.
\newblock The {B}aum-{C}onnes and the {F}arrell-{J}ones conjectures in {$K$}-
  and {$L$}-theory.
\newblock In {\em Handbook of {$K$}-theory. {V}ol. 1, 2}, pages 703--842.
  Springer, Berlin, 2005.

\bibitem[Mat15]{AkhilWood}
A.~Mathew.
\newblock The homology of {$\mathrm{tmf}$, arXiv v4}.
\newblock https://arxiv.org/abs/1305.6100v4, 2015.

\bibitem[Mat16]{AkhilnoWood}
A.~Mathew.
\newblock The homology of {$\mathrm{tmf}$}.
\newblock {\em Homology Homotopy Appl.}, 18(2):1--29, 2016.

\bibitem[May77]{MR0494077}
J.~Peter May.
\newblock {\em {$E_{\infty }$} ring spaces and {$E_{\infty }$} ring spectra}.
\newblock Lecture Notes in Mathematics, Vol. 577. Springer-Verlag, Berlin-New
  York, 1977.
\newblock With contributions by Frank Quinn, Nigel Ray, and J\o rgen Tornehave.

\bibitem[Mil98]{Miller}
J.~G. Miller.
\newblock Signature operators and surgery groups over {$C^*$}-algebras.
\newblock {\em $K$-Theory}, 13(4):363--402, 1998.

\bibitem[MM79]{MR548575}
Ib~Madsen and R.~James Milgram.
\newblock {\em The classifying spaces for surgery and cobordism of manifolds}.
\newblock Annals of Mathematics Studies, No. 92. Princeton University Press,
  Princeton, N.J.; University of Tokyo Press, Tokyo, 1979.

\bibitem[MR90]{MR}
R.~J. Milgram and A.~A. Ranicki.
\newblock The {$L$}-theory of {L}aurent extensions and genus {$0$} function
  fields.
\newblock {\em J. Reine Angew. Math.}, 406:121--166, 1990.

\bibitem[Nik16]{Nikolaus}
T.~Nikolaus.
\newblock Stable infinity {O}perads and the multiplicative {Y}oneda lemma.
\newblock {\em arXiv:1608.02901}, 2016.

\bibitem[NS19]{Sprang}
N.~Naumann and J.~Sprang.
\newblock Simultaneous {K}ummer congruences and {$\Bbb E_\infty$}-orientations
  of {KO} and tmf.
\newblock {\em Math. Z.}, 292(1-2):151--181, 2019.

\bibitem[Pow75]{Powers}
R.~T. Powers.
\newblock Simplicity of the {$C^{\ast} $}-algebra associated with the free
  group on two generators.
\newblock {\em Duke Math. J.}, 42:151--156, 1975.

\bibitem[PS16]{PS}
P.~Piazza and T.~Schick.
\newblock The surgery exact sequence, {K}-theory and the signature operator.
\newblock {\em Ann. K-Theory}, 1(2):109--154, 2016.

\bibitem[Ran80]{Ranicki}
A.~A. Ranicki.
\newblock The algebraic theory of surgery. {I}. {F}oundations.
\newblock {\em Proc. London Math. Soc. (3)}, 40(1):87--192, 1980.

\bibitem[Ran81]{Ranickiyellow}
A.~A. Ranicki.
\newblock {\em Exact sequences in the algebraic theory of surgery}, volume~26
  of {\em Mathematical Notes}.
\newblock Princeton University Press, Princeton, N.J.; University of Tokyo
  Press, Tokyo, 1981.

\bibitem[Ran92]{Ranickiblue}
A.~A. Ranicki.
\newblock {\em Algebraic {$L$}-theory and topological manifolds}, volume 102 of
  {\em Cambridge Tracts in Mathematics}.
\newblock Cambridge University Press, Cambridge, 1992.

\bibitem[Ros95]{Rosenberg2}
J.~Rosenberg.
\newblock Analytic {N}ovikov for topologists.
\newblock In {\em Novikov conjectures, index theorems and rigidity, {V}ol.\ 1
  ({O}berwolfach, 1993)}, volume 226 of {\em London Math. Soc. Lecture Note
  Ser.}, pages 338--372. Cambridge Univ. Press, Cambridge, 1995.

\bibitem[Ros05]{Rosenberg}
J.~Rosenberg.
\newblock Comparison between algebraic and topological {$K$}-theory for
  {B}anach algebras and {$C^*$}-algebras.
\newblock In {\em Handbook of {$K$}-theory. {V}ol. 1, 2}, pages 843--874.
  Springer, Berlin, 2005.

\bibitem[RW06]{RW}
J.~Rosenberg and S.~Weinberger.
\newblock The signature operator at 2.
\newblock {\em Topology}, 45(1):47--63, 2006.

\bibitem[Sch93]{Schroeder}
H.~Schr\"{o}der.
\newblock {\em {$K$}-theory for real {$C^*$}-algebras and applications}, volume
  290 of {\em Pitman Research Notes in Mathematics Series}.
\newblock Longman Scientific \& Technical, Harlow; copublished in the United
  States with John Wiley \& Sons, Inc., New York, 1993.

\bibitem[Sch04]{Schick}
T.~Schick.
\newblock Real versus complex {$K$}-theory using {K}asparov's bivariant
  {$KK$}-theory.
\newblock {\em Algebr. Geom. Topol.}, 4:333--346, 2004.

\bibitem[Sch17]{Schlichting}
M.~Schlichting.
\newblock Hermitian {$K$}-theory, derived equivalences and {K}aroubi's
  fundamental theorem.
\newblock {\em J. Pure Appl. Algebra}, 221(7):1729--1844, 2017.

\bibitem[Tak02]{Takesaki1}
M.~Takesaki.
\newblock {\em Theory of operator algebras. {I}}, volume 124 of {\em
  Encyclopaedia of Mathematical Sciences}.
\newblock Springer-Verlag, Berlin, 2002.
\newblock Reprint of the first (1979) edition, Operator Algebras and
  Non-commutative Geometry, 5.

\bibitem[Tak03a]{Takesaki2}
M.~Takesaki.
\newblock {\em Theory of operator algebras. {II}}, volume 125 of {\em
  Encyclopaedia of Mathematical Sciences}.
\newblock Springer-Verlag, Berlin, 2003.
\newblock Operator Algebras and Non-commutative Geometry, 6.

\bibitem[Tak03b]{Takesaki3}
M.~Takesaki.
\newblock {\em Theory of operator algebras. {III}}, volume 127 of {\em
  Encyclopaedia of Mathematical Sciences}.
\newblock Springer-Verlag, Berlin, 2003.
\newblock Operator Algebras and Non-commutative Geometry, 8.

\bibitem[Wah13]{Wahl}
C.~Wahl.
\newblock Higher {$\rho$}-invariants and the surgery structure set.
\newblock {\em J. Topol.}, 6(1):154--192, 2013.

\bibitem[Wil15]{Wilson}
D.~Wilson.
\newblock {Orientations and Topological Modular Forms with Level Structure}.
\newblock {\em ArXiv:1507.05116}, 2015.

\end{thebibliography}

\end{document}